\DeclareMathAlphabet{\pazocal}{OMS}{zplm}{m}{n}
\pgfplotsset{compat=1.14}
\def\N{\mathbb{N}} 
\newtheorem{defi}{Definition}[section]
\newtheorem{remark}[defi]{Remark}
\newtheorem{thm}[defi]{Theorem}
\newtheorem{lemma}[defi]{Lemma}
\newtheorem{corollary}[defi]{Corollary}
\newtheorem{notation}[defi]{Notation}
\newtheorem{prop}[defi]{Proposition}
\newtheorem{question}[defi]{Question}
\newcommand{\dbl}{[\hspace{-0.2ex}[}
\newcommand{\dbr}{]\hspace{-0.2ex}]}
\newcommand{\db}[1]{\dbl {#1} \dbr}
\newcommand{\iso}{\cong}
\newcommand{\invlim}{\underleftarrow{\textnormal{lim}}\,}
\newcommand{\onto}{\twoheadrightarrow}
\newcommand{\Hom}{\textnormal{Hom}}
\newcommand{\tn}[1]{\textnormal{#1}}
\newcommand{\Br}{\textnormal{Br}}
\newcommand{\Ker}{\textnormal{Ker}}
\newcommand{\Aut}{\textnormal{Aut}}
\title{Block pro-fusion systems for profinite groups and blocks with infinite dihedral defect groups}
\author{Florian Eisele, Ricardo J.\ Franquiz Flores and John W.\ MacQuarrie}
\date{}
\begin{document}
	
	\maketitle
	\begin{abstract}
		We introduce block pro-fusion systems for blocks of profinite groups, prove a profinite version of Puig's structure theorem for nilpotent blocks, and use it to show that there is only one Morita equivalence class of blocks having the infinite dihedral pro-$2$ group as their defect group.
	\end{abstract}
	
    \section{Introduction}

    Classical block theory is an approach to the modular representation theory of finite groups.  If $k$ is an algebraically closed field of characteristic $p>0$ and $G$ is a finite group, one simply writes $kG$ as a product of indecomposable algebras -- the blocks -- and studies the representation theory of each block separately.  To each block $B$ is attached a $p$-subgroup $D$ of $G$, the defect group of $B$  (unique up to conjugation), which contains profound information about the representation theory of $B$.  Most blocks have wild representation type, but blocks whose defect group is cyclic have finite type and blocks whose defect group is dihedral, semi-dihedral or generalized quaternion have tame representation type. Further structural information is contained in the block fusion system of $B$, which is a fusion system on $D$. For instance, in the aforementioned tame cases, it determines the number of simple modules.   
    
    Long, complex projects due to Brauer, Dade, Green and others (in the finite type case) and Brauer, Donovan, Erdmann and others (in the tame cases) have resulted in classifications of the blocks of finite groups with finite or tame representation type. Going in a different direction, Puig developed a theory of nilpotent blocks, that is, blocks whose block fusion system is as small as it can be, which fully describes their structure.

    A block theory for profinite groups is in development, beginning with \cite{FranquizMacQuarrieBrauer}, where defect groups are defined and characterized, and Brauer's First Main theorem is proved.  In \cite{JRcyclicdef}, the blocks of profinite groups having (pro)cyclic defect group are classified using purely algebra theoretic methods: the classification of blocks with finite cyclic defect groups is invoked, and the (very few) possible inverse limits of such blocks are calculated using the classification. 
    The present paper is a contribution both to the general theory of blocks of profinite groups and the concrete problem of classifying blocks with certain defect groups. 
    
    As far as general theory is concerned, the aim of this paper is to construct a profinite version of block fusion systems, based on the definition of pro-fusion systems by Stancu and Symonds \cite{StancuSymonds}. This is not straight-forward since block fusion systems for blocks of finite groups do not fit into inverse systems in an obvious way. We manage to resolve this for blocks of countably based profinite groups, and obtain a natural definition of a \emph{block pro-fusion system} $\mathcal F_{(D,\hat e)}(G,b)$ via a suitable generalization Brauer pairs (see Definition~\ref{def via brauer pairs}). The relationship between $\mathcal F_{(D,\hat e)}(G,b)$ and the (ordinary) block fusion systems of the finite-dimensional blocks of which $k\db{G}b$ is the inverse limit, is rather intricate. On the one hand, $\mathcal F_{(D,\hat e)}(G,b)$ is an inverse limit of suitable quotients of finite block fusion systems arising from quotients of $k\db{G}b$ (see Theorem~\ref{thm fusion as inverse limit}). On the other hand the block fusion systems of the finite quotients of $k\db{G}b$ embed into suitable quotients of $\mathcal F_{(D,\hat e)}(G,b)$ (see Proposition~\ref{prop fusion embedding}). We can define nilpotent blocks of profinite groups in analogy with the finite case, and Puig's structure theorem still holds assuming the defect group is finitely generated as a pro-$p$ group. Of course the utility of fusion systems in the block theory of finite groups is not limited to this theorem, but other applications in the profinite case are beyond the scope of the present article.

    \begin{thm}[see Theorem~\ref{thm puig in body}]\label{thm puig}
        Let $G$ be a profinite group and let $B$ be a nilpotent block of $k\db{G}$ with topologically finitely generated defect group $D$. Then $B$ is Morita equivalent to $k\db{D}$.
    \end{thm}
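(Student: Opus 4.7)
My plan is to reduce the profinite Puig theorem to its classical finite counterpart by realising $B$ as an inverse limit of finite nilpotent blocks and assembling a Morita equivalence in the limit.

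Because $D$ is topologically finitely generated, I would choose a cofinal system of open normal subgroups $N_i \trianglelefteq G$ such that the $D_i := DN_i/N_i$ form a cofinal chain of finite quotients of $D$, and write $B = \varprojlim_i B_i$, where each $B_i$ is a block of $k[G/N_i]$ with defect group $D_i$. The first step is to transfer nilpotency from $B$ to each $B_i$. Nilpotency of $B$ means $\mathcal F_{(D,\hat e)}(G,b) = \mathcal F_D(D)$; Proposition~\ref{prop fusion embedding} embeds each ordinary block fusion system $\mathcal F_{D_i}(G/N_i, \bar b_i)$ into an appropriate quotient of $\mathcal F_{(D,\hat e)}(G,b)$, and every such quotient is a trivial fusion system on a quotient of $D$, so each $B_i$ is itself nilpotent.

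Applying the classical Puig theorem to each $B_i$ furnishes a source idempotent $j_i \in B_i^{D_i}$ and an isomorphism of interior $D_i$-algebras $j_i B_i j_i \cong \End_k(V_i) \otimes_k k D_i$ for an indecomposable endopermutation $k D_i$-module $V_i$. Using the generalized Brauer pair framework of Definition~\ref{def via brauer pairs} I would arrange the $j_i$ to be compatible, so that they assemble to a source idempotent $j \in B^D$ with $jBj = \varprojlim (\End_k(V_i) \otimes_k k D_i)$. The Brauer quotient identifies $\End_k(V_i)$ with $(j_i B_i j_i)/\ker \Br_{D_i}$, which is natural in the inverse system; since a surjection of finite matrix algebras is either zero or an isomorphism, the dimensions $\dim V_i$ stabilize, and after passing to a cofinal subsystem one may take the $V_i$ to form a compatible family with $\End_k(V_i) \cong \End_k(V_{i+1})$ throughout. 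Choosing a primitive idempotent $f$ in this common matrix algebra and viewing it inside $jBj$ via the tensor decomposition yields $f jBj f \cong \varprojlim k D_i = k\db{D}$, and the bimodule $Bjf$ is then shown to be a progenerator realising the required Morita equivalence.

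The main obstacle is coherent assembly: Puig's classical isomorphism is unique only up to inner automorphisms of the source algebra, so one must verify that the identifications $j_i B_i j_i \cong \End_k(V_i) \otimes_k k D_i$ can be chosen compatibly with the transition maps of the inverse system. This compatibility is where the naturality of the Brauer quotient and the topological finite generation of $D$ do the real work: the latter keeps the inverse system countable and the matrix factors $\End_k(V_i)$ bounded in dimension, without which the limit could fail to be a pseudocompact algebra Morita equivalent to $k\db{D}$.
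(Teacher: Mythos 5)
Your opening move coincides with the paper's: use Proposition~\ref{prop fusion embedding} to write $B$ as an inverse limit of finite nilpotent blocks $B_i$ with defect groups $D_i = DN_i/N_i$, each of which is nilpotent because its fusion system embeds in a quotient of the trivial pro-fusion system. After that, however, you take a route through source algebras that contains a genuine gap. The entire difficulty of the theorem is concentrated in your sentence ``I would arrange the $j_i$ to be compatible'': Puig's isomorphism $j_iB_ij_i\cong \End_k(V_i)\otimes_k kD_i$ involves non-canonical choices, the transition maps of the inverse system are not maps of interior $D_i$-algebras in any controlled sense, and nothing in Definition~\ref{def via brauer pairs} produces a coherent family of source idempotents. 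Without that coherence your final step, $fjBjf\cong\varprojlim kD_i = k\db{D}$, does not follow: an inverse limit of algebras each merely \emph{abstractly} isomorphic to $kD_i$, along surjections you do not control, could a priori be something else entirely. Some intermediate claims are also shaky: the Brauer quotient of $\End_k(V_i)\otimes_k kD_i$ at $D_i$ is not $\End_k(V_i)$, and the boundedness of $\dim V_i$ is asserted rather than derived.

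The paper sidesteps all of this. It passes to basic algebras: Proposition~\ref{prop inverse limit basic} gives a \emph{surjective} inverse system of basic algebras $A_{N_i}$ with limit the basic algebra $A$ of $B$, and the Morita-equivalence form of Puig's theorem identifies each $A_{N_i}$ with $kD_i$ --- no source idempotents or endopermutation modules are needed. Topological finite generation of $D$ bounds $\dim A/J^2(A)$ by $1+|D/\Phi(D)|$, so everything can be presented over a single finite quiver $Q$ (a bouquet of loops mapping onto $1-g_1,\ldots,1-g_n$), with $A_{N_i}\cong k\db{Q}/I_i$ and $k\db{D}\cong k\db{Q}/\bigcap_i I_i$. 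Proposition~\ref{prop unique limit} then says that the inverse limit of the $k\db{Q}/I_i$ is $k\db{Q}/\bigcap_i I_i$ \emph{regardless of what the surjective transition maps are}. That proposition is precisely the device that renders your compatibility problem moot; to salvage your argument you would have to either prove the coherence of the source-algebra identifications directly (which looks hard) or import an analogue of Proposition~\ref{prop unique limit}, at which point the source-algebra machinery becomes superfluous.
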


    We then apply this to the ``pro-tame'' case, which was in fact our original motivation.  There is only one infinite profinite group that is the inverse limit of defect groups of tame blocks of finite groups: namely the infinite pro-$2$ dihedral group $D_{2^{\infty}}$.  Using the classification of blocks of tame type and extending the methods applied in \cite{JRcyclicdef}, one can show that there are at most three Morita equivalence classes of algebras which potentially contain blocks with defect group $D_{2^{\infty}}$.  We present these algebras in Section \ref{section limits of tame blocks}, as they are interesting in their own right.  However, we were not able to decide using these methods which of the three algebras appear as basic algebras of blocks.  Our main theorem is considerably stronger, and follows immediately from Theorem~\ref{thm puig} and the fact that $D_{2^\infty}$ does not support any non-trivial pro-fusion systems:

    \begin{thm}[see Corollary~\ref{Corollary Dinfty block is kDinfty in body}]\label{Theorem Dinfty block is kDinfty}
        If $B$ is a block of a profinite group whose defect group is the infinite pro-$2$ group $D_{2^{\infty}}$, then $B$ is Morita equivalent to $k\db{D_{2^{\infty}}}$.
    \end{thm}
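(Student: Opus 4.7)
The plan is to reduce the statement to Theorem~\ref{thm puig}, whose hypotheses are that the defect group be topologically finitely generated and that the block be nilpotent. The first hypothesis holds trivially for $D_{2^\infty}$, which is topologically $2$-generated by a topological generator $r$ of its $\mathbb{Z}_2$-rotation subgroup together with any reflection $s$ satisfying $s^2=1$ and $srs=r^{-1}$.

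The substantive task is to verify nilpotency, i.e.\ that the block pro-fusion system $\mathcal{F}_{(D_{2^\infty},\hat e)}(G,b)$ coincides with the trivial pro-fusion system $\mathcal{F}_{D_{2^\infty}}(D_{2^\infty})$. Since the former is in particular a pro-fusion system on $D_{2^\infty}$, it suffices to prove the purely group-theoretic fact that every pro-fusion system on $D_{2^\infty}$ is trivial. The ingredients I would use are: (i) the classification of closed subgroups of $D_{2^\infty}$, namely the trivial subgroup, the order-$2$ subgroups generated by reflections $r^i s$ (splitting into two $D_{2^\infty}$-conjugacy classes according to the parity of $i$), the closed cyclic subgroups $2^n\mathbb{Z}_2$, and the closed dihedral subgroups; (ii) the observation that $\text{Out}(D_{2^\infty})$ is isomorphic to a pro-$2$ group (explicitly, $\mathbb{Z}_2^\times/\{\pm 1\} \times \mathbb{Z}_2/2\mathbb{Z}_2 \cong \mathbb{Z}_2 \times C_2$), so that the saturation axiom of a pro-fusion system immediately forces $\text{Aut}_{\mathcal{F}}(D_{2^\infty}) = \text{Inn}(D_{2^\infty})$; (iii) an analogous observation for the closed cyclic subgroups, exploiting that they are characteristic inside the dihedral subgroups containing them, to eliminate non-inner fusion on them.

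The main obstacle is ruling out fusion between the two conjugacy classes of reflections. On each finite quotient $D_{2^n}$ there do exist non-trivial fusion systems that fuse these two classes (for instance, those arising from $PSL_2(q)$ for appropriate $q$, or from $A_7$), so the obstruction cannot be extracted at any single finite level and must instead come from the inverse-limit structure. The strategy is to show that any coherent family of isomorphisms $\langle r^{2k}s\rangle \to \langle r^{2l+1}s\rangle$ at the various finite quotients must, by the pro-fusion extension axiom applied to the normalizers $N_{D_{2^\infty}}(\langle r^i s\rangle)$, be induced by a conjugating element of $D_{2^\infty}$ itself --- but there is no such element sending an even reflection to an odd one. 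Combining this with (i)--(iii) gives triviality of every pro-fusion system on $D_{2^\infty}$, whence $B$ is nilpotent and Theorem~\ref{thm puig} yields $B \sim_{\text{Morita}} k\db{D_{2^\infty}}$.
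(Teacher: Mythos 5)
Your global strategy is the same as the paper's: reduce to Theorem~\ref{thm puig} (the finite topological generation of $D_{2^\infty}$ is indeed immediate) by showing that every pro-saturated pro-fusion system on $D_{2^\infty}$ is trivial. But your proof of that last proposition has a genuine gap. You propose to rule out fusion of the two reflection classes by ``the pro-fusion extension axiom applied to the normalizers $N_{D_{2^\infty}}(\langle r^i s\rangle)$''. This fails for two reasons. First, pro-fusion systems in the sense of Stancu--Symonds are not characterized by a list of axioms (the paper stresses this): pro-saturation means only that the category is an inverse limit of saturated fusion systems on finite quotients, so any argument must ultimately be run at the finite levels of such an inverse system rather than by quoting saturation/extension axioms for the limit category. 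Second, and more concretely, even granting an extension axiom, it is vacuous exactly where you want to use it: since the rotation subgroup $\overline{\langle a\rangle}\cong\Z_2$ is torsion-free, $C_{D_{2^\infty}}(r^is)=\langle r^is\rangle$ and hence $N_{D_{2^\infty}}(\langle r^is\rangle)=\langle r^is\rangle$, so there is nothing to extend to. Your items (ii) and (iii) are fine but do not touch the real issue, because by Alperin's fusion theorem the nontrivial fusion on $D_{2^m}$ is generated by odd-order automorphisms of the \emph{essential} subgroups, which are the two Klein four subgroups --- subgroups your sketch never engages with and which simply have no analogue among the closed subgroups of $D_{2^\infty}$.

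The paper's actual mechanism is the following, and it is the missing idea: write $\mathcal F=\varprojlim_i\mathcal F_i$ with the $\mathcal F_i$ saturated on finite quotients $D_{2^{n_i}}$ and, by \cite[Lemma 4.2]{StancuSymonds}, assume the inverse system of fusion systems is \emph{surjective}. For $m\geq 3$ the only subgroups $P\leq D_{2^m}$ with $\Aut(P)$ not a $2$-group are the two Klein four subgroups $V_{m,s}$, but their preimages $W_{m,s}$ in $D_{2^n}$ ($n>m$) are dihedral of order $2^{n-m+2}\geq 8$, whose full automorphism groups are $2$-groups. Surjectivity of the system forces $\Aut_{\mathcal F_j}(V_{m,s})$ to be a quotient of the $2$-group $\Aut_{\mathcal F_i}(W_{m,s})$, hence a $2$-group, and then Alperin's fusion theorem (in the form of \cite[Corollary 8.2.9]{LinckelmannVolII}) gives $\mathcal F_j=\mathcal F_{D_{2^m}}(D_{2^m})$ for all large $j$, whence $\mathcal F$ is trivial. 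Your observation that no single finite level can see the obstruction is correct and is exactly why the preimage/surjectivity argument across two levels is needed; to repair your proof you should replace the appeal to extension axioms at reflection subgroups by this two-level analysis of the Klein four subgroups.
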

    
    A result of the third author and Symonds \cite{MacQSymInPrep} says that a block of a profinite group $G$ with finite defect group is necessarily finite-dimensional, and is hence a block for some finite quotient of $G$.  Thus Theorem \ref{Theorem Dinfty block is kDinfty}, together with the known results for finite groups, yields a classification of all the blocks of a profinite group having finite or infinite dihedral defect group.  Of the auxiliary results collected in Section \ref{section prelims}, Proposition \ref{prop unique limit} may be interesting in its own right: it says that a bounded completed path algebra of a finite quiver is determined up to isomorphism by its continuous finite-dimensional quotients.

	
	
	\section{Preliminaries}\label{section prelims}
	
	\subsection{The pro-$2$ group $D_{2^{\infty}}$}

 The finite dihedral $2$-groups $D_{2^n} = \langle a,b\,|\,a^{2^n}, b^2, baba\rangle$ form an inverse system in the obvious way as $n$ varies, with inverse limit the infinite dihedral pro-$2$ group $D_{2^{\infty}}$.
 

 \begin{prop}\label{prop dihedral only invlim of tames}
     The only infinite inverse limit of a surjective inverse system of finite dihedral, semi-dihedral or generalized quaternion $2$-groups is $D_{2^{\infty}}$. 
 \end{prop}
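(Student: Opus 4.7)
The plan is to identify a procyclic closed subgroup of index two in $G := \varprojlim G_i$ and show that the extension structure forces $G \cong D_{2^\infty}$. Since $G$ is infinite, the orders $|G_i|$ are unbounded, so after passing to a cofinal subsystem I may assume $|G_i| \geq 16$ for every $i$. Then each $G_i$ has a unique cyclic subgroup $C_i$ of index two, regardless of whether $G_i$ is dihedral, semi-dihedral or generalized quaternion.

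The central ingredient is a uniform classification of the normal subgroups of $G_i$: they are $G_i$, the three maximal subgroups of index two, and the subgroups of $C_i$. This is verified in each of the three families by direct computation: $[G_i, G_i] = \langle a^2 \rangle$ lies in $C_i$, and any normal subgroup $N$ not contained in $C_i$ must contain $\langle a^2 \rangle$ (by taking commutators of an element of $N \setminus C_i$ with $a$); since $G_i / \langle a^2 \rangle \cong C_2 \times C_2$ has exactly three proper nontrivial subgroups, $N$ is forced to be one of the three maximal subgroups. Now for any surjection $\phi_{ji} : G_j \to G_i$ in the system, the kernel has index at least $16$ and is therefore not a maximal subgroup, so by the classification it lies in $C_j$. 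Then $\phi_{ji}(C_j)$ is cyclic of index dividing two in $G_i$; nonabelianness of $G_i$ rules out the whole group, forcing $\phi_{ji}(C_j) = C_i$ and $\phi_{ji}^{-1}(C_i) = C_j$ by index considerations.

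Consequently the $C_i$ form an inverse subsystem with surjective transition maps, and $C := \varprojlim C_i$ is a closed subgroup of $G$ of index two. Since $G$ is infinite, so is $C$, and being a procyclic pro-$2$ group it must be isomorphic to $\mathbb{Z}_2$. Pick $b \in G \setminus C$: conjugation by $b$ is a continuous order-two automorphism of $\mathbb{Z}_2$ and so is either trivial or inversion. A trivial action would make $G$ abelian, which is impossible since $G$ surjects onto the nonabelian $G_i$; hence the action is inversion. Then $b^2 \in C$ is both centralized and inverted by $b$, forcing $b^2 = (b^2)^{-1}$, and torsion-freeness of $\mathbb{Z}_2$ gives $b^2 = 1$. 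Therefore $G \cong \mathbb{Z}_2 \rtimes C_2$ with inversion action, which is $D_{2^\infty}$.

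The main obstacle is the uniform classification of normal subgroups of the three families in the second paragraph; once that is in hand, the rest is a formal extension-theoretic computation.
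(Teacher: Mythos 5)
Your proof is correct, and it is genuinely more self-contained than what the paper does: the paper dismisses the statement in one line by citing the well-known fact that any proper non-abelian quotient of a dihedral, semi-dihedral or generalized quaternion $2$-group is dihedral, so that the system is cofinally a surjective system of dihedral groups with inverse limit $D_{2^\infty}$. You instead prove the underlying structural input yourself (the classification of normal subgroups, which forces every transition kernel of index $\geq 16$ into the unique cyclic maximal subgroup), extract the procyclic index-two subgroup $C\cong\ZZ_2$ of the limit, and then identify the extension $1\to\ZZ_2\to G\to C_2\to 1$ by the order-two automorphism argument. What your route buys is a complete argument from scratch, including the step the paper leaves implicit (that an infinite surjective limit of dihedral groups really is $D_{2^\infty}$ — your extension computation, showing the action is inversion and $b^2=1$, handles exactly this); what the paper's route buys is brevity, at the cost of relying on the reader knowing both the quotient fact and the identification of the limit. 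Two small points worth tightening: in the normal-subgroup classification, a normal $N\not\subseteq C_i$ containing $\langle a^2\rangle$ could also be $G_i$ itself (harmless, since $G_i$ is in your list); and the commutator computation $[x,a]$ generates $\langle a^2\rangle$ in the semi-dihedral case only because $2-2^{n-2}=2(1-2^{n-3})$ with odd cofactor for $n\geq 4$ — worth a half-sentence since that is where the $|G_i|\geq 16$ hypothesis is really used.
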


 \begin{proof}
     This is very well-known, and follows easily from the fact that any proper non-abelian quotient of a group in the statement is dihedral.
 \end{proof}


    \subsection{Pseudocompact algebras and blocks}

        Throughout the text, $k$ is an algebraically closed field of characteristic $p$, treated where appropriate as a discrete topological ring.  
 
	\begin{defi}
        The topological $k$-algebra $A$ is \emph{pseudocompact} if it has a basis $B$ of open neighbourhoods of $0$ consisting of ideals of finite codimension, such that
    $$\bigcap_{I\in B}I = 0\quad\hbox{ and }\quad A = \invlim_{I\in B}A/I.$$
    Equivalently, a pseudocompact algebra is an inverse limit of discrete finite dimensional algebras.
 \end{defi}
	
	If $G = \invlim_N G/N$ is a profinite group (where $N$ runs through some cofinal set of open normal subgroups of $G$), then the group algebras $kG/N$ form an inverse system of finite dimensional algebras in the natural way, and hence $k\db{G} := \invlim_{N}kG/N$, the \emph{completed group algebra of $G$}, is a pseudocompact algebra. The algebra $k\db{G}$ is a product of indecomposable algebras called blocks, which are precisely the pseudocompact algebras $k\db{G}b$, where $b$ runs through the centrally primitive central idempotents $b$ of $k\db{G}$, which we refer to as block idempotents \cite[\S 4]{FranquizMacQuarrieBrauer}.

    As with finite groups, any block $k\db{G}b$ has a \emph{defect group}, a pro-$p$ subgroup of $G$ which can be defined in many equivalent ways, in perfect analogy with finite groups \cite[Theorem 5.18]{FranquizMacQuarrieBrauer}.  Defect groups exist and are unique up to conjugacy in $G$ \cite[Theorem 5.2 and Proposition 5.7]{FranquizMacQuarrieBrauer}.  A fundamental property of defect groups of profinite groups is that they are necessarily open (so of finite index) in any Sylow $p$-subgroup of $G$ that contains them \cite[Proposition 5.8]{FranquizMacQuarrieBrauer}.  
    
    Here we will be interested in blocks with countably based defect group $D$.
     A profinite group having a block with countably based defect group need not itself be countably based, but Corollary \ref{corol can take G countably based} will show that there is no loss of generality in assuming $G$ to be so.  The key to the proof is the following result from work in preparation by the third author and Symonds:

    \begin{prop}[\cite{MacQSymInPrep}]\label{prop john and peter in prep}
        Let $G$ be a profinite group with closed normal subgroup $N$, and denote by $e_N$ the block idempotent of the principal block of $k\db{N}$.  Then $e_N$ is central in $k\db{G}$ and the natural projection $\varphi_N : k\db{G}\to k\db{G/N}$ restricts to a surjection of algebras $k\db{G}e_N\to k\db{G/N}$.  This map is an isomorphism if, and only if, $N$ is a pro-$p'$ subgroup of $G$.
    \end{prop}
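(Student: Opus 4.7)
My plan is to handle the three assertions in sequence, with the iff being the main point.

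For centrality: conjugation by any $g\in G$ defines a continuous algebra automorphism of $k\db{N}$ (by normality of $N$) that permutes the block idempotents and preserves the principal block, since it fixes the trivial module. Hence it fixes $e_N$, giving $ge_N=e_Ng$ for all $g\in G$; by $k$-linearity and density of $k[G]$ in $k\db{G}$, $e_N$ is central in $k\db{G}$. Surjectivity is then immediate: the restriction of $\varphi_N$ to $k\db{N}$ is the augmentation $\varepsilon\colon k\db{N}\to k$, and $\varepsilon(e_N)=1$, so $\varphi_N(k\db{G}e_N)=\varphi_N(k\db{G})\cdot 1=k\db{G/N}$.

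For the iff I would first reduce to a statement about the principal block $e_Nk\db{N}$. The kernel of $\varphi_N$ in $k\db{G}$ is the closed ideal generated by $\{n-1:n\in N\}$, and using centrality of $e_N$ one checks $\Ker(\varphi_N|_{k\db{G}e_N})=\overline{k\db{G}\cdot \Ker(\varepsilon|_{e_Nk\db{N}})}$: one containment uses that $(n-1)e_N\in\Ker(\varepsilon|_{e_Nk\db{N}})$ for every $n\in N$, the other uses centrality of $e_N$ to see that $\varphi_N$ vanishes on $k\db{G}\cdot \Ker(\varepsilon|_{e_Nk\db{N}})$. Thus $\varphi_N|_{k\db{G}e_N}$ is injective iff $\varepsilon|_{e_Nk\db{N}}$ is, iff $e_Nk\db{N}=k$, equivalently iff the trivial $k\db{N}$-module is projective.

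Finally we show $e_Nk\db{N}=k$ iff $N$ is pro-$p'$. If $N$ is pro-$p'$, then every $kN/(N\cap M)$ is semisimple with principal block equal to $k$, and the averaging idempotents $e_M^{\mathrm{av}}=|N/(N\cap M)|^{-1}\sum_x x$ form a compatible inverse system (direct computation). Their limit is a central idempotent of $k\db{N}$ acting as identity on the trivial simple and as zero on every other (discrete, finite-dimensional) simple; any such idempotent equals $e_N$ by uniqueness (an idempotent of a pseudocompact algebra lying in $J(k\db{N})$ vanishes by topological nilpotency), whence $e_Nk\db{N}=\invlim k\cdot e_M^{\mathrm{av}}=k$. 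Conversely, if $N$ is not pro-$p'$, pick $M$ with $p\mid|N/(N\cap M)|$: the principal block of $kN/(N\cap M)$ has dimension $>1$. The image $\tilde e_M$ of $e_N$ in $kN/(N\cap M)$ still acts as identity on the trivial module, so $\tilde e_M\cdot e_M^{\mathrm{princ}}=e_M^{\mathrm{princ}}$, and the image $\tilde e_M\cdot kN/(N\cap M)$ of $e_Nk\db{N}$ contains the principal block of $kN/(N\cap M)$ as a subalgebra and therefore has dimension $>1$. Hence $e_Nk\db{N}\neq k$.

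The main obstacle is this last step: the principal block idempotents of the finite quotients $kN/(N\cap M)$ do not in general form a compatible inverse system (for instance, at $p=3$ under $S_3\twoheadrightarrow\ZZ/2$ the principal block idempotent $1\in kS_3$ does not map to the principal block idempotent $(1+\sigma)/2\in k\ZZ/2$), so one cannot simply describe $e_N$ as a limit of principal idempotents. The clean identification of $e_N$ via averaging idempotents is available only once $N$ is known to be pro-$p'$, and the failure of this identification in the non-pro-$p'$ case is precisely what produces the non-trivial kernel.
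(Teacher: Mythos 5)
The paper does not actually prove this proposition: it is quoted verbatim from the reference \cite{MacQSymInPrep} (work in preparation by the third author and Symonds), so there is no in-text argument to compare yours against. Judged on its own terms, your proof is correct and complete in outline. The centrality and surjectivity arguments are exactly right ($g$-conjugation preserves the principal block of $k\db{N}$ because it fixes the trivial module, and $\varphi_N(e_N)=\varepsilon(e_N)=1$). The reduction of injectivity to $I_N e_N=0$, i.e.\ to $e_Nk\db{N}\cong k$, is sound, though you should record that $\Ker(\varphi_N)=\overline{k\db{G}I_N}$ (standard for completed group algebras) and that the manipulation $\overline{k\db{G}I_N}\,e_N=\overline{k\db{G}I_Ne_N}$ uses that images of continuous maps of pseudocompact modules are closed. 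Your forward direction correctly exploits that the averaging idempotents of the $p'$-quotients \emph{do} form a compatible system, and the identification of their limit with $e_N$ via ``a central idempotent killing every simple lies in $\bigcap_n J^n=0$'' is the right uniqueness argument; your converse, passing to a finite quotient $N/(N\cap M)$ of order divisible by $p$ and observing that the image of $e_N$ covers the principal block there, is also correct (one should add the one-line observation that $N$ not pro-$p'$ forces some quotient $N/(N\cap M)$, with $M$ open normal in $G$, to have order divisible by $p$, since the $N\cap M$ are cofinal among open normal subgroups of $N$). The aside ``equivalently iff the trivial $k\db{N}$-module is projective'' is unused and its harder implication is unjustified, so it is best deleted. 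Your closing remark that the principal block idempotents of the finite quotients do not form an inverse system is accurate and is indeed the reason the statement is not a triviality.
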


    \begin{corollary}\label{corol can take G countably based}
        Let $G$ be a profinite group and $B = k\db{G}b$ a block.  If the defect group of $B$ is countably based, then there is a countably based profinite group $H$ such that $B$ is isomorphic to a block of $k\db{H}$.
    \end{corollary}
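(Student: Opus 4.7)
The plan is to set $N := S_b = \{g \in G : gb = b\}$ and verify that $H := G/N$ is a countably based profinite group such that $B$ is isomorphic to a block of $k\db{H}$. Centrality of $b$ yields that $N$ is closed and normal in $G$: for $g \in N$ and $g_0 \in G$ one has $(g_0 g g_0^{-1})b = g_0 (gb) g_0^{-1} = g_0 b g_0^{-1} = b$.

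The quotient $\varphi_N \colon k\db{G} \twoheadrightarrow k\db{G/N}$ restricts to a surjection $B = k\db{G}b \twoheadrightarrow k\db{G/N}\bar{b}$, where $\bar{b} := \varphi_N(b)$. Its kernel is $B \cap I_N$, with $I_N := \ker \varphi_N$. By the very definition of $N$, $b(n-1) = 0$ for every $n \in N$, so $b \cdot I_N = 0$; hence any $x \in B \cap I_N$ satisfies $x = xb \in I_N \cdot b = 0$, and the surjection is an isomorphism. Indecomposability of $B$ forces $\bar{b}$ to be a primitive central idempotent of $k\db{G/N}$, exhibiting $B$ as a block of $k\db{G/N}$. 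Note that we have not yet invoked Proposition~\ref{prop john and peter in prep} --- this argument bypasses the need for $N$ to be pro-$p'$, though with a little more care one can verify that $S_b$ is indeed pro-$p'$ (using that the source-algebra embedding makes the defect group $D$ act faithfully on $B$), and so the Proposition also applies and agrees with the conclusion.

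The main obstacle is to show that $G/N$ is countably based. The map $g \mapsto gb$ gives a continuous injection $G/N \hookrightarrow B^\times$ of the profinite group $G/N$ into the unit group of $B$. For each open two-sided ideal $I \triangleleft B$, the quotient $B/I$ is a finite-dimensional (hence discrete) $k$-algebra, so $(B/I)^\times$ is discrete; continuity then forces the profinite image of $G/N$ in $(B/I)^\times$ to be finite, and $K_I := \ker(G/N \to (B/I)^\times)$ is therefore an open subgroup of $G/N$. The intersection $\bigcap_I K_I$ is trivial, so a countable fundamental system of open ideals of $B$ produces a countable fundamental system of open subgroups of $G/N$ with trivial intersection. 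Thus the corollary reduces to showing that $B$ itself is countably based as a pseudocompact algebra, which is where the hypothesis on the defect group enters: the expected argument identifies $B$, up to Morita equivalence, with a twisted completed group algebra of $D$ (via source algebras or a Brauer-pair analysis), so that countable basedness of $B$ follows from that of $k\db{D}$.
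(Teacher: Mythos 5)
Your first two paragraphs are correct and rather clean: with $N=S_b=\{g\in G: gb=b\}$ one does get $b\cdot I_N=0$, hence $B\xrightarrow{\sim} k\db{G/N}\varphi_N(b)$, and indecomposability makes $\varphi_N(b)$ primitive, so $B$ is a block of $k\db{G/N}$. The reduction in your third paragraph is also sound as far as it goes: since $G/N\hookrightarrow B^\times$ continuously and injectively, a countable fundamental system of open ideals of $B$ would yield a countable system of open subgroups of $G/N$ with trivial intersection. The problem is the final step. You have reduced the corollary to the claim that $B$ is countably based as a pseudocompact algebra, but this claim is essentially \emph{equivalent} to the corollary (if $B$ is a block of $k\db{H}$ with $H$ countably based, it is a closed direct summand of the countably based algebra $k\db{H}$, and conversely your own argument), and your justification for it is not a proof. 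The appeal to ``the source-algebra embedding'' or to $B$ being Morita equivalent to ``a twisted completed group algebra of $D$'' invokes structure theory that does not exist for blocks of profinite groups -- building even a fragment of it (for nilpotent blocks) is one of the goals of this paper -- and is not even correct as a statement about finite blocks: only \emph{nilpotent} blocks are Morita equivalent to twisted group algebras of their defect groups, by Puig's theorem. In addition, Morita equivalence by itself would not obviously transfer ``countably based'' in the form you need. So the hypothesis that $D$ is countably based is never actually brought to bear in a rigorous way, and the argument is circular at its crux.

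For comparison, the paper resolves exactly this difficulty on the group side rather than the algebra side. It fixes a Sylow $p$-subgroup $S\supseteq D$; since $D$ is open in $S$, countable basedness of $D$ forces that of $S$, and one can choose countably many open normal subgroups $N_i\unlhd G$ with $\bigcap_i N_i\cap S=1$. The resulting $N'=\bigcap_i N_i$ is then a closed normal pro-$p'$ subgroup, and $G/(M\cap N')$ is countably based \emph{by construction} (a countable intersection of open normal subgroups). The price is that the survival of $B$ in $k\db{G/N}$ is no longer automatic as it is for your maximal choice $N=S_b$; it is supplied by Proposition~\ref{prop john and peter in prep}, which says $k\db{G}e_{N}\cong k\db{G/N}$ for pro-$p'$ normal $N$. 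If you want to salvage your approach, you would need an independent proof that a block with countably based defect group is countably based as an algebra; the shortest route to that is, in effect, the paper's argument.
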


    \begin{proof}
        Fix a $p$-Sylow subgroup $S$ of $G$ containing the defect group $D$ of $B$.  As noted above, $D$ is open in $S$, and hence if $D$ is countably based, then so is $S$.  So there are open subgroups $\{M_i\ : \  i\in \N\}$ of $S$ whose intersection is trivial.  For each $i$, let $N_i$ be an open normal subgroup of $G$ such that $N_i\cap S \subseteq M_i$.  Setting $N' = \bigcap_{i\in \N} N_i$, we have
        $$N'\cap S \subseteq \bigcap M_i = 1,$$
        so that (being normal) $N'$ does not intersect any $p$-Sylow, and hence is a pro-$p'$ subgroup of $G$.  Let $M$ be any open normal subgroup of $G$ for which $k\db{G}\to kG/M$ does not send $b$ to $0$, and set $N = M\cap N'$.  We can take $H = G/N$: with the notation of Proposition \ref{prop john and peter in prep}, we have $b\cdot e_N = b$ because $\varphi_N(be_N) = \varphi_N(b) \neq 0$, by the proposition.  Hence $k\db{G}b$ is a direct summand of $k\db{G}e_N \iso k\db{G/N}$, again by the proposition.
    \end{proof}

    Returning briefly to general algebras, the \emph{Jacobson radical} $J(A)$ of a pseudocompact algebra $A$ is the intersection of its maximal closed left ideals.  It is a closed two sided ideal and coincides with the Jacobson radical of $A$ considered as an abstract (meaning no topology) algebra {\cite[p.444]{Bru}, \cite[Proposition 3.2]{IusenkoMacQuarrieSemisimple}}.  The algebra $A/J(A)$ is (topologically) separable and if it is finite dimensional, as will be the case with the algebras we consider here, then it is separable in the usual sense.  For any $n>1$, we define inductively $J^n(A)$ to be the closed submodule of $A$ generated by $J(A)\cdot J^{n-1}(A)$.  We thus obtain a descending chain 
    $$\cdots\subseteq J^2(A)\subseteq J(A)\subseteq A$$
    of closed left ideals of $A$ whose intersection is $0$.

    \subsection{Morita equivalence}
    
    A pseudocompact algebra $A$ is \emph{basic} if its simple modules have dimension $1$, or equivalently if $A/J(A) \iso \prod_{i\in I} k$ for some indexing set $I$ \cite[Corollary 5.5]{simson2001}.  Any pseudocompact algebra is Morita equivalent to a basic pseudocompact algebra (\cite{G} or \cite[Proposition 5.6]{simson2001}).  We show here that Morita equivalence behaves well with respect to inverse systems of blocks.

    Let $B$ be a block of the profinite group $G$ having a finite number $n$ of simple modules, and let $P_1, \hdots, P_n$ be a complete set of representatives of the isomorphism classes of the indecomposable projective $B$-modules.  Define $P = \prod_{i=1}^nP_i$.  By general Morita theory 
and \cite[Lemma 2.3]{PPJ}, the algebra $A := \tn{End}_B(P)$ is pseudocompact, basic and Morita equivalent to $B$.

Let $\mathcal{N}$ denote the cofinal set of open normal subgroups of $G$ that act trivially on every simple $B$-module.  As in \cite[\S2.3]{FranquizMacQuarrieBrauer}, given a $k\db{G}$-module $U$, we define the module of $N$-coinvariants $U_N=U/I_NU$, where $I_N$ denotes the augmentation ideal of $k\db{N}$. The same applies to blocks by \cite[Remark~2.8]{FranquizMacQuarrieBrauer}: $k\db{G}_N$ is canonically isomorphic to $k\db{G/N}$ as pseudocompact algebras and $B_N$ is a direct factor of $k\db{G/N}$. Thus for any $N\in \mathcal{N}$ the algebra $B_N$ has $n$ simple modules,
 $(P_i)_N$ is non-zero, indecomposable, and not isomorphic to $(P_j)_N$ for any $j\neq i$, and hence $A_N := \tn{End}_{B_N}(P_N)$ is also basic.

\begin{prop}\label{prop inverse limit basic}
	The $A_N$ form a surjective inverse system of algebras and algebra homomorphisms, with inverse limit $A$.
\end{prop}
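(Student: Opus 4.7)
The plan is to realize both $A$ and each $A_N$ as a corner algebra inside $B$ and $B_N$ respectively, and then deduce the claim from the identity $B = \invlim_N B_N$, which holds because $\mathcal{N}$ is cofinal in the set of open normal subgroups of $G$.

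Since each $P_i$ is an indecomposable projective $B$-module, I would write $P_i = Be_i$ for mutually orthogonal primitive idempotents $e_i \in B$, and set $e = e_1 + \cdots + e_n$; then $P \iso Be$. Standard Morita theory identifies $A = \tn{End}_B(Be)$ with $(eBe)^{op}$ via $\phi \mapsto \phi(e)$. Writing $\bar e$, $\bar e_i$ for the images of these idempotents in $B_N$, the hypothesis recalled before the statement, that each $(P_i)_N = B_N\bar e_i$ is non-zero and indecomposable, ensures that $\bar e_i$ is primitive in $B_N$; since orthogonality is preserved by projection, the same argument gives $A_N \iso (\bar e B_N \bar e)^{op}$.

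For $N \subseteq M$ in $\mathcal{N}$, the canonical projection $B_N \onto B_M$ sends $\bar e$ to $\bar e$ and therefore restricts to a surjective algebra map $\bar e B_N \bar e \onto \bar e B_M \bar e$. Under the identifications above, this corresponds to the natural transition map $A_N \to A_M$ sending an endomorphism $\phi$ of $P_N$ to its induced endomorphism on $P_M = P_N/(I_M/I_N)P_N$, which is well-defined because $\phi$ is $B_N$-linear and the submodule $B_N$-stable. Since $eBe$ is a closed subalgebra of the pseudocompact algebra $B$ (it is the fixed-point set of the continuous map $b \mapsto ebe$), applying $e(-)e$ to $B = \invlim_N B_N$ gives $eBe \iso \invlim_N \bar e B_N \bar e$, and passage to opposite algebras yields $A \iso \invlim_N A_N$.

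The main technical point requiring care is the compatibility in the third paragraph: checking that the Morita identifications $\phi \mapsto \phi(\bar e)$ intertwine the ``pass-to-quotient'' transition maps on the endomorphism-ring side with the projections $\bar e B_N \bar e \onto \bar e B_M \bar e$ on the corner-algebra side. This is a direct diagram chase once both identifications are written out explicitly.
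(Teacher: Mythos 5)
Your proof is correct, but it takes a genuinely different route from the paper's. The paper works directly with the endomorphism rings: it defines the transition maps $\psi_{MN}\colon A_N\to A_M$, $\gamma\mapsto\gamma_M$, asserts that the identification of the inverse limit with $A$ is routine, and proves surjectivity by lifting a given $\delta\colon P_M\to P_M$ through the projection $P_N\twoheadrightarrow P_M$ using the projectivity of $P_N$ over $B_N$. You instead realize everything as corner algebras: $A\iso (eBe)^{op}$ and $A_N\iso(\bar eB_N\bar e)^{op}$, after which surjectivity of the transition maps is immediate from surjectivity of $B_N\twoheadrightarrow B_M$ (lift $x$, then cut by $\bar e$), and the identification of the inverse limit follows by commuting $e(-)e$ with $\invlim$. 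The trade-off is roughly this: your approach makes both halves of the statement formal consequences of $B=\invlim_N B_N$, but requires the idempotent bookkeeping — choosing orthogonal primitive $e_i$ with $P_i\iso Be_i$, noting that the hypothesis that $(P_i)_N$ is non-zero and indecomposable forces $\bar e_i$ to remain primitive, and verifying (as you correctly flag) that the Morita identification $\phi\mapsto\phi(\bar e)$ intertwines the quotient maps on endomorphisms with the corner projections. The paper's argument avoids idempotents entirely and isolates projectivity as the one substantive input, at the cost of leaving the limit identification to the reader. Both arguments are sound; yours arguably gives a cleaner reason for surjectivity, while the paper's is closer to how the transition maps are actually used later (as ``pass to coinvariants'' on endomorphisms).
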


\begin{proof}
	It is routine to check that when $N\leqslant M$, the maps $\psi_{MN} : A_N\to A_M$ sending the endomorphism $\gamma$ of $P_N$ to the endomorphism $\gamma_M$ of $P_M$ yield an inverse system of algebras, with inverse limit $A$.  It remains to check surjectivity.  Given $\delta : P_M\to P_M$ in $A_M$ we have the following diagram 
	$$\xymatrix{P_N\ar[d]_{\varphi_{MN}} & P_N\ar[d]^{\varphi_{MN}} \\ 
		P_M\ar[r]_{\delta} & P_M}$$
	Regarding this as a diagram of $kG/N$-modules, it can be completed to a commutative square via $\delta' : P_N\to P_N$, by the projectivity of (the left hand) $P_N$.  Now $\psi_{MN}(\delta') = \delta$ and so the maps of the inverse system are surjective.
\end{proof}

It follows from the above proposition that if $B$ is a block having finitely many simple modules, then the basic algebra $A$ Morita equivalent to $B$ is the inverse limit of a surjective inverse system of basic algebras $A_N$, with $A_N$ Morita equivalent to $B_N$. 

    \subsection{Completed path algebras}\label{section completed path algebras}
    
    Any basic pseudocompact algebra can be described combinatorially, but to simplify the conversation, we restrict to the class of algebras that will interest us in this article: namely, those basic pseudocompact algebras $A$ for which $J^2(A)$ has finite codimension in $A$.

    A finite quiver $Q$ is simply a finite directed graph, with multiple edges and loops allowed.  A path of length $n$ in $Q$ ($n\geqslant 0$) is a sequence of $n$ composable arrows of $Q$.  There is a path $e_i$ of length $0$ at each vertex $i$ of $Q$.   For each $n$, let $kQ_n$ be the vector space with basis the paths of length $n$, and define the \emph{completed path algebra}
    $$k\db{Q} := \prod_{n\geqslant 0}kQ_n.$$
    The only difference between the completed and the usual path algebra is that we take the product rather than the sum. This is a basic pseudocompact algebra, with multiplication of paths defined in the obvious way: the product of two paths is the concatenation when they are composable, or $0$ otherwise.  We adopt the convention that paths are composed from right to left, so that for example if $Q$ is the quiver
    $$ \begin{tikzcd}
			2  
			& 1 \arrow[l,swap,"b"]  & 0 \arrow[l,swap,"a"] 
		\end{tikzcd}$$
    then $k\db{Q}$ ($= kQ$ in this example) has basis $\{e_0, e_1, e_2, a,b,ba\}$, and some examples of multiplication are
    $$b\cdot a = ba, a\cdot b = 0.$$
    For any $s\geqslant 1$ we have 
    $$J^s(k\db{Q}) = \prod_{n\geqslant s}kQ_n.$$
    A \emph{relation ideal} of $k\db{Q}$ is a closed ideal $I$ of $k\db{Q}$ contained in $J^2(k\db{Q})$, while an \emph{admissible ideal} is an ideal $I$ of $k\db{Q}$ such that $J^n(k\db{Q})\subseteq I \subseteq J^2(k\db{Q})$ for some $n\geqslant 2$.  A relation ideal is admissible if, and only if, $k\db{Q}/I$ is finite dimensional \cite[Proposition 5.3]{JK}.  Every basic pseudocompact algebra such that $J^2(A)$ has finite codimension in $A$ is isomorphic to an algebra of the form $k\db{Q}/I$, where $Q$ is a finite quiver and $I$ is a relation ideal of $k\db{Q}$ \cite[Chapter 6, \S 6]{Kra}. 

The following proposition is quite general and may be useful in other contexts.  
In our intended application, we will obtain a surjective inverse system 
$$\cdots \to k\db{Q}/I_3 \to k\db{Q}/I_2 \to k\db{Q}/I_1$$
where the ideals $I_n$ form a descending chain.  But we will not have control over the maps, so we must justify that the inverse limit is the ``obvious'' algebra $k\db{Q}/\bigcap_n I_n$:

\begin{prop}\label{prop unique limit}
    Let $Q$ be a finite quiver and $I_1 \supseteq I_2 \supseteq \hdots$ a chain of closed relation ideals of $k\db{Q}$, and set $I = \bigcap_{n\in \mathbb{N}}I_n$.  For each $s\in \mathbb{N}$, write $J^s = J^s(k\db{Q})$.
    For each $n\in \mathbb{N}$, let $\rho_{n,n+1} : k\db{Q}/I_{n+1}\to k\db{Q}/I_n$ be a surjective algebra homomorphism, and whenever $m\leqslant n$ define 
    $$\rho_{mn} := \rho_{m,m+1}\rho_{m+1,m+2}\hdots \rho_{n-1,n},$$
    so that $\{k\db{Q}/I_n, \rho_{mn}\}$ is an inverse system of algebras.  
    Then
    $$\invlim_{n\in \mathbb{N}}\{k\db{Q}/I_n, \rho_{mn}\}\iso k\db{Q}/I.$$
\end{prop}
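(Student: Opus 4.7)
The plan is to build an isomorphism of inverse systems between the given system and the ``natural'' system $\{k\db{Q}/I_n, \pi^n_m\}$ whose transition maps $\pi^n_m : k\db{Q}/I_n \to k\db{Q}/I_m$ (for $m \leqslant n$) are the canonical projections.  The inverse limit of the natural system is $k\db{Q}/I$: the obvious map $k\db{Q}/I \to \invlim k\db{Q}/I_n$ is injective because $\bigcap_n I_n = I$, and surjective because a nested sequence of nonempty closed cosets in the compact pseudocompact algebra $k\db{Q}$ has nonempty intersection.  Hence an isomorphism of inverse systems will yield the desired conclusion.

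To produce one, I construct algebra automorphisms $\alpha_n \in \mathrm{Aut}(k\db{Q}/I_n)$ inductively, starting from $\alpha_1 = \mathrm{id}$, satisfying $\pi^{n+1}_n \circ \alpha_{n+1} = \alpha_n \circ \rho_{n,n+1}$ for all $n \geqslant 1$.  Given $\alpha_n$, the composite $\nu := \alpha_n \circ \rho_{n,n+1} : k\db{Q}/I_{n+1} \to k\db{Q}/I_n$ is surjective, and producing $\alpha_{n+1}$ reduces to finding an automorphism of $k\db{Q}/I_{n+1}$ carrying $\ker\nu$ onto $\ker\pi^{n+1}_n = I_n/I_{n+1}$.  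I lift $\nu$ to a continuous algebra endomorphism $\tilde\nu : k\db{Q} \to k\db{Q}$ by choosing in $k\db{Q}$ preimages of the images under $\nu$ of the vertex idempotents (as orthogonal primitive idempotents, using idempotent lifting in pseudocompact algebras) and of the arrows (in the appropriate ``sandwich'' inside the Jacobson radical).  Since both $I_n$ and $I_{n+1}$ are contained in $J^2$, the map $\nu$ induces an isomorphism modulo $J^2$, so $\tilde\nu$ is surjective modulo $J^2$; but a continuous endomorphism of $k\db{Q}$ that is surjective modulo $J^2$ induces a surjective, hence bijective, endomorphism of each finite-dimensional quotient $k\db{Q}/J^s$, and is therefore an automorphism.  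The commutativity of the lifting square forces $\tilde\nu(I_{n+1}) \subseteq I_n$.

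The principal obstacle is that $\tilde\nu$ need not preserve $I_{n+1}$, so it does not automatically descend to an automorphism of $k\db{Q}/I_{n+1}$.  To remedy this, one composes $\tilde\nu$ with a further automorphism of $k\db{Q}$ that fixes $I_n$ and maps $\tilde\nu(I_{n+1})$ onto $I_{n+1}$; the existence of such an adjustment rests on the uniqueness up to automorphism of quiver-with-relations presentations of basic pseudocompact algebras (cf.~\cite[Ch.~6]{Kra}), applied to the two relation ideals $\tilde\nu(I_{n+1})$ and $I_{n+1}$, both contained in $I_n$, whose quotients $k\db{Q}/\tilde\nu(I_{n+1})$ and $k\db{Q}/I_{n+1}$ are isomorphic via $\tilde\nu$.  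Once this refinement is in place, $\tilde\nu$ descends to the required $\alpha_{n+1}$ and the induction proceeds; passing to inverse limits along the two rows of the resulting commutative ladder then gives $\invlim\{k\db{Q}/I_n,\rho_{mn}\} \cong k\db{Q}/I$.
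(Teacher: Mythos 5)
Your strategy---rigidifying the given system into the canonical one $\{k\db{Q}/I_n,\pi^n_m\}$ by a ladder of automorphisms $\alpha_n$---is genuinely different from the paper's, and it breaks down at exactly the step you flag as ``the principal obstacle.'' Tracing through your construction: $\alpha_{n+1}$ is to be induced by $\sigma\circ\tilde\nu$, and the square $\pi^{n+1}_n\circ\alpha_{n+1}=\alpha_n\circ\rho_{n,n+1}=\nu$ commutes if and only if $p_n\circ\sigma\circ\tilde\nu=p_n\circ\tilde\nu$ (where $p_m:k\db{Q}\to k\db{Q}/I_m$ is the projection); since $\tilde\nu$ is surjective this forces $\sigma(x)-x\in I_n$ for \emph{all} $x\in k\db{Q}$, not merely $\sigma(I_n)=I_n$. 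So you need an automorphism $\sigma$ of $k\db{Q}$ that carries $\tilde\nu(I_{n+1})$ onto $I_{n+1}$ \emph{and} induces the identity on $k\db{Q}/I_n$. The uniqueness of quiver-with-relations presentations in [Kra, Ch.\ 6] only produces \emph{some} automorphism $\theta$ with $\theta(\tilde\nu(I_{n+1}))=I_{n+1}$, with no control on $\theta$ modulo $I_n$; and if you settle for $\sigma(I_n)=I_n$ setwise you obtain $\pi^{n+1}_n\circ\alpha_{n+1}=\bar\sigma\circ\alpha_n\circ\rho_{n,n+1}$ for a spurious automorphism $\bar\sigma$ of $k\db{Q}/I_n$, and absorbing $\bar\sigma$ into $\alpha_n$ destroys the already-established square at level $n-1$ (as $\bar\sigma$ need not preserve $I_{n-1}/I_n$). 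What your induction really needs is the relative rigidity statement that every ideal $K$ of $k\db{Q}/I_{n+1}$ with $(k\db{Q}/I_{n+1})/K\cong k\db{Q}/I_n$ is carried to $I_n/I_{n+1}$ by an automorphism inducing a prescribed map modulo $I_n$; this is not what the cited uniqueness theorem says, and it is not clear it is true. (Minor additional quibbles: $k\db{Q}$ is linearly compact, not compact, though the nested-cosets argument survives; and one should check that your lift $\tilde\nu$ of the surjection $\nu$ can be chosen to be a continuous \emph{algebra} homomorphism, which requires lifting the images of the vertex idempotents to a complete orthogonal system---standard, but worth a word.)

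For contrast, the paper avoids any choice of lifts or automorphisms by interpolating the doubly-indexed finite-dimensional quotients $k\db{Q}/(I_n+J^s)$: a surjective algebra homomorphism $k\db{Q}/I_{n+1}\to k\db{Q}/I_n$ automatically sends radical to radical, hence induces canonical maps on these quotients; for fixed $s$ everything is a quotient of the finite-dimensional algebra $k\db{Q}/J^s$, so the transition maps in the $s$-th column are eventually isomorphisms and the column has limit $k\db{Q}/(I+J^s)$; interchanging the two inverse limits then gives $\invlim_n k\db{Q}/I_n\iso\invlim_s k\db{Q}/(I+J^s)=k\db{Q}/I$. If you want to rescue your approach you would have to supply a proof of the relative rigidity statement above; as written, the argument has a genuine gap at that point.
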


\begin{proof}
    In order to avoid confusing indices, we introduce the following abuses of notation.  Firstly, whenever $m\leqslant n$ and $L$ is an ideal of $k\db{Q}$, we denote by $\pi_{mn} : k\db{Q}/(L+J^n)\to k\db{Q}/(L+J^m)$ the canonical projection.  Secondly, given ideals $L,L'$ of $k\db{Q}$ and a surjective algebra homomorphism $\gamma : k\db{Q}/L\to k\db{Q}/L'$, we denote also by $\gamma$ the induced homomorphism $k\db{Q}/(L+J^n)\to k\db{Q}/(L'+J^n)$.  Consider the following diagram:
    $$\xymatrix{ 
    & \ar[d] & \ar[d] & \ar[d] \\
\ar[r] & k\db{Q}/(I_3+J^3) \ar[r]_{\pi_{23}}\ar[d]^{\rho_{23}} & k\db{Q}/(I_3+J^2) \ar[r]_{\pi_{12}}\ar[d]^{\rho_{23}} & k\db{Q}/(I_3+J^1)\ar[d]^{\rho_{23}}  \\
\ar[r] & k\db{Q}/(I_2+J^3) \ar[r]_{\pi_{23}}\ar[d]^{\rho_{12}} & k\db{Q}/(I_2+J^2) \ar[r]_{\pi_{12}}\ar[d]^{\rho_{12}} & k\db{Q}/(I_2+J^1)\ar[d]^{\rho_{12}} \\
\ar[r] & k\db{Q}/(I_1+J^3) \ar[r]_{\pi_{23}} & k\db{Q}/(I_1+J^2) \ar[r]_{\pi_{12}} & k\db{Q}/(I_1+J^1) 
    }$$
Note that the squares commute.  For each fixed $n\in \mathbb{N}$, the $n$th row $\{k\db{Q}/(I_n + J^s), \pi_{st}\}$ is an inverse system, with inverse limit $k\db{Q}/I_n$.  

For each fixed $s$, the $s$th column is an inverse system $\{k\db{Q}/(I_n + J^s), \rho_{mn}\}$, whose limit we claim is $k\db{Q}/(I+J^s)$: the algebras $k\db{Q}/(I_n+ J^s)$ are quotients of the finite dimensional algebra $k\db{Q}/J^s$, so there must be $n_0\in \mathbb{N}$ for which $\rho_{mn} : k\db{Q}/(I_n+ J^s)\to k\db{Q}/(I_m + J^s)$ is an isomorphism whenever $n\geqslant m\geqslant n_0$.  Working in the inverse system of $n\geqslant n_0$, we define for each $n$ the map
$$\theta_n : k\db{Q}/(I+J^s)\xrightarrow{\pi_{n_0}} k\db{Q}/(I_{n_0} + J^s) \xrightarrow{{\rho_{n_0n}}^{-1}}k\db{Q}/(I_n + J^s).$$
The $\theta_n$ yield a surjective map of inverse systems $\{k\db{Q}/(I+J^s), \tn{id}\}\to \{k\db{Q}/(I_n + J^s), \rho_{mn}\}$ and hence a surjective algebra homomorphism
$$k\db{Q}/(I+ J^s)\to \invlim_n k\db{Q}/(I_n + J^s),$$
which is an isomorphism because, since $J^s$ has finite codimension in $k\db{Q}$, $I_n + J^s = I + J^s$ for sufficiently large $n$.

Now, because the squares commute, the vertical maps yield a map of inverse systems between any two adjacent rows, and the horizontal maps yield a map of inverse systems between any two adjacent columns.  Passing to the limits, we thus obtain
$$\xymatrix{
&\ar[r]&  k\db{Q}/(I+ J^3)\ar[r]^{\pi_{23}} & k\db{Q}/(I+ J^2)\ar[r]^{\pi_{12}} & k\db{Q}/(I+ J^1)
 & \\
   \ar[d] & & \vdots\ar[d] & \vdots\ar[d] & \vdots\ar[d] \\
k\db{Q}/I_3\ar[d]_{\rho_{23}} &\cdots\ar[r] & k\db{Q}/(I_3+J^3) \ar[r]_{\pi_{23}}\ar[d]^{\rho_{23}} & k\db{Q}/(I_3+J^2) \ar[r]_{\pi_{12}}\ar[d]^{\rho_{23}} & k\db{Q}/(I_3+J^1)\ar[d]^{\rho_{23}}  \\
k\db{Q}/I_2\ar[d]_{\rho_{12}} & \cdots\ar[r] & k\db{Q}/(I_2+J^3) \ar[r]_{\pi_{23}}\ar[d]^{\rho_{12}} & k\db{Q}/(I_2+J^2) \ar[r]_{\pi_{12}}\ar[d]^{\rho_{12}} & k\db{Q}/(I_2+J^1)\ar[d]^{\rho_{12}} \\
 k\db{Q}/I_1 & \cdots\ar[r] & k\db{Q}/(I_1+J^3) \ar[r]_{\pi_{23}} & k\db{Q}/(I_1+J^2) \ar[r]_{\pi_{12}} & k\db{Q}/(I_1+J^1) 
    }$$
By \cite[Proposition 2.12.1]{Borceux}, 
the inverse limit $\invlim_n \{k\db{Q}/I_n, \rho_{mn}\}$ of the left most vertical inverse system is isomorphic to the inverse limit of the upper horizontal inverse system, which is $k\db{Q}/I$. 
\end{proof}

\begin{remark}
    If one is interested in bounded completed path algebras of possibly infinite quivers, the proof of the above proposition allows the following generalization: if $Q$ is a quiver, $I_1 \supseteq I_2 \supseteq \hdots$ is a chain of relation ideals of $k\db{Q}$, and $\{k\db{Q}/I_n, \rho_{mn}\}$ is a surjective inverse system with the property that for every $s$, the maps of the induced inverse system $\{k\db{Q}/(I_n+ J^s), \rho_{mn}\}$ are eventually isomorphisms, then $\invlim_{n\in \mathbb{N}}\{k\db{Q}/I_n, \rho_{mn}\} \iso k\db{Q}/\bigcap I_n$.
\end{remark}

    \subsection{Fusion and pro-fusion systems}

    In this section we will provide the necessary background on block fusion systems and pro-fusion systems. Recall that
    a \emph{fusion system} on a finite $p$-group $P$ is a finite category $\mathcal F$ whose objects are the subgroups of $P$ and whose sets of homomorphisms $\Hom_{\mathcal F}(R,S)$, for any $R,S\leq P$, consist of injective group homomorphisms from $R$ into $S$ such that certain axioms are satisfied. If the category satisfies a further set of axioms it is called a \emph{saturated} fusion system (note that Linckelmann \cite{LinckelmannVolII} includes the saturation axioms in his definition of a fusion system, while many other authors keep the notions separate). We will not need the axioms explicitly, since all fusion systems in the present paper will come from blocks of finite groups, which are known a priori to be saturated fusion systems. The reader may wish to refer to \cite{LinckelmannVolII, LinckelmannFusionSystems, CravenFusionSystems} for comprehensive surveys of the theory. 

    A morphism between a fusion system $\mathcal F$ on $P$ and a fusion system $\mathcal F'$ on $Q$, where $P$ and $Q$ are finite $p$-groups, is given by a pair $(\alpha, \Phi)$, where $\alpha:\ P \longrightarrow Q$ is a group homomorphism and $\Phi:\ \mathcal F \longrightarrow \mathcal F'$  is a functor such that 
    \begin{enumerate}
        \item $\alpha(R)=\Phi(R)$ for all $R\leq P$, and
        \item $\Phi(\varphi)\circ \alpha=\alpha\circ \varphi$ for all $\varphi\in\Hom_{\mathcal F}(R,S)$, where $R,S\leq P$.
    \end{enumerate}
    The functor $\Phi$ is determined by $\alpha$, so we can think of morphisms between fusion systems as group homomorphisms between the underlying $p$-groups. But not all group homomorphisms give rise to morphisms of fusion systems. 
    Having defined morphisms of fusion systems, we can now think of fusion systems on finite $p$-groups as a category. In particular, it is now clear when two fusion systems are isomorphic. It is also clear what is meant by an inverse system of fusion systems, which will be important when defining pro-fusion systems later. 
    
    \paragraph{Block fusion systems.}
    The archetypical example of a fusion system is the category $\mathcal F_P(G)$, where $G$ is a finite group and $P$ is a (fixed) Sylow $p$-subgroup of $G$. The objects of $\mathcal F_P(G)$ are the subgroups of $P$ and the morphisms are group homomorphisms induced by conjugation by elements of $G$ followed by inclusions.
    
    Block fusion systems are a slight modification of this construction. The crucial ingredient in their definition are \emph{Brauer~pairs}. Their definition and an outline summary of the associated theory is given below.
    \begin{defi}[cf. {\cite[Definition 6.3.1]{LinckelmannVolII}}]
        Let $G$ be a finite group. A \emph{Brauer pair} for $kG$ is a pair $(P,e)$, where $P$ is a $p$-subgroup of $G$ and $e$ is a primitive idempotent of $Z(kC_G(P))$.
    \end{defi}
    We will also need the \emph{Brauer map}, which plays a role in the definiton of the relation ``$\leq$'' on Brauer pairs. 
    \begin{defi}
        Let $G$ be a finite group and let $P\unlhd Q \leq G$ be two $p$-subgroups. 
        The \emph{Brauer map} $\Br_P$ is the linear projection
        \begin{equation}
            \Br_Q: Z(kC_G(P))^Q \longrightarrow Z(kC_G(Q)): \sum_{g\in C_G(P)} a_g g \mapsto \sum_{g\in C_G(Q)} a_g g.\label{eqn def Brauer map}
        \end{equation}
    \end{defi}
    The theory of Brauer pairs and their relationship to blocks, their defect groups and fusion systems is explained in detail in \cite[Chapter 6]{LinckelmannVolII}, and we will refer to this reference for all facts we will be using. The main idea is that the Brauer pairs for $kG$ form a partially ordered set, with an obvious $G$-action preserving the partial order. 
    \begin{prop}[{cf. \cite[Proposition 6.3.4]{LinckelmannVolII}}]\label{prop partial order}
        Let $G$ be a finite group, and let $(Q,f)$ be a Brauer pair for $kG$. If $P$ is a normal subgroup of $Q$ then there exists a \emph{unique} $Q$-stable block idempotent $e\in Z(kC_G(P))$ such that $\Br_Q(e)f=f$. 
    \end{prop}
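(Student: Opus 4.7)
The plan is to reduce the proposition to a short combinatorial argument built on two standard properties of the Brauer map. Since $P \unlhd Q$, the group $Q$ normalizes $C_G(P)$ and acts on $kC_G(P)$ by conjugation, and the $Q$-fixed points of $C_G(P)$ are exactly $C_G(Q)$. It follows from \eqref{eqn def Brauer map} that $\Br_Q$ restricts to an $\F_p$-algebra homomorphism $Z(kC_G(P))^Q \to Z(kC_G(Q))$. I would also record the $Q$-equivariance of $\Br_Q$: for any $x \in kC_G(P)$ and $q \in Q$ one has $\Br_Q(x^q) = \Br_Q(x)$, since for each $h \in C_G(Q)$ the coefficient of $h$ in $x^q$ equals the coefficient of $q^{-1}hq = h$ in $x$.

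For existence, I would start from the primitive central idempotent decomposition $1 = \sum_i e_i$ in $Z(kC_G(P))$, observe that $Q$ permutes the $e_i$, and form the $Q$-orbit sums $\epsilon_j = \sum_{e \in \mathcal O_j} e$. These are orthogonal idempotents in $Z(kC_G(P))^Q$ summing to $1$, so their images under $\Br_Q$ form orthogonal central idempotents of $kC_G(Q)$ summing to $1$. Multiplying by the primitive central idempotent $f$ singles out a unique index $j_0$ with $\Br_Q(\epsilon_{j_0})f = f$ and $\Br_Q(\epsilon_j)f = 0$ for $j \neq j_0$.

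The main step is to show that $|\mathcal O_{j_0}| = 1$, so that $\epsilon_{j_0}$ is itself the required $Q$-stable block idempotent $e$. By the $Q$-equivariance of $\Br_Q$, elements in a common $Q$-orbit have the same Brauer image, so $\Br_Q(\epsilon_j) = |\mathcal O_j| \cdot \Br_Q(e)$ for any $e \in \mathcal O_j$. Since $Q$ is a $p$-group, $|\mathcal O_j| > 1$ forces $p \mid |\mathcal O_j|$, hence $\Br_Q(\epsilon_j) = 0$ in characteristic $p$. As $\Br_Q(\epsilon_{j_0})f = f \neq 0$, this forces $|\mathcal O_{j_0}| = 1$. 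This is the only point at which the $p$-group hypothesis is genuinely used, and it is the crux of the argument.

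Uniqueness is then a short orthogonality argument: two distinct $Q$-stable block idempotents $e \neq e'$ of $Z(kC_G(P))$ are orthogonal and both lie in $Z(kC_G(P))^Q$, so $\Br_Q(e)\Br_Q(e') = \Br_Q(ee') = 0$; if both satisfied the required identity we would obtain $f = \Br_Q(e)f = \Br_Q(e)\Br_Q(e')f = 0$, a contradiction.
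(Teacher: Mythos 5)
Your proof is correct. The paper does not actually prove this proposition; it imports it by citation from Linckelmann (Proposition 6.3.4), and your argument is essentially the standard textbook proof of that result: orbit sums of block idempotents of $kC_G(P)$, vanishing of $\Br_Q$ on orbit sums of length divisible by $p$, and primitivity of $f$ to isolate a single orbit, with uniqueness by orthogonality. The only imported ingredient you should flag explicitly is the multiplicativity of $\Br_Q$ on $\left(kC_G(P)\right)^Q$ (needed for orthogonality of the images $\Br_Q(\epsilon_j)$ and for the uniqueness step), which is itself a nontrivial consequence of $Q$ being a $p$-group; your orbit-counting step correctly extends $\Br_Q$ linearly to non-$Q$-stable elements, where it is only linear, not multiplicative, and that suffices. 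One cosmetic slip: $\Br_Q$ is a $k$-algebra homomorphism, not merely an $\F_p$-algebra homomorphism.
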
 

    \begin{defi}[Partial order]
        In the situation of Proposition~\ref{prop partial order} we declare $(P,e)\leq (Q,f)$. The transitive closure of this relation defines a partial order ``$\leq$'' on all Brauer pairs for $kG$.
    \end{defi}

    \begin{prop}[{cf. \cite[Theorem 6.3.3]{LinckelmannVolII}}]
        Let $(Q,f)$ be a Brauer pair for $kG$ and let $P$ be a subgroup of $Q$. Then there is a unique idempotent $e$ such that $(P,e)\leq (Q,f)$.
    \end{prop}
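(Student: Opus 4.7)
The plan is to prove both existence and uniqueness by induction on the index $[Q:P]$. The base case $P=Q$ is trivial since one must take $e=f$.

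For the inductive step, suppose $P<Q$. Since $Q$ is a finite $p$-group, the normalizer $R:=N_Q(P)$ strictly contains $P$. For existence, if $R=Q$ then $P\lhd Q$ and Proposition~\ref{prop partial order} applied directly to $(Q,f)$ supplies the required $e$; otherwise $[Q:R]<[Q:P]$, so by induction there is a unique $e_R\in Z(kC_G(R))$ with $(R,e_R)\leq(Q,f)$, and Proposition~\ref{prop partial order} applied to $P\lhd R$ and $e_R$ yields $e$ with $(P,e)\lhd(R,e_R)$; concatenating chains gives $(P,e)\leq(Q,f)$.

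For uniqueness, suppose $(P,\tilde e)\leq(Q,f)$ is witnessed by a chain $(P,\tilde e)=(P_0,f_0)\lhd(P_1,f_1)\lhd\cdots\lhd(P_n,f_n)=(Q,f)$ with $n\geq 1$. Since $P\lhd P_1$, we have $P_1\leq R$, and by the inductive hypothesis applied to $(P_1,\cdot)\leq(Q,f)$ (using $[Q:P_1]<[Q:P]$), $f_1$ is uniquely determined by $f$; the same argument, performed in the sub-poset below $(R,e_R)$, yields $(P_1,f_1)\leq(R,e_R)$, so the chain can be routed through $(R,e_R)$, giving $(P,\tilde e)\leq(R,e_R)$. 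When $R<Q$ the strict inequality $[R:P]<[Q:P]$ then lets induction force $\tilde e=e$. When $R=Q$, i.e.\ $P\lhd Q$ directly, the chain must instead be collapsed via a transitivity argument to match the single-step covering supplied by Proposition~\ref{prop partial order}.

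The main obstacle is this final collapse step: if $P\lhd Q$ and also $(P,\tilde e)\lhd(T,e_T)\leq(Q,f)$ for some $T$ with $P\lhd T<Q$, one must verify $\tilde e=e$, where $e$ is the idempotent produced directly from $P\lhd Q$ by Proposition~\ref{prop partial order}. I expect this to follow from the compatibility of the Brauer maps $\Br_T$ and $\Br_Q$ on the relevant fixed subalgebras of $Z(kC_G(P))$, combined with the defining characterization $\Br_Q(e)f=f$; this is precisely the transitivity content that makes the partial order on Brauer pairs well-defined in the first place.
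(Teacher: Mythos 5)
The paper does not prove this statement at all: it is quoted verbatim from the literature (Alperin--Brou\'e, as presented in \cite[Theorem 6.3.3]{LinckelmannVolII}), and the surrounding text explicitly defers all facts about Brauer pairs to that reference. So there is no in-paper proof to compare against, and your proposal has to be judged on its own.

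Your skeleton is the standard Alperin--Brou\'e induction on $[Q:P]$ via the normalizer $R=N_Q(P)$, and the existence half is fine. The problem is that you have correctly located, and then not proved, the step that carries essentially all of the content of the theorem. Your routing argument shows that uniqueness always reduces to the case $P\unlhd Q$ (indeed, since $P\unlhd R=N_Q(P)$, even the ``$R<Q$'' branch of your induction lands immediately back in the normal case one level down), so the entire proof rests on the ``collapse'' claim: if $P\unlhd Q$ and $(P,\tilde e)\unlhd (T,e_T)\leq (Q,f)$ for some intermediate $T$, then $\tilde e$ equals the unique $Q$-stable idempotent $e$ with $\Br_Q(e)f=f$ supplied by Proposition~\ref{prop partial order}. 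You ``expect'' this to follow from compatibility of $\Br_T$ and $\Br_Q$, but that transitivity relation ($\Br_Q=\Br_Q\circ\Br_T$ on the appropriate fixed-point subalgebras) is nowhere near sufficient. Two things are missing: first, you must show that $\tilde e$ is $Q$-stable at all --- the hypothesis $(P,\tilde e)\unlhd (T,e_T)$ only gives $T$-stability, and $T$ need not be normal in $Q$; second, you must show that $\Br_Q(\tilde e)\neq 0$ and that it hits $f$, which requires controlling which orbit sums of idempotents are annihilated by the Brauer map. In \cite{LinckelmannVolII} this occupies several preparatory lemmas (involving, e.g., the subgroup $PC_Q(P)$ and an orbit-counting argument for the $Q$-action on the primitive idempotents of $Z(kC_G(P))$), and it is not a formal consequence of the partial order being well defined --- the transitive closure of a relation is automatically transitive, so no ``transitivity content'' is needed for the definition; what is needed here is the much stronger statement that for normal inclusions the transitive closure adds nothing to the one-step relation. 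As it stands, the proposal is an accurate reduction of the theorem to its hard kernel, not a proof.
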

    
    The blocks of $kG$ correspond to the Brauer pairs of the form $(1, b)$, and these are exactly the minimal Brauer pairs with respect to ``$\leq$''. 
    Given $(1, b)$, all $(D, e)$ which are maximal with respect to the property $(1, b)\leq (D, e)$ are $G$-conjugate, and the $p$-subgroups $D$ occurring in such maximal Brauer pairs are exactly the defect groups of the block corresponding to $b$. And once we fix such a maximal Brauer pair $(D, e)$, the poset of all Brauer pairs $\leq (D, e)$ is canonically identified with the poset of subgroups of $D$ with respect to inclusion, that is, any $Q\leq D$ fits into a unique Brauer pair $(Q,e_Q)$ with $(Q,e_Q)\leq (D,e)$. This leads to the definition of block fusion systems which we will generalize to the profinite setting.

    \begin{defi}
        Let $kGb$ be a block and let $(D,e)$ be a maximal Brauer pair with $(1,b)\leq (D,e)$. For any $Q\leq D$ let $e_Q\in Z(kC_G(Q))$ denote the unique block idempotent such that $(Q,e_Q)\leq (D,e)$. Then we define the block fusion system $\mathcal F=\mathcal F_{(D,e)}(G,b)$ as follows:
        \begin{enumerate}
            \item The objects are the subgroups of $D$, and
            \item for $P,Q\leq D$ we define $\Hom_{\mathcal F}(P,Q)$ to consist of all homomorphisms $\varphi:\ P \longrightarrow Q$ for which there exists a $g\in G$ such that
            \begin{enumerate}
                \item $\varphi(x) = x^g$ for all $x\in P$, and
                \item $(P^g, e_P^g)\leq (Q, e_Q)$.
            \end{enumerate}
        \end{enumerate}
    \end{defi}
    This is clearly a subcategory of $\mathcal F_D(G)$, but $D$ need not be a Sylow $p$-subgroup of $G$, so $\mathcal F_D(G)$ may fail to be a saturated fusion system. The category $\mathcal F_{(D,e)}(G,b)$ itself is known to always be a saturated fusion system, and we will not need to know much else about it. 
    Note that $\mathcal F_{(D,e)}(G,b)$ obviously depends on $(D,e)$, but since all admissible choices of $(D,e)$ are conjugate in $G$, the isomorphism type of  $\mathcal F_{(D,e)}(G,b)$ really only depends on the block $kGb$.

    \paragraph{Quotients.} 
    Morphisms between fusion systems were defined above, and it is fairly obvious how embeddings work. For instance, if $D$ is a Sylow $p$-subgroup of $G$, then $\mathcal F_{(D,e)}(G,b)$ embeds into  $\mathcal F_D(G)$. Since we will need to construct inverse systems of fusion systems, we will need quotients as well. The conditions for forming quotients given in \cite{LinckelmannVolII} are too restrictive, so we follow \cite{StancuSymonds} instead. 

    \begin{defi}
        Let $\mathcal F$ be a fusion system on the finite $p$-group $P$. A subgroup $S\leq P$ is \emph{strongly closed} if $\varphi(Q)\leq S$ for all $Q\leq S$ and all $\varphi\in\Hom_{\mathcal F}(Q, P)$.
    \end{defi}
    Note that in the block fusion system  $\mathcal F_{(D,e)}(G,b)$ the subgroup $N\cap D$ is strongly closed for any normal subgroup $N$ of $G$. The same is true in $\mathcal F_D(G)$ if $D$ is a Sylow $p$-subgroup of $G$. There may be more strongly closed subgroups in the case of block fusion systems, but for our purposes only the ones of the form $N\cap D$ will be needed. The important feature of strongly closed subgroups is that we can take quotients by them.
    \begin{defi}
        Let $\mathcal F$ be a fusion system on the finite $p$-group $P$ and let $S$ be a strongly closed subgroup of $P$. Then we can define a fusion system $\mathcal F/S$ on $P/S$ by letting $\Hom_{\mathcal F/S}(Q/S, R/S)$ for $Q,R\leq P$ with $S\leq Q,R$ be the image of $\Hom_{\mathcal F}(Q,R)$ under the natural map.
    \end{defi}
    
    It is not a priori clear that there is a morphism $\mathcal F\longrightarrow \mathcal F/S$, since the map from $\Hom_{\mathcal F}(Q,R)$ to $\Hom_{\mathcal F/S}(QS/S, RS/S)$ may be ill-defined when $S$ is not contained in $Q$ and/or $R$. However, for saturated fusion systems it turns out that there is indeed a morphism.
    \begin{prop}[{cf. \cite[Corollary 2.6]{StancuSymonds}}]\label{prop factor out strongly closed}
        Let $\mathcal F$ be a saturated fusion system on the finite $p$-group $P$ and let $S$ be a strongly closed subgroup of $P$. Then $\mathcal F/S$ is again a saturated fusion system and the natural maps induce a morphism $\mathcal F \longrightarrow \mathcal F/S$.
    \end{prop}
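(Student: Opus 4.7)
My plan is to define the alleged morphism $\mathcal F \longrightarrow \mathcal F/S$ explicitly on both objects and morphisms, verify that it is well-defined (the main hurdle), and then check that $\mathcal F/S$ inherits the saturation axioms from $\mathcal F$. On objects I would send $Q \leq P$ to $QS/S \leq P/S$. On a morphism $\varphi \in \Hom_{\mathcal F}(Q,R)$ I would define $\bar\varphi:\ QS/S \longrightarrow RS/S$ by $qS \mapsto \varphi(q)S$. The first thing to check is that $\bar\varphi$ is a well-defined group homomorphism: applying the strong-closure property of $S$ to the restriction $\varphi|_{Q\cap S}$ (composed with the inclusion $R\hookrightarrow P$) forces $\varphi(Q\cap S)\leq S$, which is exactly what is needed for the coset formula to make sense.

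The core difficulty is that, by definition, $\Hom_{\mathcal F/S}(QS/S, RS/S)$ consists of quotients of $\mathcal F$-morphisms from $QS$ to $RS$, not merely from $Q$ to $R$, so I must extend each $\varphi$ to some $\widetilde\varphi \in \Hom_{\mathcal F}(QS, RS)$ whose induced quotient map agrees with $\bar\varphi$. This is where saturation of $\mathcal F$ is essential, and I expect it to be the main technical step. The strategy is the standard one: first compose $\varphi$ with an $\mathcal F$-isomorphism carrying $\varphi(Q)$ to a fully $\mathcal F$-normalized subgroup, so that the extension axiom becomes available, and then iteratively extend $\varphi$ one element of $S$ at a time through the tower $Q \leq Q\langle s_1\rangle \leq Q\langle s_1, s_2\rangle \leq \cdots$ exhausting $QS$. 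At each stage, strong closure forces the extension to land back in $RS$, and any freedom in choosing the extension is killed off upon passage modulo $S$.

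With the morphism $\mathcal F \longrightarrow \mathcal F/S$ in place, functoriality (preservation of composition and identities) is immediate from the coset formula. Saturation of $\mathcal F/S$ then follows by transferring the axioms through the quotient: a subgroup of $P/S$ that is fully $\mathcal F/S$-normalized can be lifted to a subgroup of $P$ which is fully $\mathcal F$-normalized up to $S$-conjugacy, and the extension axiom in $\mathcal F/S$ is obtained by lifting a morphism, extending it in $\mathcal F$, and pushing the result back down. The extension argument of the second paragraph is, I think, the only non-formal step; everything else amounts to bookkeeping with cosets and careful use of strong closure.
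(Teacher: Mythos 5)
The paper does not actually prove this proposition: it is imported wholesale from Stancu--Symonds (their Corollary~2.6), and the surrounding text only records that the crucial ingredient is the extension fact (their Theorem~2.5, originally Puig's Proposition~6.3) that any $\varphi\in\Hom_{\mathcal F}(Q,R)$ admits some $\widetilde\varphi\in\Hom_{\mathcal F}(QS,RS)$ agreeing with $\varphi$ modulo $S$. Your reduction matches this framing exactly: the formal parts are fine (well-definedness of $\bar\varphi$ via $\varphi(Q\cap S)\leq S$, which you correctly extract from strong closure applied to $\varphi|_{Q\cap S}$; functoriality; the observation that $\Phi$ is forced by $\alpha$), and you correctly isolate the extension of $\varphi$ to $QS$ as the one non-formal step.

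Where the proposal falls short is in the proof of that extension step itself, which is the entire content of the cited results and is not a routine application of the saturation axioms. Two concrete problems. First, the sets $Q\langle s_1\rangle$, $Q\langle s_1,s_2\rangle,\dots$ in your tower need not be subgroups, since the $s_i$ need not normalize $Q$; the correct induction is on the index $[QS:Q]$ via the normalizer $N_{QS}(Q)$, which properly contains $Q$ because $QS$ is a $p$-group. Second, and more seriously, invoking the extension axiom at each stage requires showing that the relevant part of $N_{QS}(Q)$ lies in the control subgroup $N_\varphi$, i.e.\ that for $g\in N_{QS}(Q)$ the automorphism $\varphi c_g\varphi^{-1}$ of $\varphi(Q)$ is realized by conjugation by an element of $N_P(\varphi(Q))$; this is where strong closure and the Sylow axiom do the real work, and your sketch replaces it with the much weaker remark that the image lands in $RS$. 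The saturation of $\mathcal F/S$ is likewise asserted rather than argued. None of this contradicts the paper, which also defers these points to the literature, but as a self-contained argument the proposal has a genuine gap at precisely the step you yourself flag as the main one.
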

    The crucial ingredient for this is the following non-trivial fact: if $Q$ and $R$ are arbitrary subgroups of $P$, then for any $\varphi\in\Hom_{\mathcal F}(Q,R)$ there is a $\tilde\varphi \in\Hom_{\mathcal F}(QS,RS)$ which agrees with $\varphi$ modulo $S$ (but may, of course, fail to restrict to $\varphi$). See \cite[Theorem 2.5]{StancuSymonds}, or \cite[Proposition 6.3]{PuigFrobeniusCategories} for the original assertion by Puig.

    \paragraph{Pro-fusion systems.}
    Stancu and Symonds \cite{StancuSymonds} define a \emph{pro-fusion system} on a pro-$p$ group $P$ starting from an inverse system of fusion systems on finite $p$-groups. Just like in the finite case what they define is a category whose objects are the closed subgroups of $P$ and whose morphism sets consist of injective  continuous group homomorphisms. However, in contrast to the finite case, there is no list of axioms characterizing when such a category is a pro-fusion system -- to prove that it is, one has to realize it via an inverse system of fusion systems on finite $p$-groups. 

    \begin{defi}\label{defi profusion system}
        Let $P$ be a pro-$p$ group, and assume $P=\varprojlim_i P_i$ where $i$ ranges over some directed set $I$ and each $P_i$ is finite. Assume we are given an inverse system of fusion systems $\mathcal F_i$ on $P_i$ for each $i$. Then this defines a category $\mathcal F=\varprojlim_i \mathcal F_i$ where
        \begin{enumerate}
            \item the objects are the closed subgroups of $P$, and
            \item for any two closed subgroups $R, S\leq P$ we set
                \begin{equation}
                    \Hom_{\mathcal F}(R, S) = \varprojlim_i \Hom_{\mathcal F_i}(R_i, S_i),
                \end{equation}
                where $R_i$ and $S_i$ denote the respective images of $R$ and $S$ in $P_i$, and all maps are the ones induced by the inverse system.
        \end{enumerate}
        A category $\mathcal F$ obtained by this construction is called a \emph{pro-fusion system}.  
    \end{defi}

    We can define morphisms of pro-fusion systems just like morphisms of fusion systems.

    \begin{defi}
        Let $\mathcal F$ and $\mathcal F'$ be pro-fusion systems on pro-$p$ groups $P$ and $Q$, respectively. A \emph{morphism} from $\mathcal F$ to $\mathcal F'$ is a pair $(\alpha,\Phi)$, where $\alpha:\ P\longrightarrow Q$ is a continuous group homomorphism and  $\Phi:\ \mathcal F \longrightarrow \mathcal F'$ is a functor such that
        \begin{enumerate}
            \item $\alpha(R)=\Phi(R)$ for all closed subgroups $R\leq P$, and
            \item $\Phi(\varphi)\circ \alpha=\alpha\circ \varphi$ for all $\varphi\in\Hom_{\mathcal F}(R,S)$, where $R,S\leq P$ are closed.
        \end{enumerate}
    \end{defi}
    
    Note that the $\Hom$-sets in $\mathcal F$ and $\mathcal F'$ are topological spaces, so it would be reasonable to ask that $\Phi$ induce continuous maps between $\Hom$-spaces, rather than just maps of sets. This is unnecessary though, since $\alpha$ determines $\Phi$ just like in the finite case, and continuity is automatic. 
    Note that the above definition turns the collection of all pro-fusion systems into a category, and
    Stancu and Symonds show \cite[Section 3]{StancuSymonds} that $\varprojlim_i \mathcal F_i$ as defined above is indeed an inverse limit in this category, justifying the notation. 

    \begin{defi}[{see \cite[Definition 4.1]{StancuSymonds}}]
        A pro-fusion system is called \emph{pro-saturated} if it is isomorphic to an inverse limit of saturated fusion systems on finite $p$-groups.
    \end{defi}
    All pro-fusion systems we construct in the present paper are inverse limits of saturated fusion systems (since block fusion systems are known to be saturated), so they will all automatically be pro-saturated. Pro-saturation has the following interesting consequence, which is useful when trying to describe pro-fusion systems explicitly rather than as an inverse limit.
    \begin{prop}[{see \cite[Section 4.6]{StancuSymonds}}]
        Let $\mathcal F$ and $\mathcal F'$ be pro-saturated pro-fusion systems on a pro-$p$ group $P$. If the full subcategories of $\mathcal F$ and $\mathcal F'$ whose objects are the \emph{open} subgroups of $P$ coincide, then $\mathcal F=\mathcal F'$.  
    \end{prop}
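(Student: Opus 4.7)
The plan is to realize $\mathcal F$ and $\mathcal F'$ as inverse limits of saturated fusion systems indexed by a common cofinal system of open normal subgroups of $P$, and then to show that each term of these inverse systems is fully determined by the restriction to open subgroups of $P$.

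By pro-saturation, write $\mathcal F \cong \varprojlim_i \mathcal F_i$ and $\mathcal F' \cong \varprojlim_j \mathcal F'_j$ with $\mathcal F_i$, $\mathcal F'_j$ saturated fusion systems on finite quotients $P/K_i$ and $P/K'_j$ of $P$. Passing to the cofinal system $\mathcal{N} = \{K_i \cap K'_j\}$ of open normal subgroups of $P$, iterated applications of Proposition~\ref{prop factor out strongly closed} should produce compatible refined representations $\mathcal F = \varprojlim_{N \in \mathcal{N}} \mathcal F_N$ and $\mathcal F' = \varprojlim_{N \in \mathcal{N}} \mathcal F'_N$ with $\mathcal F_N$, $\mathcal F'_N$ saturated fusion systems on the same finite group $P/N$.

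Next, for each $N \in \mathcal{N}$ and each pair of open subgroups $U, V \leq P$ containing $N$, the claim is that $\Hom_{\mathcal F_N}(U/N, V/N)$ is exactly the image of $\Hom_{\mathcal F}(U, V)$ under the natural projection to $\Hom(U/N, V/N)$. One inclusion holds because the projection $\mathcal F \to \mathcal F_N$ is a morphism of pro-fusion systems. For the reverse, writing
\begin{equation}
    \Hom_{\mathcal F}(U, V) = \varprojlim_{M \in \mathcal{N},\, M \leq N} \Hom_{\mathcal F_M}(U/M, V/M)
\end{equation}
realizes the left-hand side as an inverse limit of finite sets whose transition maps are surjective, by the very definition of the quotient fusion system applied to the strongly closed subgroup $N/M \leq P/M$. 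The projection from the inverse limit onto the $M = N$ term is therefore surjective, which yields the claim.

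The same analysis identifies $\mathcal F'_N$ with the image of the corresponding open-subgroup Hom sets of $\mathcal F'$. By hypothesis, these images coincide, so $\mathcal F_N = \mathcal F'_N$ for every $N \in \mathcal{N}$, and taking the inverse limit gives $\mathcal F = \mathcal F'$. The main obstacle is the initial refinement step: one must verify that the kernels $N/K_i$ appearing during the refinement are strongly closed in each $\mathcal F_i$ (so that Proposition~\ref{prop factor out strongly closed} applies) and that the refined tower still has $\mathcal F$ as its inverse limit; once this bookkeeping is in place, the remainder of the argument is essentially formal.
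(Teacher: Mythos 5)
First, note that the paper does not prove this proposition at all: it is quoted from \cite[Section 4.6]{StancuSymonds}, so there is no internal argument to compare yours against, and I can only assess your proof on its own terms. Your overall strategy --- realize both pro-fusion systems over a common system of open normal subgroups, identify each finite layer with the image of the Hom-sets between open subgroups, and pass to the limit --- is sound in outline and close in spirit to the cited reference. The closing steps are fine: surjectivity of the projection from an inverse limit of finite sets with surjective transition maps is standard, and the identification of $\Hom_{\mathcal F_N}(U/N,V/N)$ with the image of $\Hom_{\mathcal F}(U,V)$ is correct, granted that the inverse systems have first been normalized to be surjective in the sense of \cite[Lemma 4.2]{StancuSymonds}, so that $\mathcal F_N=\mathcal F_M/(N/M)$ whenever $M\leq N$; you should say this explicitly, since the surjectivity of your transition maps depends on it.

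The genuine gap is the common refinement step, which you flag but substantially underestimate. Proposition~\ref{prop factor out strongly closed} only produces quotients, i.e.\ fusion systems on \emph{smaller} groups; it cannot manufacture $\mathcal F_N$ on the larger quotient $P/N$ out of $\mathcal F_i$ on $P/K_i$ when $N=K_i\cap K'_j\subsetneq K_i$. The only reasonable definition is $\mathcal F_N:=\mathcal F/N$, and for that you must show $N$ is strongly closed in $\mathcal F$. Strong closure of $K_i$ in $\mathcal F$ is known, but $K'_j$ is only known to be strongly closed in $\mathcal F'$, and strong closure quantifies over \emph{all} closed subgroups $Q\leq K'_j$, whereas your hypothesis only controls morphisms between open subgroups. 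The gap can be repaired: given a closed $Q\leq K'_j$ and $\varphi\in\Hom_{\mathcal F}(Q,P)$, lift each component $\varphi_i$ to some $\tilde\varphi_i\in\Hom_{\mathcal F}(QK_i,P)$ (using $\mathcal F_i=\mathcal F/K_i$), restrict to the open subgroup $Q(K'_j\cap K_i)\leq K'_j$, apply the hypothesis together with strong closure of $K'_j$ in $\mathcal F'$ to conclude $\tilde\varphi_i(Q)\leq K'_j$, and pass to the limit using that $K'_j$ is closed. But this argument is itself a miniature of the proposition being proved, and it uses divisibility (closure of Hom-sets under restriction) and the surjective normalization in an essential way. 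As written, the claim that ``once this bookkeeping is in place, the remainder is essentially formal'' hides the actual mathematical content of the proof inside the bookkeeping.
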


\section{Block pro-fusion systems for countably based profinite groups}

In this section we will attach a pro-saturated pro-fusion system to a block of a countably based profinite group. By Corollary~\ref{corol can take G countably based} this effectively encompasses all blocks of arbitrary profinite groups with countably based defect groups. The definition will resemble the definition in the finite case, but the relationship between the pro-fusion system of a block of a profinite group and the fusion systems of the corresponding blocks of the finite quotients is not as straightforward as one might expect. In particular, one cannot define
this pro-fusion system as the inverse limit of the fusion systems of the corresponding blocks of finite quotients. The latter simply do not fit into an inverse system.

On the level of finite groups, the main ingredients needed are Lemmas~\ref{lemma nu minus}~and~\ref{lemma fusion system grows} below.
Lemma~\ref{lemma nu minus} summarizes what happens to block idempotents and Brauer pairs under taking quotients. This then feeds into Lemma~\ref{lemma fusion system grows}, which describes how fusion systems of the finite-dimensional quotients of the block fit together. This will allow us to construct inverse systems of fusion systems. The crucial assumption in both Lemma~\ref{lemma nu minus}~and~\ref{lemma fusion system grows} is that we only consider quotients $G/N$ of $G$ such that $N\cap Q$ is a Sylow $p$-subgroup of $N$, where $Q$ is some $p$-subgroup of $G$ (e.g. the defect group of a block). While this assumption looks rather arbitrary for finite groups, when looking at quotients of profinite groups it will translate to asking that $N$ be sufficiently small, which is a natural assumption in the profinite context.

    \begin{lemma}\label{lemma nu minus}
        Let $G$ be a finite group and let $N\unlhd G$ be a normal subgroup. Let $
            \nu:\ kG \longrightarrow kG/N
        $
        denote the natural epimorphism. For a $p$-subgroup $Q$ of $G$ such that $Q\cap N$ is a Sylow $p$-subgroup of $N$, define
        \begin{equation}
        C_{Q,N}=\left\{ g \in G \ :\ [g,Q] \subseteq Q \cap N  \right\}.
        \end{equation}
        For each such $Q$ there is a map
        \begin{equation}
            \nu^-_Q: \{\textrm{ prim. idempot. of $Z(kC_{G/N}(QN/N))$ }\} \longrightarrow 
            \{\textrm{ prim. idempot. of $Z(kC_{G}(Q))^{C_{Q, N}}$ }\}
        \end{equation}
         such that the following hold:
        \begin{enumerate}
            \item If $e$ is a primitive idempotent in $Z(kC_{G/N}(QN/N))$, then  \begin{equation}
                \nu(\nu^-_Q(e))\cdot e=e,\label{eqn defining nuQ}
            \end{equation}
            and this property uniquely determines $\nu^-_Q(e)$.
            Moreover, $\nu^-_Q$ is $G$-equivariant, that is, 
            \begin{equation}
                \nu^-_Q(e)^g=\nu^-_{Q^g}(e^g) \quad \textrm{ for all $g\in G$. }
            \end{equation}
            \item If $P\leq Q$ are two $p$-subgroups of $G$ such that $P\cap N$ is a Sylow $p$-subgroup of $N$, and $(PN/N,c)\leq (QN/N,d)$ are two Brauer pairs for $kG/N$, then 
            for any two Brauer pairs $(P,\tilde c)$ and $(Q,\tilde d)$ such that $\nu^-_P(c)\cdot \tilde c \neq 0$ and $\nu^-_Q(d)\cdot \tilde d \neq 0$ there is an $x\in C_{P,N}$ such that 
            \begin{equation}
                (P, \tilde c^x) \leq (Q, \tilde d)
            \end{equation} 
            as Brauer pairs for $kG$.
        \end{enumerate}
    \end{lemma}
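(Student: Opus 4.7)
My plan is to treat the two parts in sequence. For part (1), the main step is to verify that $\nu$ restricts to an algebra homomorphism
$$Z(kC_G(Q))^{C_{Q,N}} \longrightarrow Z(kC_{G/N}(QN/N)).$$
For $\nu(z)$ to lie in the full centre, I need every $\bar h\in C_{G/N}(QN/N)$ to lift to an element of $C_{Q,N}$, so that it commutes with $\nu(z)$ via the $C_{Q,N}$-invariance of $z$. This surjectivity of $C_{Q,N}\to C_{G/N}(QN/N)$ is where the Sylow hypothesis enters by a Frattini-style argument: any lift $h\in G$ of $\bar h$ satisfies $[h,Q]\subseteq N$ and hence normalises $QN$; the assumption $Q\cap N\in\mathrm{Syl}_p(N)$ forces $Q\in\mathrm{Syl}_p(QN)$, so $Q$ and $h^{-1}Qh$ are $N$-conjugate in $QN$, and I can replace $h$ by an element of $N_G(Q)\cap C_{Q,N}$ in the same $N$-coset. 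Once the central-to-central map is in place, existence and uniqueness of $\nu^-_Q(e)$ follow formally: writing $1=\sum_i\tilde e_i$ as a sum of orthogonal primitive idempotents in $Z(kC_G(Q))^{C_{Q,N}}$, the identity $e=\sum_i e\,\nu(\tilde e_i)$ is an orthogonal decomposition of the primitive $e$, so exactly one summand equals $e$ and $\nu^-_Q(e)$ is forced to be the corresponding $\tilde e_i$. The equivariance $\nu^-_Q(e)^g=\nu^-_{Q^g}(e^g)$ is then immediate from uniqueness, since conjugation by $g\in G$ is an algebra isomorphism compatible with $\nu$ that carries the pair $(Q,e)$ to $(Q^g,e^g)$.

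For part (2), I will use the characterisation of ``$\leq$'' via Brauer maps supplied by Proposition~\ref{prop partial order}. After reducing from general $P\leq Q$ to the normal case $P\unlhd Q$ via the chain definition, the relation $(P,\tilde c')\leq (Q,\tilde d)$ is equivalent to $\tilde c'$ being a $Q$-stable block idempotent of $Z(kC_G(P))$ with $\Br_Q(\tilde c')\tilde d=\tilde d$, and there is a \emph{unique} such primitive $\tilde c'$. By the construction in part (1), the primitive idempotents $\tilde c$ of $Z(kC_G(P))$ with $\nu^-_P(c)\tilde c\neq 0$ are exactly the primitive summands of $\nu^-_P(c)$, and since $\nu^-_P(c)$ is primitive in $Z(kC_G(P))^{C_{P,N}}$ they form a single $C_{P,N}$-orbit. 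Thus the conclusion reduces to showing $\nu^-_P(c)\tilde c'\neq 0$, for then transitivity of the $C_{P,N}$-action yields the desired $x\in C_{P,N}$ with $\tilde c^x=\tilde c'$, and $(P,\tilde c^x)\leq(Q,\tilde d)$ follows. To prove $\nu^-_P(c)\tilde c'\neq 0$, I will apply $\nu$ to the identity $\Br_Q(\tilde c')\tilde d=\tilde d$ and combine it with the Brauer-map identity for $(PN/N,c)\leq(QN/N,d)$ in $kG/N$ together with the hypothesis $\nu^-_Q(d)\tilde d\neq 0$, to pin down $\nu(\tilde c')$ as lying in the $\nu^-_P$-block corresponding to $c$.

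The main obstacle will be the compatibility of $\nu\circ\Br_Q$ with $\Br_{QN/N}\circ\nu$ on the central elements at hand. These two maps are not literally equal: $\Br_Q$ discards every term supported outside $C_G(Q)$, whereas $\Br_{QN/N}\circ\nu$ retains contributions from the full preimage of $C_{G/N}(QN/N)$ in $G$. The Sylow hypothesis, which together with $P\leq Q$ forces $P\cap N=Q\cap N$ (both being Sylow $p$-subgroups of $N$), is precisely what allows the discrepancy to be absorbed: the ``extra'' terms lie in the augmentation ideal associated to the $p$-group $Q\cap N$ and are nilpotent, hence do not affect the idempotent equations. Managing this radical correction carefully, together with the reduction from general inclusions to the normal case via chains, will be the technical heart of the argument.
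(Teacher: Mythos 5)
Your part (1) is essentially the paper's argument and is fine: the Frattini-style lifting of $C_{G/N}(QN/N)$ into $C_{Q,N}$ (using that $Q\in\mathrm{Syl}_p(QN)$ under the hypothesis on $Q\cap N$) gives well-definedness of $\nu$ on $C_{Q,N}$-invariants, and the orthogonal-decomposition argument then forces existence, uniqueness and equivariance of $\nu^-_Q$.

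The gap is in the step you yourself identify as the technical heart of part (2). The compositions $\nu\circ\Br_Q$ and $\Br_{QN/N}\circ\nu$ do not differ by a nilpotent error to be ``absorbed''; on $Q$-fixed elements of $kC_G(P)$ they agree \emph{exactly}, and the reason is not the Sylow hypothesis but a characteristic-$p$ orbit count: if $g\in C_G(P)\setminus C_G(Q)$ satisfies $gN\in C_{G/N}(QN/N)$, its $Q$-orbit has size divisible by $p$ and every element of that orbit maps to the same element $gN$, so $\nu$ sends the $Q$-orbit sum of $g$ to $|g^Q|\cdot gN=0$. Your proposed mechanism would not suffice even if the discrepancy were nilpotent: the argument must pin down $\nu^-_P(c)$ as \emph{the unique} primitive idempotent $E$ of $Z(kC_G(P))^{C_{P,N}Q}$ with $\Br_Q(E)\cdot\nu^-_Q(d)\neq 0$, and a nonzero nilpotent perturbation can change whether a product of idempotents vanishes, so that uniqueness/orthogonality bookkeeping collapses. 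Relatedly, your plan to apply $\nu$ to $\Br_Q(\tilde c')\tilde d=\tilde d$ pushes forward idempotents ($\tilde c'$, $\tilde d$) that are only $Q$-stable, not $C_{P,N}$- resp.\ $C_{Q,N}$-invariant, so their images need not lie in $Z(kC_{G/N}(PN/N))$ resp.\ $Z(kC_{G/N}(QN/N))$, which is exactly where the uniqueness statements for Brauer pairs of $kG/N$ live. The paper argues in the opposite direction: it pushes the invariant idempotents $\nu^-_P(c)$ and $\nu^-_Q(d)$ down through the (exactly commuting) square to deduce $\Br_Q(\nu^-_P(c))\cdot\nu^-_Q(d)\neq 0$ from the nonvanishing in $kG/N$, and then identifies $\nu^-_P(c)$ with the $C_{P,N}$-orbit sum of the canonical $\tilde c'$ by uniqueness inside $Z(kC_G(P))^{C_{P,N}Q}$. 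Finally, in the chain reduction you should record that each conjugator lies in $C_{P_i,N}\leq C_{P,N}$ (using $P_i\cap N=P\cap N$), so that the composite $x$ actually lands in $C_{P,N}$ as the statement requires.
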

    \begin{proof}
        Before we start we should point out that $C_G(Q)\leq C_{Q,N}\leq N_G(Q)$, so $Q$ is normalized by $C_{Q,N}$. It will become clear below that our assumptions imply $C_{Q,N}N/N=C_{G/N}(QN/N)$.

        Now let $P\unlhd Q$ be two $p$-subgroups of $G$ such that $P\cap N$ is a Sylow $p$-subgroup of $N$. Note that $[C_{Q,N}, P]\subseteq Q \cap N=P\cap N$, so $C_{Q,N}\leq C_{P,N}$.
        Note also that for $q\in Q$ we have 
        \begin{equation}
        {C_{P,N}}^q=\left\{ g \in G \ :\ [g^{q^{-1}},P] \subseteq P \cap N\right\} = \left\{ g \in G \ :\ [g,P^q] \subseteq P^q \cap N\right\}= C_{P,N}. 
        \end{equation}
        That is, $Q$ normalises $C_{P,N}$, which implies that $QC_{P,N}=C_{P,N}Q$ is a group.
        We claim that there is a commutative diagram
        \begin{equation}\label{eqn comm diagram brauer maps}
            \xymatrix{
                Z(kC_G(P))^{C_{P,N}Q} \ar[rr]^{\Br_Q} \ar@{->}[d]_\nu && Z(kC_G(Q))^{C_{Q,N}} \ar@{->}[d]^\nu \\
                Z(kC_{G/N}(PN/N))^Q \ar[rr]^{\Br_{QN/N}}&& Z(kC_{G/N}(QN/N)) 
            }
        \end{equation}
        where $\Br_Q$ and $\Br_{QN/N}$ denote the respective Brauer maps. The $\nu$'s in this diagram are just the restriction of the natural epimorphism $\nu$ from the statement of the lemma. However, these restrictions are typically not surjective, and well-definedness is not clear. 
        
        To show that the vertical maps are well-defined, we will just show that  $\nu(Z(kC_G(P))^{C_{P,N}}) \subseteq  Z(kC_{G/N}(PN/N)) $, to avoid having to show the same thing for both vertical arrows. Since $\nu$ is clearly $Q$-equivariant, it will then send $Q$-invariants to $Q$-invariants.
        Note that $C_G(P)\unlhd C_{P,N}$, and therefore $Z(kC_G(P))^{C_{P,N}}$ is spanned by elements of the form 
        \begin{equation}
            \widehat g = \sum_{x\in g^{C_{P,N}}} x \quad \textrm{for $g\in C_G(P)$.}
        \end{equation} 
        Take such a $\widehat g$ as well as an $hN\in C_{G/N}(PN/N)$. Our aim is to show that $\nu(\widehat g)$ commutes with $hN$. We have $P^hN=PN$, since $hN$ centralises $PN/N$. Now $P$ is a Sylow $p$-subgroup of $PN$, since $|PN|=|PN/N||N|=|P/P\cap N||N|$ and $|P|=|P/P\cap N||P\cap N|$, implying that the index of $P$ in $PN$ is equal to the index of $P\cap N$ in $N$, which is coprime to $p$ by assumption ($P\cap N$ is a Sylow $p$-subgroup of $N$). Since it has the same order as $P$, the group $P^h$ is also a Sylow $p$-subgroup of $PN$. So $P$ and $P^h$ are conjugate within $PN$. This means there is an $x\in P$ and an $n\in N$ such that $P^{xn^{-1}}=P^h$, which implies $P^{hn}=P$. It follows that $[hn, P]\subseteq P$. Since $hN\in C_{G/N}(PN/N)$ it follows that $[hn, P]\subseteq N$, and therefore $hn\in C_{P,N}$. So $\widehat g^{hn}=\widehat g$, which implies $\nu (\widehat g)^{hN}=\nu(\widehat g)$. Since  $hN\in C_{G/N}(PN/N)$ was arbitrary it follows that $\nu (\widehat g)\in   Z(kC_{G/N}(PN/N))$. The same holds with $P$ replaced by $Q$.
        
        The top horizontal arrow is also well-defined, since $\Br_Q$ as defined in \eqref{eqn def Brauer map} is clearly $N_G(Q)\cap N_G(P)$-equivariant, and $C_{Q,N}\leq N_G(Q)\cap N_G(P)$. Since $C_{Q,N}\leq C_{P,N}$ we have $Z(kC_G(P))^{C_{P,N}Q}\subseteq Z(kC_G(P))^{QC_{Q,N}}$.
        
        One can check commutativity directly by verifying commutativity of 
        \begin{equation}
            \xymatrix{
                kC_G(P)^Q \ar[rr]^{\Br_Q} \ar@{->}[d]_\nu && kC_G(Q) \ar@{->}[d]^\nu \\
                kC_{G/N}(PN/N)^Q \ar[rr]^{\Br_{QN/N}}&& kC_{G/N}(QN/N), 
            }
        \end{equation}
        which is classical. Concretely, assume $gN\in C_{G/N}(QN/N)$, $g\in C_G(P)$ but $g\not\in C_G(Q)$.
         Pick $q\in Q$ with $g^q\neq g$. Then $\nu$ sends the $Q$-orbit sum of $g$ to the $Q$-orbit sum of $\sum_{x\in g^{\langle q \rangle}} x$, but the latter maps to $|g^{\langle q \rangle}|\cdot gN=0$.  So $\nu$ maps the $Q$-orbit sum of $g$ to  zero. 

        Now let us define $\nu^-_Q$.
        If $1=e_1+\ldots+e_n$ is a decomposition of $1$ as a sum of primitive orthogonal idempotents in $Z(kC_G(Q))^{C_{Q,N}}$, then $1=\nu(e_1)+\ldots +\nu(e_n)$ is a decomposition of $1$ as a sum of orthogonal (but not necessarily primitive) idempotents in $Z(kC_{G/N}(QN/N))$. Therefore, given any primitive idempotent $c\in Z(kC_{G/N}(QN/N))$ there exists a unique $i$ such that $\nu(e_i)\cdot c \neq 0$, and then necessarily $\nu(e_i)\cdot c = c$. We define 
        \begin{equation}
            \nu_Q^{-}(c) := e_i.
        \end{equation}
        This defines map from primitive idempotents of $Z(kC_{G/N}(QN/N))$ to primitive idempotents of  $Z(kC_G(Q))^{C_{Q,N}}$, and the equality \eqref{eqn defining nuQ} is clearly satisfied and uniquely determines $\nu^-_Q$. It is also clear that $\nu_Q^-$ is $G$-equivariant. 
        
        It remains to prove the second part of our assertion. Let us first assume, as we did above, that $P$ is normal in~$Q$.
        By \cite[Proposition 6.3.4]{LinckelmannVolII}~or~\cite[Theorem 3.4]{AlperinBroue} $(PN/N,c)\leq(QN/N,d)$ is equivalent to $c$ being the unique $Q$-stable primitive idempotent in $Z(kC_G(PN/N))$ with $\Br_{QN/N}(c)\cdot d \neq 0$. 
        Now, due to $G$-equivariance of $\nu^-_P$, the idempotent $\nu^-_P(c)\in Z(kC_G(P))^{C_{P,N}}$ is automatically $Q$-invariant, that is,  $\nu^-_P(c)\in Z(kC_G(P))^{C_{P,N}Q}$. Moreover, using the commutative diagram \eqref{eqn comm diagram brauer maps}
        \begin{equation}
            \nu(\Br_Q(\nu^-_P(c)) \cdot \nu^-_Q(d))= \nu (Br_Q(\nu^-_P(c))) \cdot \nu(\nu^-_Q(d))= \Br_{QN/N}(\nu(\nu^-_P(c)))\cdot \nu(\nu^-_Q(d)).
        \end{equation} 
        Now, using $ c\cdot \nu(\nu^-_P(c)) =c$ and the analogous fact for $d$,
        \begin{equation}
            \begin{array}{rcl}
            \Br_{QN/N}(\nu(\nu^-_P(c)))\cdot \nu(\nu^-_Q(d)) \cdot \Br_{QN/N}(c)\cdot d &=& 
            \Br_{QN/N}(c \cdot \nu(\nu^-_P(c)))\cdot d\cdot \nu(\nu^-_Q(d)) \\
            &=& \Br_{QN/N}(c)\cdot d \\ &\neq& 0,
            \end{array}
        \end{equation}
        which implies that $\nu(\Br_Q(\nu^-_P(c)) \cdot \nu^-_Q(d))\neq 0$, since the above was obtained by multiplying this by another expression, namely $ \Br_{QN/N}(c)\cdot d$. 
        It follows that $\Br_Q(\nu^-_P(c)) \cdot \nu^-_Q(d)\neq 0$.

        Now $\nu^-_Q(d)$ is necessarily the sum over the $C_{Q,N}$-orbit of $\tilde d$. We also know by \cite[Proposition 6.3.4]{LinckelmannVolII} that there exists a $\tilde c'\in Z(kC_G(P))^Q$, primitive in $Z(kC_G(P))$, such that $\Br_Q(\tilde c') \cdot \tilde d \neq 0$.
        Since $\tilde c'$ is primitive and $Q$-invariant, its $C_{P,N}Q=QC_{P,N}$-orbit is actually the same as its $C_{P,N}$-orbit (the subtlety here is that $C_{P,N}Q$ does not act on $Z(kC_G(P))^Q$, but it does act on $Z(kC_G(P))$, and $\tilde c'$ happens to be a primitive idempotent in both). So let us denote the sum over the $C_{P,N}$-orbit of $\tilde c'$ by $\tilde c''$. Then $\tilde c''$ is a primitive idempotent in $Z(kC_G(P))^{C_{P,N}Q}$ and $\Br_Q(\tilde c'')\cdot \nu^-_Q(d) \neq 0$. By uniqueness, we must have $\tilde c''=\nu^-_P(c)$, which means that $\nu^-_P(c)$ is a sum of all $C_{P,N}$-conjugates of $\tilde c'$. In particular, $\tilde c$ is a $C_{P,N}$-conjugate of $\tilde c'$, which proves the assertion, although still under the assumption $P\unlhd Q$.
        
        If $P$ is not normal in $Q$, then there exists a chain $P=P_1\unlhd P_2\unlhd \ldots \unlhd P_r = Q$. By \cite[Theorem 6.3.3]{LinckelmannVolII} there exist unique primitive idempotents $c_i \in Z(kC_{G/N}(P_iN/N))$ such that $(P_iN/N,c_i)\leq (P_{i+1}N/N,c_{i+1})$ for all $1\leq i \leq r-1$ and $(P_rN/N,c_r)=(QN/N,d)$. By uniqueness it follows that $(P_1N/N,c_1)=(PN/N,c)$. For each of the $P_i$ for $2\leq i \leq r-1$ we can pick a primitive idempotent $\tilde c_i\in Z(kC_G(P_i))$ such that $\nu^-_{P_i}(c_i)\cdot \tilde c_i\neq 0$. Set $\tilde c_1=\tilde c$ and $\tilde c_r=\tilde d$. Then we already saw that for each $1\leq i < r$ there is an $x_i\in C_{P_i,N}\leq C_{P,N}$ such that $(P_i,\tilde c_i^{x_i})\leq (P_{i+1},\tilde c_{i+1})$. But then our claim holds with $x=x_1\cdots x_{r-1}$.
    \end{proof}

    The next lemma shows that, under suitable hypotheses, the fusion systems of blocks of $G/N$ and those of blocks of $G$ fit together nicely as long as we choose Brauer pairs compatibly.
    
    \begin{lemma}\label{lemma fusion system grows}
        Let $G$ be a finite group, $N \unlhd G$ a normal subgroup and $D\leq G$ a $p$-subgroup. Assume that $N\cap D$ is a Sylow $p$-subgroup of $N$, and that $b\in Z(kG/N)$ is a block idempotent such that $kG/Nb$ has defect group $DN/N$ and $kG\tilde b$  has defect group $D$, where $\tilde b\in Z(kG)$ is the unique block idempotent such that $\nu(\tilde b)\cdot b =b$. Let $(DN/N,e)$ be a maximal $(G/N, b)$-Brauer pair.
        \begin{enumerate}
        \item  If $\tilde e$ is a primitive idempotent in $Z(kC_G(D))$ such that $\nu^-_D(e)\cdot \tilde e \neq 0$, then $(D,\tilde e)$ is a maximal $(G,\tilde b)$-Brauer pair and
        \begin{equation}
            \mathcal F_{(DN/N,e)}(G/N,b) \leq \mathcal F_{(D,\tilde e)}(G, \tilde b)/D\cap N,
        \end{equation}
        where both are viewed as fusion systems on $DN/N$. 
        \item If $\tilde e'$ is another primitive idempotent in $Z(kC_G(D))$ such that $\nu^-_D(e)\cdot \tilde e' \neq 0$, then 
        \begin{equation}
            \mathcal F_{(D,\tilde e)}(G, \tilde b)/D\cap N = \mathcal F_{(D,\tilde e')}(G, \tilde b)/D\cap N.
        \end{equation}
        \end{enumerate}
    \end{lemma}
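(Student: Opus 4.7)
The overall approach is to use Lemma~\ref{lemma nu minus}(2) as the main engine, transferring Brauer-pair relations from $kG/N$ back to $kG$. For part~(1), I would first verify that $(D,\tilde e)$ is a $(G,\tilde b)$-Brauer pair by showing $\Br_D(\tilde b)\cdot\tilde e = \tilde e$. A mild extension of the commutative-diagram argument from the proof of Lemma~\ref{lemma nu minus} — extending $\nu\circ\Br_D = \Br_{DN/N}\circ\nu$ to all of $Z(kG)$, using that $D$-orbit sums of elements $g\in G\setminus C_G(D)$ with $gN\in C_{G/N}(DN/N)$ are killed by $\nu$ (because $[D:C_D(g)]$ is divisible by $p$) — gives $\nu(\Br_D(\tilde b))\cdot e = \Br_{DN/N}(\nu(\tilde b))\cdot e = e$. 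Since $\Br_D(\tilde b)\in Z(kC_G(D))^{C_{D,N}}$, its decomposition into primitive idempotents of that subring must include $\nu^-_D(e)$, which is the unique such primitive whose image under $\nu$ does not annihilate $e$. Hence $\nu^-_D(e)\leq \Br_D(\tilde b)$, and multiplying by $\tilde e$ yields $\Br_D(\tilde b)\cdot\tilde e = \tilde e$, i.e.\ $(1,\tilde b)\leq(D,\tilde e)$. Maximality is then immediate since $D$ is by hypothesis a defect group of $kG\tilde b$.

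For the fusion system inclusion, I would first establish the auxiliary non-vanishing $\nu^-_P(e_{PN/N})\cdot e_P\neq 0$ for every $P\leq D$ containing $D\cap N$, where $e_P$, respectively $e_{PN/N}$, denotes the unique idempotent with $(P,e_P)\leq(D,\tilde e)$, respectively $(PN/N,e_{PN/N})\leq(DN/N,e)$. Indeed, pick any primitive $\tilde c\in Z(kC_G(P))$ with $\nu^-_P(e_{PN/N})\cdot\tilde c\neq 0$ and apply Lemma~\ref{lemma nu minus}(2) to $(PN/N,e_{PN/N})\leq(DN/N,e)$ to obtain $x\in C_{P,N}$ with $(P,\tilde c^x)\leq(D,\tilde e)$; uniqueness of Brauer-pair subordinates forces $\tilde c^x=e_P$, and $C_{P,N}$-invariance of $\nu^-_P(e_{PN/N})$ delivers the claim. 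Now take $\varphi\in\Hom_{\mathcal F_{(DN/N,e)}(G/N,b)}(PN/N,QN/N)$ induced by conjugation by $gN$, with $D\cap N\leq P,Q\leq D$. Since $Q$ is a Sylow $p$-subgroup of $QN$ (because $[QN:Q]=[N:D\cap N]$ is coprime to $p$), we can adjust $g$ by an element of $N$ so that $P^g\leq Q$ without altering $\varphi$. Applying Lemma~\ref{lemma nu minus}(2) to $(P^gN/N,e_{P^gN/N})\leq(QN/N,e_{QN/N})$ with lifts $\tilde c=e_P^g$ and $\tilde d=e_Q$ — whose non-vanishing hypotheses follow from the preceding step together with the $G$-equivariance of $\nu^-$ — yields $x\in C_{P^g,N}$ with $(P^g,e_P^{gx})\leq(Q,e_Q)$. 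Uniqueness forces $e_P^{gx}=e_{P^g}$, so conjugation by $gx$ is a morphism $P\to Q$ in $\mathcal F_{(D,\tilde e)}(G,\tilde b)$. Since $x\in C_{P^g,N}$ acts trivially modulo $N$ on $P^g$ and the canonical map $D/(D\cap N)\to DN/N$ is an isomorphism, this lifted morphism descends to $\varphi$ in the quotient fusion system.

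For part~(2), $\nu^-_D(e)$ is a primitive idempotent of $Z(kC_G(D))^{C_{D,N}}$ and hence a sum over a single $C_{D,N}$-orbit of primitive idempotents of $Z(kC_G(D))$; both $\tilde e$ and $\tilde e'$ lie in this orbit, so $\tilde e'=\tilde e^y$ for some $y\in C_{D,N}$. Since $y$ normalises every $R\leq D$ containing $D\cap N$ (because $[y,R]\subseteq D\cap N\subseteq R$), $G$-equivariance of the Brauer-pair order together with uniqueness of subordinates yields $e_R'=e_R^y$ for all such $R$, where the primes denote the Brauer-pair subordinates of $(D,\tilde e')$. Given a morphism induced by conjugation by $g$ from $P$ to $Q$ in $\mathcal F_{(D,\tilde e')}(G,\tilde b)$ with $D\cap N\leq P,Q$, setting $h=ygy^{-1}$ and conjugating $(P^g,(e_P')^g)\leq(Q,e_Q')$ by $y^{-1}$ produces $(P^h,e_P^h)\leq(Q,e_Q)$, so conjugation by $h$ is a morphism $P\to Q$ in $\mathcal F_{(D,\tilde e)}(G,\tilde b)$. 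Since $y$ acts trivially on $D$ modulo $N$, the two conjugation morphisms agree modulo $D\cap N$ on $P$; swapping the roles of $\tilde e$ and $\tilde e'$ yields the reverse inclusion and hence equality.

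The main technical obstacle I anticipate is the first step in part~(1): Lemma~\ref{lemma nu minus}(2) cannot be applied directly with $P=1$ when $N$ has non-trivial $p$-part, so one must first extend the commutative diagram relating $\Br$ and $\nu$ to all of $Z(kG)$ before one can compare the primitive-idempotent decompositions of $\Br_D(\tilde b)$ and $\nu^-_D(e)$. The remaining steps are a careful orchestration of Lemma~\ref{lemma nu minus}(2), $G$-equivariance of $\nu^-$, and the isomorphism $D/(D\cap N)\cong DN/N$.
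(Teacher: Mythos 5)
Your proposal is correct and follows essentially the same route as the paper's proof: establish $(1,\tilde b)\leq (D,\tilde e)$ via the commutativity $\nu\circ\Br_D=\Br_{DN/N}\circ\nu$ on $Z(kG)$ together with the relation between $\Br_D(\tilde b)$ and $\nu^-_D(e)$ (the paper phrases this as a contradiction argument, you argue directly, but the content is identical), then prove $\nu^-_P(e_{PN/N})\cdot\tilde e_P\neq 0$ for all $D\cap N\leq P\leq D$, adjust $g$ by an element of $N$ using that $Q$ is a Sylow $p$-subgroup of $QN$, invoke Lemma~\ref{lemma nu minus}(2) to correct by an element of $C_{P^g,N}$, and handle part (2) by conjugating by the element $y\in C_{D,N}$ with $\tilde e'=\tilde e^y$. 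All steps check out, so no further comment is needed.
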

    \begin{proof}
        Let us make a few remarks before we start. The map $\nu: Z(kG) \longrightarrow Z(kG/N)$ is well-defined and $D$ acts trivially on domain and range. Therefore $\nu\circ \Br_D=\Br_{DN/N}\circ \nu$ (this is diagram~\eqref{eqn comm diagram brauer maps} with $P=\{1\}$ and $Q=D$). Note that the assumption  $\nu^-_D(e)\cdot \tilde e \neq 0$ implies  $\nu^-_D(e)\cdot \tilde e =\tilde e$, and $\tilde e$ is a primitive idempotent in $Z(kC_G(D))$ whilst $\nu^-_D(e)$ is a primitive idempotent in $Z(kC_G(D))^{C_{D,N}}\subseteq Z(kC_G(D))$. So $\nu^-_D(e)$ is the sum of all $C_{D,N}$-conjugates of $\tilde e$. In particular, if $\Br_D(\tilde b)\cdot \tilde e=0$, then $\Br_D(\tilde b)\cdot \nu_D^-(e)=0$, as $\tilde b$ is invariant under conjugation by $C_{D,N}$ (or by $G$, for that matter).
        
        Now let us show that $(1N/N, b)\leq (DN/N, e)$ implies  $(1,\tilde b)\leq (D,\tilde e)$. Assume by way of contradiction that $\Br_D(\tilde b)\cdot \tilde e=0$. By the above it follows that $\Br_D(\tilde b)\cdot \nu_D^-(e)=0$ and therefore 
        $$\nu(\Br_D(\tilde b)\cdot \nu_D^-(e))=\Br_{DN/N}(\nu(\tilde b))\cdot \nu(\nu_D^-(e))=0.$$
        The assumption $(1N/N, b)\leq (DN/N, e)$ means that $\Br_{DN/N}(b)\cdot e \neq 0$. Multiplying the above by ${\Br_{DN/N}(b)\cdot e}$ implies $\Br_{DN/N}(b)\cdot e=0$, since $\nu(\tilde b)\cdot b=b$ and $\nu(\nu_D^-(e))\cdot e=e$. This is a contradiction. Since $D$ is a defect group of $kG\tilde b$ by assumption, it is now also clear that $(D,\tilde e)$ is a maximal Brauer pair. 

        For each  $D\cap N\leq  P\leq D$ take the (unique) primitive idempotent $e_{PN/N}\in Z(kC_{G/N}(PN/N))$ such that $(PN/N, e_{PN/N})\leq (DN/N, e)$. By Lemma~\ref{lemma nu minus}, the unique primitive idempotent $\tilde e_P \in Z(kC_G(P))$ such that $(P,\tilde e_P)\leq (D,\tilde e)$ satisfies $\nu^-_P(e_{PN/N})\cdot \tilde e_P\neq 0$. 
        
        Now consider two subgroups $D\cap N \leq P\leq Q\leq D$. The elements of $\Hom_{\mathcal F_{(DN/N,e)}(G/N,b)}(PN/N,QN/N)$ are, by definition, group homomorphisms induced by conjugation by elements $g\in G$ such that $P^gN/N\leq QN/N$ and $e_{PN/N}^g= e_{P^gN/N}$. Since $Q\cap N=D\cap N$ is a Sylow $p$-subgroup of $N$, we have $|QN|=|Q/N\cap Q||N|$ and $|Q|=|Q/N\cap Q||N\cap Q|$ and therefore $Q$ is a Sylow $p$-subgroup of $QN$. Since $P^g$ is a $p$-subgroup of $QN$, we can find $n\in N$ and $x\in Q$ such that $P^{g}\leq Q^{xn^{-1}}$, which implies $P^{gn}\leq Q$. In particular $\nu^-_{P^{gn}}(e_{P^{gn}N/N})\cdot \tilde e_{P}^{gn}\neq 0$ by $G$-equivariance of $\nu^-_P$. Now Lemma~\ref{lemma nu minus} guarantees the existence of an $x\in C_{P^{gn},N}$ such that $(P^{gn},\tilde e_{P}^{gnx})\leq (Q, \tilde e_Q)$, or, equivalently, $\tilde e_{P}^{gnx}=\tilde e_{P^{gnx}}$. Note that conjugation by $g$ and conjugation by $gnx$ induce the same group homomorphism from $PN/N$ to $QN/N$.
        
        The elements of 
        $\Hom_{ \mathcal F_{(D,\tilde e)}(G, \tilde b)/D\cap N}(PN/N, QN/N)$ are, by definition, group homomorphisms induced by conjugation by elements $h\in G$ such that $P^h\leq Q$ and $\tilde e_{P}^h=\tilde e_{P^h}$. In particular, the element $gnx$ from the previous paragraph induces an element of   $\Hom_{ \mathcal F_{(D,\tilde e)}(G, \tilde b)/D\cap N}(PN/N, QN/N)$. This proves that $\mathcal F_{(DN/N,e)}(G/N,b) \leq \mathcal F_{(D,\tilde e)}(G, \tilde b)/D\cap N$.

        If $\tilde e'$ is another primitive idempotent in $Z(kC_G(D))$ such that $\nu^-_D(e)\cdot \tilde e' \neq 0$, then $\tilde e' = \tilde e^x$ for some $x\in C_{D,N}$. It follows that $h$ induces an element of $\Hom_{ \mathcal F_{(D,\tilde e)}(G, \tilde b)/D\cap N}(PN/N, QN/N)$  if and only if $x^{-1}hx$ induces an element of $\Hom_{ \mathcal F_{(D,\tilde e')}(G, \tilde b)/D\cap N}(PN/N, QN/N)$. But $h$ and $x^{-1}hx$ induce the same group homomorphism from $PN/N$ to $QN/N$. So the second part of our claim follows.
    \end{proof}

    We are now ready to look at blocks of profinite groups. We restrict ourselves to countably based profinite groups, that is, groups $G$ such that $G=\varprojlim_{i\in\N} G/N_i$ for a chain of open normal subgroups $N_i$ intersecting in $1$. For technical reasons we choose a particular chain below, but we will see in Proposition~\ref{prop independence of inverse system} that the block pro-fusion systems we construct are independent of this choice.

    \begin{notation}\label{notation setup}
        Let $G$ be a countably based profinite group, and fix a chain 
        $$N_1\geq N_2 \geq N_3\geq \ldots$$
        of open normal subgroups of $G$ such that $\bigcap_{i\in \N} N_i=1$. Let $b\in Z(k\dbl G\dbr )$ be a block idempotent and let $D$ be a defect group for $k\dbl G\dbr b$.
    \end{notation}
    We will keep this setup and notation for the rest of this section. We are now ready to define Brauer pairs for profinite groups, although the definition does not match what one would naively expect. We address this briefly in Section~\ref{section Brauer pair naive}.

    \begin{defi}\label{defi brauer pair profinite}
        A \emph{Brauer pair} for $k\dbl G\dbr $ is a pair $(P, \hat e)$, where 
        \begin{enumerate}
        \item $P$ is an open subgroup of a Sylow $p$-subgroup of $G$,
        \item $\hat e = [(e_i)_{i\in \N}]_\sim$ is an equivalence class of sequences of primitive idempotents $e_i \in Z(kC_{G/N_i}(PN_i/N_i))$, where we say $\hat e \sim \hat e'$ if  $e_i=e_i'$ for all but finitely many $i$. 
        
        We require that, for all but finitely many $i$ such that $N_i\cap P$ is a Sylow $p$-subgroup of $N_i$, we have
        \begin{equation}\label{eqn brauer pair idempots}
        \nu^{-}_{PN_{i+1}/N_{i+1}}(e_i)\cdot e_{i+1}\neq 0.
        \end{equation}
        
        \end{enumerate}
        We say $(P, \hat e)\leq (Q,\hat f)$ if  $(PN_i/N_i, e_i)\leq (QN_i/N_i,f_i)$ for all but finitely many $i$.
    \end{defi}

    Note that $P$ being open in a Sylow $p$-subgroup of $G$ implies that $N_i\cap P$ is a Sylow $p$-subgroup of $N_i$ for all~$i$ sufficiently large. The condition is required in order that the map $\nu^{-}_{PN_{i+1}/N_{i+1}}$ appearing in equation~\eqref{eqn brauer pair idempots} is defined.


 \begin{remark}\label{remark blocks}
    By \cite[Corollary 5.10]{FranquizMacQuarrieBrauer} there is an open normal subgroup $N_0\unlhd G$ together with a block idempotent ${b_0\in Z(kG/N_0)}$ with the following properties:
    \begin{enumerate}
        \item $\nu(b)\cdot b_0\neq 0$, where ${\nu:\ Z(k\db{G})\longrightarrow Z(kG/N_0)}$ is the natural map.
        
        \item For any open normal subgroup $N\unlhd G$ contained in $N_0$, the block $kG/Nb_N$ has defect group $DN/N$, where if ${\nu:\ Z(kG/N)\longrightarrow Z(kG/N_0)}$ is the natural map, then $b_N$ denotes the unique block idempotent in $Z(kG/N)$ such that $\nu(b_N)\cdot b_0\neq 0$.
    \end{enumerate}
    Furthermore,
    $$
        k\dbl G\dbr b=\varprojlim_{N\subseteq N_0} kG/Nb_N.
    $$
    \end{remark}
    

    \begin{defi}\label{defi brauer pairs blocks}
        Notation as above. For each $i>0$ such that $N_i\subseteq N_0$ define $b_i=b_{N_i}$, and choose $b_i$ arbitrarily for all (finitely many) other $i$. We define
        $$
            \hat b = [(b_i)_{i\in \N}]_\sim.
        $$
        We call a Brauer pair $(P,\hat e)$ a \emph{$(G,b)$-Brauer pair} if $(1,\hat b)\leq (D, \hat e)$.
    \end{defi}

    The above definition implicitly uses that $\hat b$ satisfies equation~\eqref{eqn brauer pair idempots} in Definition~\ref{defi brauer pair profinite}. This is however immediate from the definitions. 

    

    \begin{prop}\label{prop profinite brauer pairs}
        \begin{enumerate}
        \item The set of all Brauer pairs for $k\dbl G\dbr $  form a poset with a $G$-action.
        \item Let $P\leq Q\leq G$ be open in a Sylow $p$-subgroup of $G$, and let $(Q,\hat d)$ be a Brauer pair for $k\dbl G\dbr $. Then there exists a \emph{unique} Brauer pair $(P,\hat c)$ such that $(P,\hat c)\leq (Q,\hat d)$.  
        \item\label{prop profinite brauer pairs:maximal} There is a maximal $(G,b)$-Brauer pair $(D,\hat e)$ such that $(DN_i/N_i,e_i)$ is a maximal $(G/N_i, b_i)$-Brauer pair for all but finitely many $i$. If $(D,\hat e')$ is another maximal $(G,b)$-Brauer pair, then $(D, \hat e')=(D, \hat e)^g$ for some $g\in G$. 
        \end{enumerate}
    \end{prop}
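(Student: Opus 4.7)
The plan is to address the three claims in order, with (1) and (2) being relatively formal and the conjugacy in (3) being the main obstacle.

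For~(1), reflexivity, transitivity and antisymmetry of $\leq$, well-definedness on equivalence classes, and compatibility with the $G$-action (defined by $\hat e^g := [(e_i^{gN_i})_i]_\sim$) all reduce immediately to the analogous statements about Brauer pairs at each finite level $G/N_i$. For~(2), given $(Q,\hat d)$ and $P\leq Q$ open in a Sylow $p$-subgroup, I would define $c_i$ for each sufficiently large $i$ to be the unique primitive idempotent with $(PN_i/N_i, c_i) \leq (QN_i/N_i, d_i)$ (which exists by the corresponding finite-group fact), and choose the remaining $c_i$ arbitrarily. The nontrivial point is to verify \eqref{eqn brauer pair idempots}. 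Apply Lemma~\ref{lemma nu minus}(2) to the quotient $G/N_{i+1} \to G/N_i$: using $\nu^-(d_i)\cdot d_{i+1} \neq 0$ and $(PN_i/N_i, c_i)\leq (QN_i/N_i, d_i)$ as input, it produces a primitive $\tilde c$ with $\nu^-(c_i)\cdot \tilde c\neq 0$ together with an $x \in C_{PN_{i+1}/N_{i+1},\, N_i/N_{i+1}}$ such that $(PN_{i+1}/N_{i+1}, \tilde c^x)\leq (QN_{i+1}/N_{i+1}, d_{i+1})$. Finite-level uniqueness forces $\tilde c^x = c_{i+1}$, and because $\nu^-(c_i)$ is $C_{PN_{i+1}/N_{i+1},\, N_i/N_{i+1}}$-invariant this yields $\nu^-(c_i)\cdot c_{i+1} = (\nu^-(c_i)\cdot \tilde c)^x \neq 0$. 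Uniqueness of $(P,\hat c)$ is immediate from uniqueness at each finite level.

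For the existence part of~(3), invoke Remark~\ref{remark blocks} to find $i_0$ such that $DN_i/N_i$ is a defect group of $b_i$ for all $i\geq i_0$. Pick any maximal $(G/N_{i_0}, b_{i_0})$-Brauer pair $(DN_{i_0}/N_{i_0}, e_{i_0})$ and build $(e_i)_{i\geq i_0}$ by recursion: given $e_i$, take $e_{i+1}$ to be any primitive summand of the nonzero idempotent $\nu^-_{DN_{i+1}/N_{i+1}}(e_i)$. Lemma~\ref{lemma fusion system grows}(1) then guarantees that $(DN_{i+1}/N_{i+1}, e_{i+1})$ is again a maximal $(G/N_{i+1}, b_{i+1})$-Brauer pair, closing the induction; maximality of the resulting $(D, \hat e)$ in the profinite poset of $(G,b)$-Brauer pairs follows from maximality at each finite level together with $\bigcap_i PN_i = P$ for closed subgroups.

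The hard part is the conjugacy assertion, and the main obstacle is that $\nu^-$ is not injective on primitive idempotents: a $g$ satisfying $e_{i+1}^{gN_{i+1}} = e_{i+1}'$ need not satisfy $e_i^{gN_i} = e_i'$. The plan is a compactness argument inside $N_G(D)$, which is closed (hence compact) in $G$ and must contain any element conjugating $\hat e$ to $\hat e'$. For each sufficiently large $i$, set
\[
Y_i := \{\,g \in N_G(D) \ :\ e_i^{gN_i} = e_i'\,\},
\]
a closed subset of $N_G(D)$. A short Sylow correction shows each $Y_i$ is nonempty: any level-$i$ conjugator $g' \in G$ can be adjusted into $N_G(D)$ because $g'D(g')^{-1}$ and $D$ are both Sylow $p$-subgroups of $DN_i$ (for $i$ large enough that $D\cap N_i$ is Sylow in $N_i$), and elements of $D$ act trivially on $e_i$ by centrality. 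I then prove by induction on $m\geq i$ that $\bigcap_{j=i}^{m} Y_j \neq \emptyset$: for the inductive step, $G$-equivariance of $\nu^-$ applied to any $g$ in the previous intersection forces $e_m^{gN_m}$ to be a $C_{DN_m/N_m,\, N_{m-1}/N_m}$-conjugate of $e_m'$, and correcting $g$ on the right by a suitable lift $\tilde y \in N_G(D)$ of the corresponding element repairs level $m$ while leaving levels $j<m$ untouched, since the image of $\tilde y$ in each such $G/N_j$ centralises $DN_j/N_j$ (as $N_m \subseteq N_j$) and hence fixes $e_j'$. Compactness of $N_G(D)$ finally delivers $g\in\bigcap_{j\geq i} Y_j$, and any such $g$ satisfies $\hat e^g = \hat e'$.
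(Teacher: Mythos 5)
Your proposal is correct and follows essentially the same route as the paper's proof: parts (1) and (2) reduce to the finite levels with Lemma~\ref{lemma nu minus}(2) supplying condition~\eqref{eqn brauer pair idempots}, existence in (3) is the same inductive construction via Lemma~\ref{lemma fusion system grows}, and the conjugacy statement is proved by the same compactness argument, inductively correcting a conjugator at level $m$ by an element of $C_{DN_m/N_m,\,N_{m-1}/N_m}$ that acts trivially on all lower levels. The only (harmless) difference is that you arrange to work inside $N_G(D)$ throughout via a Sylow correction, which makes the normalisation $D^g=D$ explicit where the paper leaves it implicit in its sets $T_i\subseteq G$.
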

    \begin{proof}
        All the axioms of a partial order are immediate for ``$\leq$'', and there is a natural $G$-action. For the second point we let $c_i$ be the unique primitive idempotent in $Z(kC_{G/N_i}(PN_i/N_i))$ such that $(PN_i/N_i, c_i)\leq (QN_i/N_i, d_i)$, for each $i\in \N$. It is clear that $\hat c$ must have this form for all but finitely many $i$, so uniqueness is immediate. We just need to show that $(P,\hat c)$ is in fact a Brauer pair. If we choose $i_0$ such that $N_{i_0}\cap P$ is a Sylow $p$-subgroup of $N_{i_0}$, then Lemma~\ref{lemma nu minus} guarantees $\nu^{-}_{PN_{i+1}/N_{i+1}}(c_i)\cdot c_{i+1}\neq 0$ for all $i\geq i_0$, so the condition in Definition~\ref{defi brauer pair profinite} is satisfied.

        For the third point we first pick $i_0\in \N$ sufficiently large such that, for all $i\geq i_0$, the group $D\cap N_i$ is a Sylow $p$-subgroup of $N_i$ and $kG/N_ib_i$ has defect group $DN_i/N_i$ (see Remark~\ref{remark blocks}).
        Now we choose a maximal $(G/N_{i_0},b_{i_0})$-Brauer pair $(DN_{i_0}/N_{i_0}, e_{i_0})$ and we choose $e_i$ for $i > i_0$ inductively such that 
        $$\nu^-_{DN_{i+1}/N_{i+1}}(e_{i})\cdot e_{i+1}\neq 0.$$ Lemma~\ref{lemma fusion system grows} ensures that each $(DN_i/N_i, e_i)$ is a $(G,b_i)$-Brauer pair, and it is maximal since $DN_i/N_i$ is a defect group of $kG/N_ib_i$. We then define $\hat e=[(e_i)_{\in \N}]_\sim$, where we pick $e_i$ for $i<i_0$ arbitrarily. This shows the existence of $(D, \hat  e)$. It is also clear that this $(D, \hat  e)$ is maximal.
        
        If there is another such Brauer pair $(D,\hat e')$, then there must be some $i_1\geq i_0$ such that $(DN_{i}/N_{i}, e'_{i})$ is a $(G/N_{i}, b_{i})$-Brauer pair for all $i\geq i_1$, and these Brauer pairs will automatically be maximal since $DN_{i}/N_{i}$ is a defect group. So there is
        an $x_{i_1}\in G$ such that $e_{i_1}^{x_{i_1}}=e_{i_1}'$ , since all maximal $(G/N_{i_1},b_{i_1})$-Brauer pairs are conjugate. Assume by way of induction that for some $i\geq i_1$ we have $x_{i_1},\ldots,x_i\in G$ such that $e_j^{x_{i_1}\cdots x_i}=e'_{j}$ for all $j\leq i$. By Lemma~\ref{lemma nu minus} there is an $\bar x_{i+1}\in C_{DN_{i+1}/N_{i+1}, N_i/N_{i+1}}\leq G/N_{i+1}$ such that $e_{i+1}^{x_1\cdots x_{i+1}}=e'_{i+1}$, where $x_{i+1}$ denotes a preimage of $\bar x_{i+1}$ in $G$. Note that $C_{DN_{i+1}/N_{i+1}, N_i/N_{i+1}}N_i/N_{i}= C_{G/N_i}(DN_i/N_i)$, and therefore $e_{j}^{x_1\cdots x_{i+1}}=e'_{j}$ for all $j\leq i$ (since $e_j\in Z(kC_{G/N_j}(DN_j/N_j))$).
        Now note that the set $T_i = \{ g \in G \ :\ e_i^g=e_i'\}$ is closed for any $i$, since $T_i=T_iN_i$. We just showed that every finite intersection of $T_i$'s is non-empty. By compactness of $G$ that means the intersection of all $T_i$ is non-empty. An element $x$ in this intersection satisfies $\hat e^x=\hat e'$ by definition.
    \end{proof}

    We can now define the pro-fusion system of a block of a profinite group, analogous to the finite group case. However, it will not immediately be clear that this is an inverse limit of fusion systems on finite $p$-groups, and the objects of the category we define are only the open subgroups of a defect group rather than the closed ones. 

    \begin{defi}\label{def via brauer pairs}
        Let $(D,\hat e)$ be a maximal $(G,b)$-Brauer pair. 
        For any open subgroup $P\leq D$ let $(P, \hat e_P)$ denote the unique Brauer pair such that $(P,\hat e_P)\leq (D,\hat e)$. 
        Define a category $\mathcal F=\mathcal F_{(D,\hat e)}(G,b)$ as follows:
        \begin{enumerate}
            \item The objects of $\mathcal F$ are the open subgroups of $D$.
            \item If $P$ and $Q$ are open subgroups of $D$ we define
                $\Hom_{\mathcal F}(P,Q)$ to be the set of group homomorphisms from $P$ to $Q$ induced by conjugation by elements $g\in G$ such that $(P, \hat e_{P})^g\leq (Q,\hat e_{Q})$.
        \end{enumerate}
    \end{defi}

    Note that pro-saturated pro-fusion systems in the sense of \cite{StancuSymonds} are fully determined by their restriction to open subgroups (see the remarks in \cite[Section 4.6]{StancuSymonds}), so we do not need to give an explicit description for homomorphisms between closed subgroups. The inverse limit in Theorem~\ref{thm fusion as inverse limit} below is obviously also defined on closed subgroups, so one could extend the definition, but it would not look as clean as the one above.

    \begin{thm}\label{thm fusion as inverse limit}
        Let $(D,\hat e)$ be a maximal $(G,b)$-Brauer pair, and define $\mathcal F_i = F_{(DN_i/N_i,e_i)} (G/N_i, b_i)$.  Then there is a strictly increasing function $\mu:\ \N \longrightarrow \N$ and an $i_0\in \N$ such that 
        \begin{equation}\label{eqn block fusion as inverse limit}
            \mathcal F_{(D,\hat e)}(G, b)=\varprojlim_{i\geq i_0} \mathcal F_{\mu(i)}/((D\cap N_i)N_{\mu(i)}/N_{\mu(i)}),
        \end{equation}
        and $\mathcal F_{(D,\hat e)}(G, b)/D\cap N_i=\mathcal F_{\mu(i)}/((D\cap N_i)N_{\mu(i)}/N_{\mu(i)})$ as fusion systems on~$D/D\cap N_i$ for all $i\geq i_0$.
    \end{thm}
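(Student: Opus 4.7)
The plan is to realize $\mathcal F_{(D,\hat e)}(G,b)$ as an inverse limit by a stabilization argument at finite levels, combined with a compactness argument on $G$ to match morphism sets (modelled on the uniqueness part of Proposition~\ref{prop profinite brauer pairs}).

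First I would choose $i_0$ large enough so that, for every $i\geq i_0$: $D\cap N_i$ is a Sylow $p$-subgroup of $N_i$ (possible since $D$ is open in a Sylow $p$-subgroup of $G$), $N_i\subseteq N_0$ (so by Remark~\ref{remark blocks} the block $kG/N_i b_i$ has defect group $DN_i/N_i$), and $(DN_i/N_i, e_i)$ is a maximal $(G/N_i,b_i)$-Brauer pair. For $i_0\leq i\leq j$, Lemma~\ref{lemma fusion system grows} applied to $G/N_j$ with normal subgroup $N_i/N_j$ and $p$-subgroup $DN_j/N_j$ yields the containment
$$\mathcal F_i \ \leq\ \mathcal F_j\big/\big((D\cap N_i)N_j/N_j\big),$$
with both sides viewed as fusion systems on $D/(D\cap N_i)$. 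Fixing $i$ and letting $j$ grow produces an ascending chain of subcategories of the finite category $\mathcal F_{D/(D\cap N_i)}(G/N_i)$, which must stabilize. Define $\mu(i)$ inductively as any such stabilization index, chosen so that $\mu$ is strictly increasing.

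Next, for $i_0\leq i\leq i'$ I would check that the natural projection $G/N_{\mu(i')}\twoheadrightarrow G/N_{\mu(i)}$ descends to a morphism $\mathcal F_{\mu(i')}/((D\cap N_{i'})N_{\mu(i')}/N_{\mu(i')}) \longrightarrow \mathcal F_{\mu(i)}/((D\cap N_i)N_{\mu(i)}/N_{\mu(i)})$: first further quotient the source by the image of $D\cap N_i$ to land on a fusion system on $D/(D\cap N_i)$, then use the containment above together with the stabilization $\mathcal F_{\mu(i')}/(D\cap N_i)=\mathcal F_{\mu(i)}/(D\cap N_i)$. These maps assemble into an inverse system of saturated fusion systems whose limit is a pro-saturated pro-fusion system $\mathcal F$ on $D$. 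Since such a system is determined by its restriction to open subgroups, it suffices to match $\Hom_\mathcal F(P,Q)$ with $\Hom_{\mathcal F_{(D,\hat e)}(G,b)}(P,Q)$ for all open $P,Q\leq D$.

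The forward inclusion is immediate: if $g\in G$ satisfies $(P,\hat e_P)^g\leq(Q,\hat e_Q)$, its images induce a compatible family in the quotient fusion systems. For the reverse, given a compatible family $(\varphi_i)_i$, let $T_i\subseteq G$ be the set of those $g$ whose image in $G/N_{\mu(i)}$ induces $\varphi_i$ in $\mathcal F_{\mu(i)}/(D\cap N_i)$ and satisfies $(PN_{\mu(i)}/N_{\mu(i)}, e_{P,\mu(i)})^g\leq(QN_{\mu(i)}/N_{\mu(i)}, e_{Q,\mu(i)})$. Each $T_i$ is a union of $N_{\mu(i)}$-cosets, hence closed in $G$. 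Lemma~\ref{lemma nu minus}(2), applied inductively across adjacent levels, shows that every finite intersection of the $T_i$ is non-empty: a representative of $\varphi_j$ in $T_j$ can be adjusted by an element of the appropriate $C_{\cdot, N_{\mu(j-1)}/N_{\mu(j)}}$ so as to simultaneously represent $\varphi_{j-1}$ in $T_{j-1}$. Compactness of $G$ then furnishes $g\in\bigcap_i T_i$, which realises the desired morphism in $\mathcal F_{(D,\hat e)}(G,b)$. The second assertion of the theorem follows at once from the inverse-limit description together with the stabilization defining $\mu$. The main obstacle, as flagged above, is this reverse inclusion: extracting a single $g\in G$ that both realises the compatible family $(\varphi_i)$ at each level \emph{and} respects the Brauer pair condition globally --- this is exactly what Lemma~\ref{lemma nu minus}(2) was tailored for, but weaving the $\nu^-$-lifts together across infinitely many levels via compactness requires care.
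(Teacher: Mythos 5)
Your overall architecture --- stabilise the ascending chain of quotient fusion systems coming from Lemma~\ref{lemma fusion system grows} to define $\mu$, assemble transition morphisms via Proposition~\ref{prop factor out strongly closed}, and recover a single $g\in G$ from a compatible family by a compactness argument on closed sets $T_i$ --- is the same as the paper's. The construction of $\mu$ and of the inverse system is essentially right (one small rearrangement: to see that the chain is \emph{ascending} in $j$ you should apply Lemma~\ref{lemma fusion system grows} between consecutive levels $j$ and $j+1$ and then pass to the further quotient by the image of $D\cap N_i$, rather than only comparing level $i$ with level $j$).

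The genuine gap is in the surjectivity step, precisely where you flag it. Because your $T_i$ build the condition $(PN_{\mu(i)}/N_{\mu(i)},e_{P,\mu(i)})^g\le(QN_{\mu(i)}/N_{\mu(i)},e_{Q,\mu(i)})$ into their definition, they are neither obviously nested nor do finite intersections obviously meet, and Lemma~\ref{lemma nu minus}(2) points the wrong way for fixing this: it lifts a containment of Brauer pairs at the \emph{coarse} level to one at the \emph{fine} level after adjusting by an element of $C_{P,N}$, whereas you need to descend from level $\mu(j)$ to level $\mu(j-1)$, or else to adjust a representative so that it induces the \emph{prescribed} morphisms $\varphi_{j-1}$ and $\varphi_j$ (not merely some compatible pair) at both levels at once --- and an adjustment that is invisible at level $\mu(j-1)$ can change which morphism of $\mathcal F_{\mu(j)}$ is induced at level $\mu(j)$. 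The paper sidesteps all of this with a different choice of $T_i$: after replacing each $g_i$ by $g_in$ with $n\in N_{\mu(i)}$ so that $P^{g_i}\subseteq Q$ on the nose (using that $Q$ is a Sylow $p$-subgroup of $QN_{\mu(i)}$), it sets $T_i=\{g\in G\ :\ P^g\subseteq Q \text{ and } g_ig^{-1}N_{\mu(i)}\in C_{G/N_{\mu(i)}}(PN_{\mu(i)}/N_{\mu(i)})\}$. These sets contain $g_i$, are closed, and are nested, so compactness applies immediately; and any $g$ in the intersection automatically satisfies $\hat e_P^{\,g}=\hat e_{P^g}$ because conjugation by an element of $C_{G/N_{\mu(i)}}(PN_{\mu(i)}/N_{\mu(i)})$ acts trivially on $Z(kC_{G/N_{\mu(i)}}(PN_{\mu(i)}/N_{\mu(i)}))$, so $g$ and $g_i$ move the relevant idempotents identically. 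You should either adopt this formulation of the $T_i$ or supply the missing descent argument for yours; as written, the finite-intersection claim is not justified.
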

    \begin{proof}
        Write $\mathcal F=\mathcal F_{(D,\hat e)}(G, b)$.
        Note that $(D\cap N_i)N_{\mu(i)}=DN_{\mu(i)}\cap N_i$ (this can be seen elementarily). So $\mathcal F_{\mu(i)}/((D\cap N_i)N_{\mu(i)}/N_{\mu(i)})$ is a fusion system on $DN_{\mu(i)}/(DN_{\mu(i)}\cap N_i)\cong DN_i/N_i\cong D/D\cap N_i$, and we view it as a fusion system on $DN_i/N_i$. 

        We start by constructing the inverse system. Pick $i_0$ so that for all $i\geq i_0$ we have that ${\nu^-_{DN_{i+1}/N_{i+1}}(b_{i})\cdot b_{i+1}\neq 0}$, the block $kG/N_ib_i$ has defect group $DN_i/N_i$ and $D\cap N_i$ is a Sylow $p$-subgroup of~$N_i$.
        Lemma~\ref{lemma fusion system grows} implies that
        $$\mathcal F_j/((D\cap N_i)N_{j}/N_{j})\leq \mathcal F_{j+1}/((D\cap N_i)N_{j+1}/N_{j+1})$$ 
        for all $j\geq i\geq i_0$, giving us an ascending chain of fusion systems on the finite group $DN_i/N_i$. This chain must eventually become stationary, implying that if we choose $\mu(i)$ for $i\geq i_0$ large enough then
        \begin{equation}\label{eqn upward equality fusion systems}
        \Hom_{\mathcal F_{\mu(i)}/((D\cap N_i)N_{\mu(i)}/N_{\mu(i)})} (PN_i/N_i,QN_i/N_i)=\Hom_{\mathcal F_j/((D\cap N_i)N_{j}/N_{j})} (PN_i/N_i,QN_i/N_i)
        \end{equation}
        for all $j\geq \mu(i)$ and all $D\cap N_i\leq P, Q\leq D$. Of course we can simultaneously ensure that $\mu$ is strictly increasing.

        By Proposition~\ref{prop factor out strongly closed} the natural maps induce a morphism of fusion systems
        $$\mathcal F_{\mu(j)}/((D\cap N_j)N_{\mu(j)}/N_{\mu(j)})\longrightarrow \mathcal F_{\mu(j)}/((D\cap N_i)N_{\mu(j)}/N_{\mu(j)})$$
        for any $j\geq i \geq i_0$, and by equation~\eqref{eqn upward equality fusion systems} we have $\mathcal F_{\mu(j)}/((D\cap N_i)N_{\mu(j)}/N_{\mu(j)})= \mathcal F_{\mu(i)}/((D\cap N_i)N_{\mu(i)}/N_{\mu(i)})$. So we get morphisms of fusion systems 
        $$
        \varphi_{ij}:\ \mathcal F_{\mu(j)}/((D\cap N_j)N_{\mu(j)}/N_{\mu(j)})\longrightarrow \mathcal F_{\mu(i)}/((D\cap N_i)N_{\mu(i)}/N_{\mu(i)})
        $$
        given by the natural maps (i.e. ``conjugation by $g$'' goes to ``conjugation by $g$'').

        We now know that the inverse limit on the right-hand side of equation~\eqref{eqn block fusion as inverse limit} is well-defined. We still need to show that this inverse limit equals $\Hom_{\mathcal F}(P,Q)$ for any two open subgroup $P,Q\leq D$. That is, we need to check that the natural maps
        \begin{equation}
            \varphi_{i}:\ \Hom_{\mathcal F} (P,Q) \longrightarrow \Hom_{\mathcal F_{\mu(i)}/((D\cap N_i)N_{\mu(i)}/N_{\mu(i)})} (PN_i/N_i,QN_i/N_i)
        \end{equation}
        are well-defined for $i\geq i_0$ and give rise to an isomorphism to the inverse limit.
        
        An element in the domain of $\varphi_i$ is given by conjugation by an element $g$ such that $(P,\hat e_P)^g\leq (Q,\hat e_Q)$. There is a $j\geq i_0$ such that for all $i\geq j$ we have $(PN_i/N_i, e_{P,i})^g \leq (QN_i/N_i, e_{Q,i}$). Hence conjugation by $g$ induces an element of $\Hom_{\mathcal F_{\mu(i)}/((D\cap N_i)N_{\mu(i)}/N_{\mu(i)})} (PN_i/N_i,QN_i/N_i)$ for all $i\geq j$. It will also automatically induce such an element for $i_0\leq i\leq j$ since we saw that the $\varphi_{ij}$ are well-defined. This proves that $\varphi_{i}$ is well-defined for every $i$, and therefore $\Hom_{\mathcal F} (P,Q)$ maps into the inverse limit. 
        
        An element of the inverse limit corresponds to elements $g_i\in G$, one for each $i\geq i_0$, such that $$(P (D\cap N_i)N_{\mu(i)}/N_{\mu(i)}, e_{P(D\cap N_i), \mu(i)})^{g_i}\leq (Q (D\cap N_i)N_{\mu(i)}/N_{\mu(i)}, e_{Q (D\cap N_i), \mu(i)}).$$ 
        There is some $j\geq i_0$ such that for all $i\geq j$ we have $P,Q\supseteq D\cap N_i$. So $P^{g_i}\subseteq QN_{\mu(i)}$. 
       Since $Q$ is a closed $p$-subgroup of $QN_{\mu(i)}$ it is contained in a Sylow $p$-subgroup $R$ of $QN_{\mu(i)}$. If $Q$ was properly contained in $R$, then $Q\cap N_{\mu(i)}$ would also be properly contained in $R\cap N_{\mu(i)}$, contradicting the fact that
         $Q\cap N_{\mu(i)}=D\cap N_{\mu(i)}$ is a Sylow $p$-subgroup of $N_{\mu(i)}$. So $Q=R$ is a Sylow $p$-subgroup of $QN_{\mu(i)}$. In particular there is an $n\in N_{\mu(i)}$ such that $P^{g_in}\subseteq Q$. Without loss of generality we can replace $g_i$ by $g_in$ and assume $P^{g_i}\subseteq Q$ for all $i\geq j$.
        For $i\geq j$ define $$T_i=\left\{ g\in G \ : \ P^g \subseteq Q \textrm{ and } g_ig^{-1}N_{\mu(i)} \in C_{G/N_{\mu(i)}}(PN_{\mu(i)}/N_{\mu(i)}) \right\}.$$ The $T_i$ are closed,  $T_i\supseteq T_{i+1}$ for all $i\geq j$, and we have just shown that they are non-empty. By compactness it follows that their intersection is non-empty, giving us a $g \in G$ such that $P^g\subseteq Q$, $\hat e_{P}^g=\hat e_{P^g}$, and $g$ induces the same homomorphism from $PN_i/N_i$ to $QN_i/N_i$ as $g_i$ for each $i\geq j$. It follows that $\Hom_{\mathcal F} (P,Q)$ surjects onto the inverse limit. 
        
        If two elements $g,h\in G$ induce (by conjugation) the same group homomorphism from $PN_i/N_i$ to $QN_i/N_i$ for all but finitely many $i$, then $gh^{-1}$ induces the identity on $PN_i/N_i$ for all but finitely many $i$, and therefore on $\varprojlim_i PN_i/N_i$. But $\varprojlim_i PN_i/N_i=P$ since $P$ is closed. So the map from $\Hom_{\mathcal F}(P,Q)$ into the inverse limit is injective, and therefore bijective.
   
        The last part of the claim follows since we showed above that, given any $i\geq i_0$, if $P$ and $Q$ contain $D\cap N_i$, then ${\varphi_{i}:\ \Hom_{\mathcal F} (P,Q) \longrightarrow \Hom_{\mathcal F_{\mu(i)}/((D\cap N_i)N_{\mu(i)}/N_{\mu(i)})} (PN_i/N_i,QN_i/N_i)}$ is surjective.
    \end{proof}

    From now on we can think of $\mathcal F_{(D,\hat e)}(G, b)$  as a category whose objects are the \emph{closed} subgroups of $D$, simply by identifying it with the inverse limit in Theorem~\ref{thm fusion as inverse limit}.

    \begin{corollary}
        Let $(D,\hat e)$ be a maximal $(G,b)$-Brauer pair. Then
        $\mathcal F_{(D,\hat e)}(G, b)$ is a pro-saturated pro-fusion system in the sense of Symonds and Stancu (see Definition~\ref{defi profusion system}). We will call $\mathcal F_{(D,\hat e)}(G, b)$  the \emph{block pro-fusion system} of $k\dbl G\dbr b$.
    \end{corollary}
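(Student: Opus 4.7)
The plan is essentially to invoke Theorem~\ref{thm fusion as inverse limit} and then verify that each fusion system appearing in the inverse limit on its right-hand side is a saturated fusion system on a finite $p$-group; pro-saturation is then immediate from Definition~\ref{defi profusion system} and the definition of a pro-saturated pro-fusion system.

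First I would check that $D$ itself is a pro-$p$ group written as an inverse limit of finite $p$-groups indexed by the same directed set. Since $\bigcap_i N_i = 1$, the intersection $\bigcap_i (D\cap N_i)$ is trivial, so $D = \varprojlim_{i\geq i_0} D/(D\cap N_i)$. Using the elementary identity $(D\cap N_i) N_{\mu(i)} = DN_{\mu(i)} \cap N_i$ noted in the proof of Theorem~\ref{thm fusion as inverse limit}, the quotient $DN_{\mu(i)}/(D\cap N_i)N_{\mu(i)}$ identifies canonically with the finite $p$-group $D/(D\cap N_i)$, which is the $p$-group on which $\mathcal F_{\mu(i)}/((D\cap N_i)N_{\mu(i)}/N_{\mu(i)})$ is defined.

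Next I would verify that each $\mathcal F_{\mu(i)}/((D\cap N_i)N_{\mu(i)}/N_{\mu(i)})$ is a saturated fusion system. The fusion system $\mathcal F_{\mu(i)} = \mathcal F_{(DN_{\mu(i)}/N_{\mu(i)}, e_{\mu(i)})}(G/N_{\mu(i)}, b_{\mu(i)})$ is a block fusion system of a block of a finite group, hence saturated by the classical theory. The subgroup $(D\cap N_i)N_{\mu(i)}/N_{\mu(i)}$ equals $(N_i/N_{\mu(i)}) \cap (DN_{\mu(i)}/N_{\mu(i)})$, and since $N_i/N_{\mu(i)}$ is a normal subgroup of $G/N_{\mu(i)}$, the remark immediately following the definition of a strongly closed subgroup in the text shows that this subgroup is strongly closed in $\mathcal F_{\mu(i)}$. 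Proposition~\ref{prop factor out strongly closed} then guarantees that the quotient $\mathcal F_{\mu(i)}/((D\cap N_i)N_{\mu(i)}/N_{\mu(i)})$ is again a saturated fusion system, and that the natural maps yield morphisms of fusion systems, so that the maps $\varphi_{ij}$ constructed in the proof of Theorem~\ref{thm fusion as inverse limit} are genuine morphisms of fusion systems forming an honest inverse system.

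Putting these pieces together: by Theorem~\ref{thm fusion as inverse limit}, $\mathcal F_{(D,\hat e)}(G,b)$ is (after identifying objects with closed subgroups of $D$ as explained in the paragraph preceding the corollary) the inverse limit, in the sense of Definition~\ref{defi profusion system}, of the inverse system of saturated fusion systems $\mathcal F_{\mu(i)}/((D\cap N_i)N_{\mu(i)}/N_{\mu(i)})$ on the finite $p$-groups $D/(D\cap N_i)$. This exhibits $\mathcal F_{(D,\hat e)}(G,b)$ as a pro-fusion system, and because every constituent is saturated, it is pro-saturated. There is no real obstacle in this argument: the work has been done in Theorem~\ref{thm fusion as inverse limit} and in the preparatory lemmas; the corollary is essentially a matter of matching definitions and recording that the strongly-closed subgroups used in the quotients arise from intersections with normal subgroups of the ambient finite groups.
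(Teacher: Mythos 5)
Your proposal is correct and follows exactly the route the paper intends: the corollary is an immediate consequence of Theorem~\ref{thm fusion as inverse limit} together with the facts that block fusion systems of finite groups are saturated and that quotients by strongly closed subgroups of the form $N\cap D$ remain saturated (Proposition~\ref{prop factor out strongly closed}). The paper offers no separate argument beyond this, so your write-up simply makes explicit the definition-matching that the authors leave implicit.
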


    \begin{prop}\label{prop independence of inverse system}
        Up to conjugacy, the pro-fusion system of $k\dbl G\dbr b$  does not depend on the chain of normal subgroups $(N_i)_{i\in \N}$ chosen in Notation~\ref{notation setup}. 
    \end{prop}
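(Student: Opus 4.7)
By Proposition~\ref{prop profinite brauer pairs}\ref{prop profinite brauer pairs:maximal}, once a chain of open normal subgroups is fixed, all maximal $(G,b)$-Brauer pairs are $G$-conjugate, so the pro-fusion system is determined up to conjugacy by $(G,b)$. Given two descending chains $(N_i)_{i\in\N}$ and $(M_j)_{j\in\N}$ of open normal subgroups intersecting in $1$, it therefore suffices to exhibit maximal Brauer pairs of the respective types whose pro-fusion systems coincide. Setting $L_k = N_k\cap M_k$ produces a third chain, descending with trivial intersection and satisfying $L_k\subseteq N_k$ and $L_k\subseteq M_k$; comparing each of $(N_i)$ and $(M_j)$ to $(L_k)$ separately reduces the problem to the following claim: passing from a chain $(N_i)$ to a refinement $(L_k)$ (meaning $L_k\subseteq N_k$ for all $k$) preserves the pro-fusion system up to conjugacy.

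The plan for this reduction is to transfer a chosen maximal $(G,b)$-Brauer pair $(D,\hat e_N)=(D,[(e_{N,i})_i]_\sim)$ for $(N_i)$ (Proposition~\ref{prop profinite brauer pairs}\ref{prop profinite brauer pairs:maximal}) to a compatible maximal Brauer pair for $(L_k)$. For each sufficiently large $k$ pick $i(k)\in\N$ with $L_k\subseteq N_{i(k)}$ and $i(k)\to\infty$, and write $\nu_k\colon kG/L_k\twoheadrightarrow kG/N_{i(k)}$ for the natural projection. Decomposing $1=\sum_\ell f_\ell$ into primitive orthogonal idempotents in $Z(kC_{G/L_k}(DL_k/L_k))$ and projecting, one sees there is a unique $\ell(k)$ with $\nu_k(f_{\ell(k)})\cdot e_{N,i(k)}\neq 0$; set $e_{L,k}=f_{\ell(k)}$.

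One then has to verify that $\hat e_L:=[(e_{L,k})_k]_\sim$ is a Brauer pair for $(L_k)$, i.e.\ that $\nu^{-}_{DL_{k+1}/L_{k+1}}(e_{L,k})\cdot e_{L,k+1}\neq 0$ eventually, and that it is maximal. The latter is immediate from Remark~\ref{remark blocks}, which ensures $DL_k/L_k$ is a defect group of $kG/L_k\cdot b_{L_k}$ for large $k$. The former follows by applying the commutative square~\eqref{eqn comm diagram brauer maps} from Lemma~\ref{lemma nu minus} to the nested chain of quotients $kG/L_{k+1}\twoheadrightarrow kG/L_k\twoheadrightarrow kG/N_{i(k)}$ (and similarly via $N_{i(k+1)}$), using the uniqueness characterization~\eqref{eqn defining nuQ} of $\nu^{-}_Q$ and the given compatibility of the $e_{N,i}$. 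This transitivity of the $\nu^{-}$-construction along iterated quotients is the main technical obstacle, and is essentially an iterated application of the diagram~\eqref{eqn comm diagram brauer maps}.

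Finally, by Theorem~\ref{thm fusion as inverse limit},
\[
\mathcal F_{(D,\hat e_N)}(G,b)=\varprojlim_{i} \mathcal F^N_{\mu(i)}/\bigl((D\cap N_i)N_{\mu(i)}/N_{\mu(i)}\bigr)
\]
and similarly for $\mathcal F_{(D,\hat e_L)}(G,b)$ with a function $\mu'$ and with $\mathcal F^L_k$ denoting the corresponding finite block fusion system on $G/L_k$. For each $i$ one chooses $k$ large enough that $L_{\mu'(k)}\subseteq N_{\mu(i)}$ and $L_k\subseteq N_i$; Lemma~\ref{lemma fusion system grows}, together with the compatibility $\nu_{k'}(e_{L,k'})\cdot e_{N,i(k')}\neq 0$ from the construction, shows that $\mathcal F^N_{\mu(i)}/((D\cap N_i)N_{\mu(i)}/N_{\mu(i)})$ embeds into $\mathcal F^L_{\mu'(k)}/((D\cap N_i)L_{\mu'(k)}/L_{\mu'(k)})$, and by the stabilization built into Theorem~\ref{thm fusion as inverse limit} the two become equal on $D/D\cap N_i$ for $k$ sufficiently large. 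Taking inverse limits over $i$ (and, for the right-hand side, cofinally over $k$) then yields the desired isomorphism $\mathcal F_{(D,\hat e_N)}(G,b)\cong \mathcal F_{(D,\hat e_L)}(G,b)$, completing the proof.
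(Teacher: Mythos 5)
Your reduction is genuinely different from the paper's, and the difference is exactly where the gap sits. You compare both chains to the refinement $L_k=N_k\cap M_k$, whose quotients $G/L_k$ are new groups belonging to neither original chain, so you must manufacture idempotents $e_{L,k}$ at these new levels and then verify condition~\eqref{eqn brauer pair idempots} for the chain $(L_k)$. The paper avoids this entirely by \emph{interleaving}: it chooses $\alpha,\beta$ with $N_{\alpha(1)}\geq M_{\beta(1)}\geq N_{\alpha(2)}\geq M_{\beta(2)}\geq\ldots$, so that each original chain is a cofinal subsequence of the combined chain and a Brauer pair transfers by literally forgetting terms of the sequence $(e_i)_{i}$; the identification of the fusion systems then follows from cofinality of the inverse limit in Theorem~\ref{thm fusion as inverse limit}, after enlarging $\mu$ so that it takes values in the image of the subsequence. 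No new idempotents ever need to be constructed.

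Your route leaves two real obstacles unresolved. First, the uniqueness you claim for $\ell(k)$ is false in general: for a primitive idempotent $f_\ell$ of $Z(kC_{G/L_k}(DL_k/L_k))$ the image $\nu_k(f_\ell)$ need not be central in $kC_{G/N_{i(k)}}(DN_{i(k)}/N_{i(k)})$ --- this is precisely the subtlety Lemma~\ref{lemma nu minus} is built to handle --- and uniqueness holds only after replacing the full centre by the invariant subalgebra $Z(kC_{G}(Q))^{C_{Q,N}}$, so your lift $e_{L,k}$ is determined only up to conjugacy; making coherent choices across all $k$ then requires a compactness argument of the kind used in part~3 of Proposition~\ref{prop profinite brauer pairs}, which you do not supply. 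Second, the ``transitivity of the $\nu^-$-construction along iterated quotients,'' which you yourself flag as the main technical obstacle, is proved nowhere in the paper and is not merely an iterated application of diagram~\eqref{eqn comm diagram brauer maps}: the invariance groups $C_{Q,N}$ change with $N$, and one must track how primitive idempotents of the various invariant subalgebras decompose against one another. Since this is exactly the content your argument needs, and it is what the paper's interleaving trick is designed to sidestep, the proposal as written has a genuine gap.
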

    \begin{proof}
        First note that if $\nu:\ \N \longrightarrow \N$ is a strictly increasing function, then we can turn a Brauer pair with respect to the system $(N_i)_{i\in \N}$ into a Brauer pair for the system $(N_{\nu(i)})_{i\in \N}$ by mapping $\hat e= (e_i)_{i\in \N}$ to $(e_{\nu(i)})_{i\in \N}$. The associated fusion system will be identical, not just conjugate. To see this note that in the proof of Theorem~\ref{thm fusion as inverse limit} the inverse limit $\varprojlim_{i\geq i_0} \mathcal F_{(DN_{\mu(i)}/N_{\mu(i)},e_{\mu(i)})} (G/N_{\mu(i)}, b_{\mu(i)})/((D\cap N_i)N_{\mu(i)}/N_{\mu(i)})$ does not change if we replace $\mu$ by an increasing function $\mu'$ such that $\mu'(i)\geq \mu(i)$ for all $i$. So without loss of generality, $\mu$ takes values in the image of $\nu$. But then the corresponding inverse limit for the subsystem $(N_{\nu(i)})_{i\in \N}$ is just the inverse limit indexed by a cofinal subsystem of the original one, and therefore is the same. 

        Let us now assume that we are given another system $(M_i)_{i\in \N}$. Since $\bigcap _{i\in \N} M_i= \bigcap_{i\in\N} N_{i}=1$ we can find strictly increasing functions $\alpha,\beta:\ \N \longrightarrow \N$ such that
        \begin{equation}
            N_{\alpha(1)} \geq M_{\beta(1)}\geq N_{\alpha(2)}\geq M_{\beta(2)} \geq \ldots.
        \end{equation}
        That is, we get a system $(L_i)_{i\in \N}$ such that $L_{2i-1}=N_{\alpha(i)}$ and $L_{2i}=M_{\beta(i)}$. But then the previous paragraph shows that all three systems lead to conjugate block fusion systems (here we need to conjugate since we can only produce a Brauer pair for a subsystem from a Brauer pair for a bigger system, not the other way around).
    \end{proof}

    In a sense we can think of $\mathcal F_{(D,\hat e)}(G, b)$ as the smallest pro-fusion system such that all but finitely many of the fusion systems of the finite quotients of $k\dbl G\dbr b$ are contained in the appropriate quotient of $\mathcal F_{(D,\hat e)}(G, b)$.

    \begin{prop}\label{prop fusion embedding}
        Let $(D,\hat e)$ be a maximal $(G,b)$-Brauer pair. Then there is an open normal subgroup $N_0$ of $G$ such that we have an embedding of fusion systems 
        \begin{equation}
            \mathcal F_{(DN/N, e_N)} (G/N, b_N) \hookrightarrow \mathcal F_{(D,\hat e)}(G, b)/(D\cap N)
        \end{equation}
        whenever $N$ is an open normal subgroup of $G$ contained in $N_0$, the block idempotent $b_N$ is as in Remark~\ref{remark blocks} and $(DN/N, e_N)$ is some maximal $(G/N, b_N)$-Brauer pair.
    \end{prop}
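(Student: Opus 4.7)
The approach is to apply Lemma~\ref{lemma fusion system grows} inside a suitable finite quotient of $G$, after identifying $\mathcal F_{(D,\hat e)}(G,b)/(D\cap N)$ as a quotient of a finite block fusion system by means of Theorem~\ref{thm fusion as inverse limit}.

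First I would choose $N_0$ open normal in $G$, contained in $N_{i_0}$ (with $i_0$ from Theorem~\ref{thm fusion as inverse limit}), small enough that Remark~\ref{remark blocks} applies and such that $D\cap N_0$ is a Sylow pro-$p$-subgroup of $N_0$; the last condition is achievable because $D$ is open in a Sylow pro-$p$-subgroup of $G$. A modular-law computation then shows that $D\cap N$ is a Sylow pro-$p$-subgroup of $N$ for any open normal $N\subseteq N_0$, and consequently that $(D\cap N)N_j/N_j$ is a Sylow $p$-subgroup of $N/N_j$ for every $j$ with $N_j\subseteq N$.

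Given such an $N$, pick $i\geq i_0$ with $N_i\subseteq N$. Theorem~\ref{thm fusion as inverse limit} provides
$$\mathcal F_{(D,\hat e)}(G,b)/(D\cap N_i) \;=\; \mathcal F_{\mu(i)}/((D\cap N_i)N_{\mu(i)}/N_{\mu(i)})$$
as fusion systems on $D/(D\cap N_i)$. Since $(D\cap N)/(D\cap N_i)$ is strongly closed on both sides (coming from the normal subgroup $N$ of $G$), quotienting further by it and using transitivity of fusion system quotients yields
$$\mathcal F_{(D,\hat e)}(G,b)/(D\cap N) \;=\; \mathcal F_{\mu(i)}/((D\cap N)N_{\mu(i)}/N_{\mu(i)})$$
as fusion systems on $D/(D\cap N)\cong DN/N$. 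I would then apply Lemma~\ref{lemma fusion system grows} to $\bar G := G/N_{\mu(i)}$, $\bar N := N/N_{\mu(i)}$, $\bar D := DN_{\mu(i)}/N_{\mu(i)}$, with block idempotents $b_{\mu(i)}$ and $b_N$: the Sylow hypothesis $\bar N\cap \bar D = (D\cap N)N_{\mu(i)}/N_{\mu(i)} \in \mathrm{Syl}_p(\bar N)$ holds by the preparatory step, while the defect-group hypotheses follow from Remark~\ref{remark blocks}. The lemma produces an embedding
$$\mathcal F_{(DN/N, e_N)}(G/N, b_N) \;\leq\; \mathcal F_{(\bar D, \tilde e)}(\bar G, b_{\mu(i)})/((D\cap N)N_{\mu(i)}/N_{\mu(i)})$$
for some $\tilde e$ with $\nu^-_{\bar D}(e_N)\cdot \tilde e\neq 0$. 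Since all maximal $(\bar G, b_{\mu(i)})$-Brauer pairs are $\bar G$-conjugate, $\mathcal F_{(\bar D, \tilde e)}(\bar G, b_{\mu(i)})$ coincides with $\mathcal F_{\mu(i)}$ up to conjugacy, and the resulting quotients are independent of the choice of $\tilde e$ by part (2) of Lemma~\ref{lemma fusion system grows}. Composing with the identification above yields the desired embedding.

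The main obstacle is careful bookkeeping: ensuring the Sylow conditions hold uniformly by choosing $N_0$ sufficiently small, verifying that quotienting by iterated strongly closed subgroups corresponds to a single quotient by the larger one (the fusion-system analogue of $G/N/M\cong G/M$), and matching up Brauer pair idempotents across the three layers $k\dbl G\dbr b$, $kG/N_{\mu(i)}\, b_{\mu(i)}$, and $kG/N\, b_N$. Once this is set up, the proposition reduces to a single invocation of Lemma~\ref{lemma fusion system grows}.
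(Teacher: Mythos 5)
Your argument is correct and rests on the same two pillars as the paper's proof --- Theorem~\ref{thm fusion as inverse limit} and Lemma~\ref{lemma fusion system grows} --- but it takes a different route through them. The paper's proof is very short: it invokes Proposition~\ref{prop independence of inverse system} to absorb the arbitrary open normal subgroup $N$ into the chain $(N_i)$, so that one may assume $N=N_i$ and $e_N$ conjugate to $e_i$; the desired containment $\mathcal F_{(DN_i/N_i,e_i)}(G/N_i,b_i)\leq \mathcal F_{\mu(i)}/((D\cap N_i)N_{\mu(i)}/N_{\mu(i)})$ is then already on record from the construction of $\mu$ in the proof of Theorem~\ref{thm fusion as inverse limit}. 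You instead keep $N$ outside the chain, choose $N_i\subseteq N$, pass from $\mathcal F_{(D,\hat e)}(G,b)/(D\cap N_i)$ to $\mathcal F_{(D,\hat e)}(G,b)/(D\cap N)$ via an iterated-quotient identity, and then apply Lemma~\ref{lemma fusion system grows} afresh to the normal subgroup $N/N_{\mu(i)}$ of $G/N_{\mu(i)}$. This costs you two extra verifications the paper does not need: that $(\mathcal F/S)/(T/S)=\mathcal F/T$ for nested strongly closed subgroups coming from normal subgroups of $G$ (true, and routine from the definition of quotient Hom-sets as images), and that the idempotent $\tilde e$ produced by the lemma can be reconciled with $e_{\mu(i)}$ --- which requires conjugating $e_N$ and using part (2) of the lemma, exactly the ``up to conjugacy'' care the paper also exercises. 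In exchange your argument is independent of Proposition~\ref{prop independence of inverse system} and is more self-contained at the level of this proposition; the paper's version is shorter because it recycles work already done inside the proof of Theorem~\ref{thm fusion as inverse limit}. Both are sound.
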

    \begin{proof}
        We pick our $N_0=N_{i_0}$ with $i_0$ as in the proof of Theorem~\ref{thm fusion as inverse limit}. In light of Proposition~\ref{prop independence of inverse system} we can assume that an $N$ as in the assertion is equal to $N_i$ for some $i\geq i_0$, and $e_i$ is conjugate to $e_N$. Now the claim follows from Theorem~\ref{thm fusion as inverse limit}, since $$\mathcal F_{(D,\hat e)}(G, b)/(D\cap N_i)=\mathcal F_{(DN_{\mu(i)}/N_{\mu(i)},e_{\mu(i)})} (G/N_{\mu(i)}, b_{\mu(i)})/((D\cap N_i)N_{\mu(i)}/N_{\mu(i)}),$$ and $\mathcal F_{(DN_i/N_i, e_i)}(G/N_i, b_i)$ is a subsystem of the right-hand side, as was seen in the part of the proof of Theorem~\ref{thm fusion as inverse limit} where $\mu$ was constructed.
      \end{proof}

      \section{Nilpotent blocks}

    One of the strongest applications of fusion systems in the block theory of finite groups is Puig's theory of nilpotent blocks. Recall that a block of a finite group with defect group $D$ is called \emph{nilpotent} if the associated fusion system is trivial in the sense that it is equal to $\mathcal F_D(D)$. One can define $\mathcal F_D(D)$ for a  pro-$p$ group $D$ in the same way as for finite $p$-groups \cite{StancuSymonds}.

    \begin{defi}
        Let $G$ be a countably based profinite group, let $b\in Z(k\dbl G\dbr )$ be a block idempotent and let $(D,\hat e)$ denote a maximal $(G,b)$-Brauer pair. We call the block $k\dbl G\dbr b$ \emph{nilpotent} if $\mathcal F_{(D,\hat e)}(G,b)=\mathcal F_D(D)$.
    \end{defi}

    Note that while we are asking for $G$ to be countably based, by Corollary~\ref{corol can take G countably based} this really should be seen as a restriction on the defect group $D$ rather than as a restriction on $G$.

    \begin{thm}\label{thm puig in body}
        Let $G$ be a countably based profinite group and let $k\dbl G\dbr b$ be a nilpotent block with defect group $D$. If $D$ is topologically finitely generated, then $k\dbl G\dbr b$ is Morita equivalent to $k\dbl D\dbr $.
    \end{thm}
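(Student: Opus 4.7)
The plan is to reduce to Puig's classical nilpotent block theorem applied to the finite-dimensional quotients of $k\db{G}b$, and then to recognize the inverse limit of the resulting basic algebras as $k\db{D}$ using Proposition~\ref{prop unique limit}.

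Fix a maximal $(G,b)$-Brauer pair $(D,\hat e)$ and let $N_0$ be an open normal subgroup of $G$ as in Remark~\ref{remark blocks} and Proposition~\ref{prop fusion embedding}. For any open normal $N\leq N_0$, Proposition~\ref{prop fusion embedding} provides an embedding of fusion systems
$$
\mathcal F_{(DN/N,e_N)}(G/N,b_N) \hookrightarrow \mathcal F_{(D,\hat e)}(G,b)/(D\cap N).
$$
Because $k\db{G}b$ is nilpotent, the right-hand side is $\mathcal F_D(D)/(D\cap N)$, which is the trivial fusion system on $D/(D\cap N)\cong DN/N$. Hence $k[G/N]b_N$ is a nilpotent block of the finite group $G/N$ with defect group $DN/N$, and by Puig's classical theorem it is Morita equivalent to $k[DN/N]\cong k[D/(D\cap N)]$. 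Since $DN/N$ is a finite $p$-group, $k[DN/N]$ is local and hence basic, so it is itself the basic algebra of $k[G/N]b_N$. In particular $k[G/N]b_N$ has a unique simple module, and consequently so does $k\db{G}b$.

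Proposition~\ref{prop inverse limit basic} now applies: the basic algebra $A$ of $k\db{G}b$ is the inverse limit of a surjective inverse system
$$
A \cong \invlim_{N} A_N, \qquad A_N\cong k[D/(D\cap N)].
$$
Since $D$ is topologically finitely generated, the Frattini quotient $D/\Phi(D)$ is finite and $k\db{D}\cong k\db{Q}/I$, where $Q$ is the finite quiver with one vertex and $d=\dim_k D/\Phi(D)$ loops and $I$ is a suitable relation ideal. Each $A_N\cong k[D/(D\cap N)]$ is a finite-dimensional continuous quotient of $k\db{D}$, hence of $k\db{Q}$ via some relation ideal $I_N\supseteq I$. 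The intersections $D\cap N$, as $N$ ranges over open normal subgroups of $G$ contained in $N_0$, form a cofinal family of open normal subgroups of $D$, and therefore the $I_N$ form a descending chain with $\bigcap_N I_N=I$.

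The transition maps in the inverse system $\{A_N\}$ produced by Proposition~\ref{prop inverse limit basic} are not under our control, because Puig's finite theorem only identifies $A_N$ with $k[D/(D\cap N)]$ up to non-canonical isomorphism. This is exactly the situation Proposition~\ref{prop unique limit} is designed for: it guarantees that for any surjective inverse system of quotients $k\db{Q}/I_N$ with $I_N$ a descending chain of relation ideals, the inverse limit is $k\db{Q}/\bigcap_N I_N$, independently of the transition maps. Applying this yields $A \cong k\db{Q}/I \cong k\db{D}$, and therefore $k\db{G}b$ is Morita equivalent to $k\db{D}$. The main obstacle is precisely this non-canonicity of the Morita equivalences supplied by the finite-group Puig theorem, which is what forces us to invoke Proposition~\ref{prop unique limit} rather than attempt a direct identification of the inverse systems.
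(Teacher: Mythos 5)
Your proposal is correct and follows essentially the same route as the paper's proof: reduce to the finite quotients via Proposition~\ref{prop fusion embedding}, apply Puig's finite theorem to identify the basic algebras $A_{N}$ with $k[DN/N]$, and then use Proposition~\ref{prop unique limit} on a presentation of $k\db{D}$ as a quotient of the completed path algebra of a bouquet of loops to pin down the inverse limit despite the uncontrolled transition maps. Your explicit remarks on the unique simple module (needed for Proposition~\ref{prop inverse limit basic}) and on the non-canonicity of the finite Morita equivalences are exactly the points the paper relies on.
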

    \begin{proof}
        By Proposition~\ref{prop fusion embedding}, $k\dbl G\dbr b$ can be written as an inverse limit of nilpotent blocks of finite groups, with defect groups $DN_i/N_i$, where the $N_i$ are the open normal subgroups of $G$ from Notation~\ref{notation setup} and $i\geq i_0$ for some $i_0\in\N$. By Proposition~\ref{prop inverse limit basic}, the block $k\dbl G\dbr b$ is then Morita equivalent to the inverse limit $A=\varprojlim_{i\geq i_0} A_{N_i}$ of a surjective inverse system, where each $A_{N_i}$ is the basic algebra of a nilpotent block with defect group $DN_i/N_i$. By Puig's structure theory (see \cite[Theorem 8.11.5]{LinckelmannVolII}) the algebra $A_{N_i}$ is isomorphic to $kDN_i/N_i$. Since $D$ is topologically finitely generated, the quotient $D/\Phi(D)$, where $\Phi(D)=D^p\cdot [D,D]$ is the Frattini subgroup, is finite. Note that $\dim A_{N_i}/J^2(A_{N_i}) \leq 1 + |D/\Phi(D)|$ for any $N_i$, which implies $\dim A/J^2(A)\leq 1 + |D/\Phi(D)|<\infty$. As mentioned at the beginning of
        Section~\ref{section completed path algebras} this means that we can write $A$ as a quotient of the completed path algebra of a finite quiver.
        To do this, pick $g_1,\ldots, g_n\in D$ such that the images of $1-g_i$ generate $D/\Phi(D)$, let $Q$ be a bouquet of $n$ loops, and let $\varphi:\ k\dbl Q\dbr  \longrightarrow k\dbl D\dbr $ denote the map sending the loops to $1-g_1,\ldots, 1-g_n$. Then $\varphi$ is surjective, since its composition with the natural epimorphism $\nu_{N_i}:\ k\dbl D\dbr \twoheadrightarrow kDN_i/N_i$ is surjective for any $i\geq i_0$ due to the fact that the images of the $1-g_j$ span $J(kDN_i/N_i)/J^2(kDN_i/N_i)$.  If we define $I_i=\Ker(\nu_{N_i}\circ\varphi)$ then clearly $k\dbl Q\dbr /\bigcap_i I_i \cong k\dbl D\dbr $ and $k\dbl Q\dbr /I_i \cong kDN_i/N_i \cong A_{N_i}$. By Proposition~\ref{prop unique limit} it follows that $\varprojlim_{i\geq i_0} A_{N_i}\cong k\dbl D\dbr $. 
    \end{proof}

    \section{Blocks of dihedral defect}

    It is a well-known consequence of the theory of nilpotent blocks that all blocks with defect group $C_{2^n}$ are Morita equivalent to $kC_{2^n}$ -- simply because $C_{2^n}$ does not allow any non-trivial fusion systems to be defined on it. In the profinite case, we get a similar result for blocks of infinite dihedral defect $D_{2^\infty}$.

    \begin{prop}
        Let $\mathcal F$ be a pro-saturated pro-fusion system on $D_{2^\infty}$. Then $\mathcal F = \mathcal F_{D_{2^\infty}}(D_{2^\infty})$.
    \end{prop}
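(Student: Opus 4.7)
The plan is to realize $\mathcal F$ as an inverse limit of saturated fusion systems on the finite dihedral quotients $D_{2^n}$ of $D_{2^\infty}$, and to exploit the fact that $\Aut(D_{2^n})$ is itself a $2$-group for $n\geq 2$ in order to eliminate every potential source of non-inner fusion.

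By pro-saturation, $\mathcal F \cong \varprojlim_j \mathcal F_j$ with each $\mathcal F_j$ a saturated fusion system on a finite $2$-group $P_j$, and $\varprojlim_j P_j = D_{2^\infty}$. Replacing each $P_j$ by the image of the canonical map $D_{2^\infty} \longrightarrow P_j$ and noting that every non-trivial finite quotient of $D_{2^\infty}$ is either $C_2$ or $D_{2^m}$ for some $m\geq 1$, we may, after passing to a cofinal subsystem, assume the inverse system is $\{\mathcal F_n\}_{n\geq n_0}$ with $P_n = D_{2^n}$ and transition morphisms induced by the natural projections $D_{2^{n+1}} \twoheadrightarrow D_{2^n}$.

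For each $n\geq 2$, $\Aut(D_{2^n})$ is a $2$-group, so the saturation axioms force $\Aut_{\mathcal F_n}(D_{2^n}) = \textnormal{Inn}(D_{2^n})$. By Alperin's fusion theorem, the only potential source of non-inner morphisms in $\mathcal F_n$ is an essential Klein four subgroup $V \leq D_{2^n}$ with $\Aut_{\mathcal F_n}(V) = S_3$, the non-inner contributions being order-$3$ automorphisms of $V$ cyclically permuting its three involutions. We claim such an order-$3$ element cannot appear as the $n$-th component of a thread in $\varprojlim_n \mathcal F_n$. Indeed, the preimage $\tilde V \leq D_{2^{n+1}}$ of $V$ under $D_{2^{n+1}} \twoheadrightarrow D_{2^n}$ is the dihedral group of order $8$, so any morphism in $\mathcal F_{n+1}$ inducing the putative order-$3$ automorphism on $V$ would lie in $\Aut_{\mathcal F_{n+1}}(\tilde V) \leq \Aut(\tilde V)$; but $\Aut(\tilde V)$ has order $8$, and its natural image in $\Aut(V) = S_3$ is contained in the Sylow $2$-subgroup $C_2$, which has no element of order~$3$.

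Consequently, every thread in the inverse system consists of inner morphisms at each finite level. Using compactness of $D_{2^\infty}$, such a compatible family of inner conjugations lifts to a single conjugation by an element of $D_{2^\infty}$, so every morphism in $\mathcal F$ between open subgroups is induced by conjugation in $D_{2^\infty}$. Since pro-saturated pro-fusion systems are determined by their restriction to open subgroups, we conclude $\mathcal F = \mathcal F_{D_{2^\infty}}(D_{2^\infty})$. The main technical point will be carefully justifying the cofinality reduction to the canonical dihedral system (ensuring the required compatibility between the original and refined inverse systems); the finite-level argument then reduces to a direct application of saturation and Alperin's theorem.
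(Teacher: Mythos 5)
Your overall strategy---reduce to the canonical system of finite dihedral quotients and then exploit the fact that the preimage of a Klein four subgroup is dihedral of order at least $8$, whose automorphism group is a $2$-group---is precisely the key observation in the paper's proof, and your finite-level computation with $\Aut(\tilde V)$ is correct. The genuine gap is the step ``Consequently, every thread in the inverse system consists of inner morphisms at each finite level.'' What you have actually shown is that an order-$3$ automorphism of a Klein four subgroup of $D_{2^n}$ cannot occur as the $n$-th component of a thread of \emph{automorphisms}. But a saturated fusion system on a finite dihedral $2$-group which is not equal to $\mathcal F_{D_{2^n}}(D_{2^n})$ also contains non-inner morphisms between subgroups that are not Klein four groups: most importantly it fuses reflections lying in the two distinct $D_{2^n}$-conjugacy classes, so that, for example, $\Hom_{\mathcal F_n}(\langle b\rangle, \langle ab\rangle)$ is a non-empty one-element set. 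If every $\mathcal F_n$ were of $PGL_2(q)$-type, these singletons would be compatible under the transition maps, and their inverse limit (a non-empty inverse limit of non-empty finite sets) would yield a morphism of $\mathcal F$ between two closed order-$2$ subgroups of $D_{2^\infty}$ that are not conjugate in $D_{2^\infty}$. Your argument does not exclude this: such a thread never passes through an automorphism of a subgroup with Klein-four image, and although Alperin's theorem says these morphisms are \emph{generated} inside each $\mathcal F_n$ by the order-$3$ automorphisms, generation does not commute with taking inverse limits of $\Hom$-sets.

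To close the gap you must show that the finite-level fusion systems are themselves eventually trivial, not merely that certain automorphisms fail to lift. The paper does this by first invoking \cite[Lemma 4.2]{StancuSymonds} to replace the given system by a \emph{surjective} one with the same limit (this is also the lemma your cofinality reduction needs); then $\mathcal F_j$ is a quotient of $\mathcal F_i$ for $i>j$, so $\Aut_{\mathcal F_j}(V)$ is the \emph{image} of $\Aut_{\mathcal F_i}(W)$, where $W$ is the dihedral preimage of the Klein four group $V$, hence a $2$-group; Alperin's fusion theorem then gives $\mathcal F_j=\mathcal F_{D_{2^m}}(D_{2^m})$ outright, and triviality of the limit is immediate. (Alternatively, one can note that the mere existence of a morphism of fusion systems $\mathcal F_{n+1}\longrightarrow\mathcal F_n$ covering the natural projection forces its kernel $\langle a^{2^n}\rangle$ to be strongly closed in $\mathcal F_{n+1}$, which already rules out order-$3$ automorphisms of the Klein four subgroups of $D_{2^{n+1}}$, since these move the central involution out of the kernel. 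Some such statement about the $\mathcal F_n$ themselves, rather than about threads, is indispensable.)
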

    \begin{proof}
        By definition, $\mathcal F=\varprojlim_{i\in I} \mathcal F_i$ for saturated fusion systems $\mathcal F_i$ on finite quotients of $D_{2^\infty}$. Here $I$ denotes some directed indexing set. By \cite[Lemma 4.2]{StancuSymonds} we can assume that the inverse system is surjective. We can write $D_{2^\infty}=\overline{\langle a,b \ : \ b^2, \ baba \rangle}$, where the bar denotes the pro-$2$ completion.
         Note that all normal subgroups of $D_{2^\infty}$ of index greater than two are of the form $\overline{\langle a^{2^n} \rangle}$ for $n\geq 1$, and therefore leave quotient $D_{2^{n+1}}$. Hence we can find, for any $n_0\in \N$, elements $j<i \in I$ such that $\mathcal F_i$ is a fusion system on $D_{2^n}$, $\mathcal F_j$ is a fusion system on $D_{2^m}$ for $n > m \geq n_0$, and the map $D_{2^n} \twoheadrightarrow D_{2^m}$ is (without loss of generality) the natural epimorphism. 

        We will show that $\mathcal F_j=\mathcal F_{D_{2^m}}(D_{2^m})$ provided $n_0\geq 3$. Since this holds for all $j$ except those where $\mathcal F_j$ is defined on a group of order $\leq 4$, it will follow that $\mathcal F$ is trivial.
        By Alperin's Fusion Theorem it follows \cite[Corollary 8.2.9]{LinckelmannVolII} that $\mathcal F_j$ is trivial if and only if $\Aut_{\mathcal F_j}(P)$ is a $2$-group for all subgroups $P\leq D_{2^m}$. But all subgroups of $D_{2^m}$ are either cyclic or dihedral. A cyclic group of $2$-power order has an automorphism group of $2$-power order, as does a dihedral group of $2$-power order $\geq 8$. The only subgroups of $D_{2^m}$ for which $\Aut_{\mathcal F_j}(P)$ might not be a $2$-group are the Klein four subgroups of $D_{2^m}$, of which there are two conjugacy classes, represented by $V_{m,1}=\langle a^{2^{m-1}}, b\rangle$ and $V_{m,2}=\langle a^{2^{m-1}}, ab\rangle$. But their preimages $W_{m,1}$ and $W_{m,2}$ in $D_{2^n}$ are dihedral groups of order $2^{n-m+2}$, whose automorphism groups are $2$-groups. Hence, for $s\in \{1,2\}$, the image of $\Aut_{\mathcal F_i}(W_{m,s})$ in $\Aut_{\mathcal F_j}(V_{m,s})$ is a $2$-group, which by our surjectivity assumption on the inverse system implies that  $\Aut_{\mathcal F_j}(V_{m,s})$ is a $2$-group. Hence $\mathcal F_j$ is trivial for all $j$ sufficiently large, and therefore so is $\mathcal F$.
    \end{proof}

    The above proposition combined with Theorem~\ref{thm puig in body} immediately implies the corollary below, which classifies the blocks with defect group $D_{2^\infty}$ up to Morita equivalence. Note that we do not need to ask for $G$ to be countably based due to Corollary~\ref{corol can take G countably based}. 

    \begin{corollary}\label{Corollary Dinfty block is kDinfty in body}
        Let $G$ be a profinite group and let $k\dbl G\dbr b$ be a block with defect group isomorphic to $D_{2^\infty}$. Then $k\dbl G\dbr b$ is Morita equivalent to $k\dbl D_{2^\infty}\dbr $.
    \end{corollary}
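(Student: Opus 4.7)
The plan is to assemble the corollary directly from three ingredients already in place in the paper, with essentially no new work required. First, I would appeal to Corollary~\ref{corol can take G countably based} to reduce to the case that $G$ itself is countably based. The hypothesis of that corollary is satisfied: $D_{2^\infty}$ is countably based by its very definition as the inverse limit of the countable tower of finite dihedral $2$-groups $D_{2^n}$, so the hypothesis on the defect group holds, and replacing $G$ by a countably based quotient preserves the block (and hence its Morita equivalence class) together with its defect group.

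Second, having reduced to a countably based $G$, I can attach to $k\dbl G\dbr b$ its block pro-fusion system $\mathcal F_{(D_{2^\infty},\hat e)}(G,b)$ constructed in Section~3, which is automatically pro-saturated by the corollary to Theorem~\ref{thm fusion as inverse limit}. The proposition immediately preceding the corollary asserts that every pro-saturated pro-fusion system on $D_{2^\infty}$ equals $\mathcal F_{D_{2^\infty}}(D_{2^\infty})$, so $k\dbl G\dbr b$ is nilpotent in the sense defined just before Theorem~\ref{thm puig in body}.

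Third, I would verify that $D_{2^\infty}$ is topologically finitely generated — indeed, it is topologically $2$-generated by the elements $a$ and $b$ appearing in the presentations of the finite dihedral groups at the start of Section~2.1 — so Theorem~\ref{thm puig in body} applies and yields $k\dbl G\dbr b$ Morita equivalent to $k\dbl D_{2^\infty}\dbr$, which is the required conclusion. The ``main obstacle'' is really not an obstacle at this stage: the substantive work (the triviality of pro-fusion systems on $D_{2^\infty}$, and the profinite analogue of Puig's nilpotent block theorem) has already been carried out in the preceding results, so the corollary is a one-line assembly once those are in hand.
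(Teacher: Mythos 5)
Your proposal is correct and follows exactly the route the paper takes: reduce to countably based $G$ via Corollary~\ref{corol can take G countably based}, invoke the preceding proposition to conclude that every pro-saturated pro-fusion system on $D_{2^\infty}$ is trivial (so the block is nilpotent), and apply Theorem~\ref{thm puig in body} using the fact that $D_{2^\infty}$ is topologically $2$-generated. No gaps.
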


\section{Inverse limits of tame blocks}\label{section limits of tame blocks}

We have shown using group theoretic methods that there is only one Morita equivalence class of block with defect group $D_{2^{\infty}}$, and it appears that this classification cannot be obtained using purely algebra-theoretic methods and the corresponding classification of finite blocks, as was done with the infinite cyclic defect group.  However, the class of algebras that are inverse limits of blocks with finite dihedral defect group is remarkably small, and the algebras obtained are very simple.  We present the classification without proof.

\begin{prop}
            Let $k$ be an algebraically closed field of characteristic $2$.  Let $B$ be the inverse limit of an inverse system of blocks $B_{n}$, where $B_n$ is a block of a finite group with finite dihedral defect group.  Then $B$ is Morita equivalent to the bounded completed path algebra $k\db{Q}/I$, where 
            either:
$$Q = \begin{tikzcd}
	\bullet \arrow[out=405,in=-15,loop,"a"]
	\arrow[out=140,in=-160,swap,loop,"b"]
\end{tikzcd}\,\hbox{ and }\,I = \langle a^2, b^2\rangle;$$
$$Q = \begin{tikzcd}
			\bullet \arrow[r,bend left,"b_1"] 
			\arrow[out=140,in=-160,swap,loop,"a"]& \bullet \arrow[l,bend left,"b_2"] 
		\end{tikzcd}\,\hbox{ and }\,I = \langle b_1b_2, a^2\rangle;$$
$$Q = \begin{tikzcd}
			\bullet \arrow[r,bend left,"b_{2}"] 
			& \bullet \arrow[l,bend left,"b_{1}"] \arrow[r,bend right,swap,"a_{1}"] & \bullet \arrow[l,bend right,swap,"a_{2}"] 
		\end{tikzcd}\,\hbox{ and }\,I = \langle a_1a_2, b_1b_2\rangle.$$
\end{prop}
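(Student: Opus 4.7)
The strategy is to reduce to basic algebras, apply Erdmann's classification of tame blocks with finite dihedral defect groups, and then identify the inverse limit via Proposition~\ref{prop unique limit}.

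First, by Proposition~\ref{prop inverse limit basic} we may replace each $B_n$ by its basic algebra $A_n$ and realise $B$ (up to Morita equivalence) as the inverse limit of a surjective inverse system of basic algebras. A surjective algebra homomorphism between finite-dimensional basic algebras induces a surjection on sets of isomorphism classes of simples, so the number of simple $A_n$-modules is weakly decreasing in $n$ and must stabilise; after passing to a cofinal subsystem we may assume it is constant, equal to $1$, $2$, or $3$, these being the only values arising for tame blocks with dihedral defect.

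Next, by Erdmann's classification of blocks with finite dihedral defect groups (as summarised, for instance, in \cite{LinckelmannVolII}) the basic algebra $A_n$ is isomorphic to $k\db{Q}/J_n$, where $Q$ is precisely the quiver appearing in the corresponding case of the statement and $J_n$ is an admissible ideal generated by the ``short'' relations $R$ listed in the statement together with additional ``long'' relations involving paths whose lengths grow with the order $2^{m_n}$ of the defect group of $B_n$. Since the inverse limit $B$ is infinite-dimensional and each $A_n$ is finite-dimensional, $m_n\to\infty$ along a cofinal subsequence; on such a subsystem the long relations lie in $J^{s}(k\db{Q})$ for arbitrarily large $s$.

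Let $I=\langle R\rangle$ be the closed ideal of $k\db{Q}$ generated by the short relations. Since $R\subseteq J_n$ for every $n$, we have $I\subseteq \bigcap_n J_n$, and Proposition~\ref{prop unique limit} identifies $B$ (Morita equivalently) with $k\db{Q}/\bigcap_n J_n$. For any fixed $s$, we have $J_n + J^s(k\db{Q}) = I + J^s(k\db{Q})$ once $n$ is large enough that all long generators of $J_n$ lie in $J^s(k\db{Q})$. Intersecting over $s$ then yields $\bigcap_n J_n = I$, which gives the three algebras in the statement.

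The main obstacle is that Erdmann's classification actually supplies one-parameter families of algebras in the one- and two-simple cases, indexed by a scalar $c\in k$ appearing as a coefficient in certain relations. One must check that the scalar-dependent terms live in high enough radical powers to be killed in the limit, i.e.\ that the ``socle-adjusted'' relations $a^2 - c(ab)^{2^{m_n-1}}$ etc.\ project to $a^2$ modulo any fixed $J^s(k\db{Q})$ for $n\gg 0$. This is a direct but case-by-case verification using the explicit presentations in Erdmann's list; once it is carried out, the three Morita equivalence classes collapse to the three algebras displayed.
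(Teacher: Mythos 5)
The paper states this proposition explicitly ``without proof'', so there is no argument of record to compare against; your strategy --- Erdmann's classification of dihedral blocks fed into Proposition~\ref{prop unique limit}, in the spirit of \cite{JRcyclicdef} --- is clearly the one the authors intend. The write-up nevertheless has a genuine gap at the point where you invoke Proposition~\ref{prop unique limit}. That proposition requires the ideals to form a descending chain $J_1\supseteq J_2\supseteq\cdots$ inside $k\db{Q}$, and the ideals supplied by Erdmann's standard presentations of the individual $A_n$ need not be nested: if $J_{n+1}$ contains a socle-adjusted relation $\alpha^2-c\,w_{n+1}$ with $c\neq 0$ and $w_{n+1}$ a long path, its image in $A_n=k\db{Q}/J_n$ is the socle element $c_n w_n$ (with $w_{n+1}$ mapping to $0$ because it is too long), which is nonzero when $c_n\neq 0$, so $J_{n+1}\not\subseteq J_n$; for the same reason your assertion $R\subseteq J_n$ fails in the scalar cases. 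One can force nestedness by lifting a presentation of $A_n$ along $A_{n+1}\twoheadrightarrow A_n$ (possible once the quiver has stabilised, since the kernels then lie in the radical), but the resulting ideals differ from Erdmann's by automorphisms of $k\db{Q}$, which need not preserve $I+J^s$, so your computation of the intersection no longer applies verbatim. The clean repair is to apply Proposition~\ref{prop unique limit} one level up: for each fixed $s$ the surjective system $A_n/J^s(A_n)\cong k\db{Q}/(J_n+J^s)$ of quotients of the finite-dimensional algebra $k\db{Q}/J^s$ stabilises, and your key computation $J_n+J^s=I+J^s$ for all sufficiently large $n$ (long relations and scalar terms falling into $J^s$) identifies the stable value with $k\db{Q}/(I+J^s)$. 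Hence $A\cong\invlim_s A/J^s(A)\cong\invlim_s k\db{Q}/(I+J^s)$, and Proposition~\ref{prop unique limit} now applies to the manifestly nested chain of relation ideals $I+J^s$ to give $k\db{Q}/I$.

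Three further points need attention. First, Proposition~\ref{prop inverse limit basic} is proved only for the system of coinvariant quotients $B_N$ of a block of a profinite group, whereas here you have an abstract inverse system of blocks of finite groups; passing to basic algebras therefore needs a separate (routine, once the number of simples has stabilised and the kernels are nilpotent) argument, and you should record that the system is taken to be surjective with infinite-dimensional limit, without which the statement is false (a constant system already gives a counterexample). Second, and more seriously, in the three-simple case Erdmann's lists allow three quivers for algebras of dihedral type ($D(3\mathcal{A})$, $D(3\mathcal{B})$ and $D(3\mathcal{K})$), only the first of which appears in the statement; your claim that the quiver of $A_n$ is ``precisely the quiver appearing in the corresponding case'' therefore conceals a nontrivial input, namely that only the $D(3\mathcal{A})$ quiver occurs for blocks with dihedral defect group and three simple modules (or an argument excluding the other two from occurring cofinally in such a system). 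Finally, the number of simple modules is weakly \emph{increasing}, not decreasing, along the surjections $A_{n+1}\twoheadrightarrow A_n$; since it is bounded above by $3$ the conclusion that it stabilises is unaffected.
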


\section{Alternative definitions of Brauer pairs and open questions}\label{section Brauer pair naive}

Our treatment of Brauer pairs is rather delicate, for the following reason.  One would of course like to study Brauer pairs in terms of Brauer pairs of finite quotient groups of $G$.  But if $N$ is an open normal subgroup of $G$, the natural projection $G\to G/N$ induces a surjective map $C_G(Q)\to C_G(Q)N/N$, whereas the finite theory applies to the potentially larger subgroup $C_{G/N}(QN/N)$.  One has a map
$C_G(Q) \onto C_G(Q)N/N\hookrightarrow C_{G/N}(QN/N)$, but it need not be surjective, and thus one must be careful when restricting to centres. As a consequence, Definition~\ref{defi brauer pair profinite} does not match the naive generalization of the definition of Brauer pairs for finite groups.

\begin{question}\label{question naive brauer pairs}
    Is it possible to construct block pro-fusion systems by defining Brauer pairs for profinite groups~$G$ as pairs $(P,e)$, where $P$ is a closed $p$-subgroup of $G$ and $e$ is a primitive idempotent in $Z(k\db{C_G(P)})$, with the relation ``$\leq$'' defined using the Brauer homomorphism for profinite groups?
\end{question}

It is not at all clear whether Brauer pairs defined in this way have the necessary properties to define a category on the defect group of a block, and, assuming they do, whether that category would turn out to be a pro-fusion system. Furthermore, there is currently no axiomatic characterization of pro-fusion systems, so an analogue of Theorem~\ref{thm fusion as inverse limit} would still be required. Nevertheless, a positive answer to Question~\ref{question naive brauer pairs} could help answer the following obvious question:

\begin{question}
    Is it possible to extend the definiton of block pro-fusion systems to blocks whose defect groups are not countably based?
\end{question}

A positive answer would likely require further results on block fusion systems for finite groups along the lines of Lemmas~\ref{lemma nu minus}~and~\ref{lemma fusion system grows}. But it is not even clear if we should expect the answer to be affirmative.

To finish let us prove one proposition which indicates that Question~\ref{question naive brauer pairs} is reasonable.

\begin{prop}
Let $G$ be countably based and let $P$ be an open subgroup of a Sylow $p$-subgroup of $G$. Then there is a bijection
  \begin{equation}
      \{ \textrm{ elements $\hat e$ as in Definition~\ref{defi brauer pair profinite} } \} \longleftrightarrow \{ \textrm{ primitive idempotents in $Z(k\db{C_G(P)})$ }\}.
  \end{equation} 
\end{prop}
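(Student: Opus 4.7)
The plan is to identify both sides with equivalence classes of compatible systems of primitive idempotents at the finite levels $G/N_i$, and then match them up using the machinery of Lemma~\ref{lemma nu minus}.

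First, since $C_G(P)$ is a closed subgroup of the countably based profinite group $G$, we have $C_G(P) = \varprojlim_i C_G(P)N_i/N_i$ and hence the completed group algebra decomposes as the pseudocompact inverse limit $k\db{C_G(P)} = \varprojlim_i k[C_G(P)N_i/N_i]$. Taking centres commutes with surjective inverse limits of finite-dimensional algebras, so $Z(k\db{C_G(P)}) = \varprojlim_i Z(k[C_G(P)N_i/N_i])$. It follows that a primitive idempotent in $Z(k\db{C_G(P)})$ is precisely the data of a compatible family $(\tilde e_i)_{i \geq i_0}$ of primitive idempotents $\tilde e_i \in Z(k[C_G(P)N_i/N_i])$, modulo agreement for all but finitely many $i$.

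The key technical step will then be to set up, for each $i$ sufficiently large that $N_i \cap P$ is a Sylow $p$-subgroup of $N_i$, a canonical bijection between primitive idempotents of $Z(k[C_G(P)N_i/N_i])$ and primitive idempotents of $Z(kC_{G/N_i}(PN_i/N_i))$. The input here is a short computation, using the Sylow hypothesis on $N_i \cap P$, showing that $C_G(P)N_i/N_i$ is normal in $C_{G/N_i}(PN_i/N_i) = C_{P,N_i}N_i/N_i$ (since $C_{P,N_i}$ normalises $C_G(P)$ modulo $N_i$), together with a version of the argument of Lemma~\ref{lemma nu minus} identifying primitive central idempotents across the inclusion: concretely, $\tilde e_i \leftrightarrow e_i$ when $\tilde e_i \cdot e_i \neq 0$ inside $kC_{G/N_i}(PN_i/N_i)$, and the Sylow condition is what forces the associated conjugation orbits to be singletons.

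With the level-wise bijection in hand, the transition conditions on both sides must be matched. On the $C_G(P)$-side these are the natural projections $Z(k[C_G(P)N_{i+1}/N_{i+1}]) \twoheadrightarrow Z(k[C_G(P)N_i/N_i])$ sending $\tilde e_{i+1}$ to $\tilde e_i$; on the $\hat e$-side they are the $\nu^-$-compatibility conditions $\nu^-_{PN_{i+1}/N_{i+1}}(e_i) \cdot e_{i+1} \neq 0$ in Definition~\ref{defi brauer pair profinite}. The characterising property $\nu(\nu^-_Q(e)) \cdot e = e$ from Lemma~\ref{lemma nu minus} directly translates one into the other. Finally, the equivalence relations on both sides are the same, namely agreement for all but finitely many $i$, so the bijection between compatible families descends to the asserted bijection between primitive idempotents and elements $\hat e$.

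The main obstacle will be the level-wise bijection of the previous paragraph; establishing it cleanly requires teasing out precisely what the Sylow hypothesis on $N_i \cap P$ buys us about the relationship between the centralisers $C_G(P)N_i/N_i$ and $C_{G/N_i}(PN_i/N_i)$ and their centres, which is exactly the circle of ideas already used in the proof of Lemma~\ref{lemma nu minus}.
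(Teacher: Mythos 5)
Your reduction of the right-hand side to compatible families $(f_i)$ with $f_i \in Z(kC_G(P)N_i/N_i)$ is fine and matches the paper. The gap is the ``level-wise bijection'' between primitive idempotents of $Z(kC_G(P)N_i/N_i)$ and of $Z(kC_{G/N_i}(PN_i/N_i))$ at the \emph{same} level $i$. The only structure available there is the inclusion $C_G(P)N_i/N_i \unlhd C_{G/N_i}(PN_i/N_i)$, so what you are really invoking is block covering theory for a normal subgroup, and that does not give a bijection: a block of the larger group determines only a conjugacy orbit of blocks of the normal subgroup, several blocks upstairs can cover the same orbit, and a stable block downstairs is merely a \emph{sum} of blocks upstairs. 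Moreover the Sylow hypothesis on $P\cap N_i$ does not make the relevant quotient $C_{P,N_i}/\bigl(C_G(P)(C_{P,N_i}\cap N_i)\bigr)$ a $p$-group or a $p'$-group in general, so ``the conjugation orbits are singletons'' is not something you can extract from it, and even if the orbits were singletons that alone would not produce a bijection.

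What the paper actually does is shift indices: after passing to a cofinal subsystem one arranges that $C_{G/N_{i+1}}(PN_{i+1}/N_{i+1})$ \emph{surjects} onto $C_G(P)N_i/N_i$ (possible because $C_G(P)=\varprojlim_i C_{G/N_i}(PN_i/N_i)$), and then observes that the kernel $\bar K_i$ of this surjection lies in $N_i/N_{i+1}$ and centralises the Sylow $p$-subgroup $(N_i\cap P)N_{i+1}/N_{i+1}$ of $N_i/N_{i+1}$. Hence $\bar K_i$ is an extension of a $p'$-group by a \emph{central} $p$-subgroup, and the two quotient steps are controlled by \cite[Theorems 8.11 and 8.8]{NagaoTsushima}. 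Even then the correspondence is not a naive level-by-level bijection --- the $p'$-quotient can send block idempotents to zero --- and the bijection only emerges for compatible families, after a careful matching of the two transition conditions that tracks invariance under $C_{PN_{i+1}/N_{i+1},\,N_i/N_{i+1}}$ and uses the characterisation of $\nu^-$ from Lemma~\ref{lemma nu minus}. Your last paragraph correctly flags where the difficulty sits, but the proposed same-level mechanism does not resolve it; the index shift and the structure of $\bar K_i$ are the missing ideas.
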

\begin{proof}
  We use Notation~\ref{notation setup}. Using Proposition~\ref{prop independence of inverse system} we can replace the $N_i$ by a cofinal subsystem such that the following hold for all $i\in\N$:
  \begin{enumerate}
      \item $N_i\cap P$ is a Sylow $p$-subgroup of $N_i$, and
      \item $C_{G/N_{i+1}}(PN_{i+1}/N_{i+1}) \twoheadrightarrow C_G(P)N_i/N_i$.
  \end{enumerate}
  The second condition is satisfiable since $C_G(P)=\varprojlim_{i\in \N} C_{G/N_{i}}(PN_{i}/N_{i})$. With these definitions we get a diagram of group algebras
  \begin{equation}\label{eqn diagramme}
      \xymatrix{
          \cdots \ar[r]& kC_{G/N_{i+1}}(PN_{i+1}/N_{i+1})\ar@{->}[r]^{\nu_i} \ar@{->>}[d]^{\varphi_i} & kC_{G/N_{i}}(PN_{i}/N_{i}) \ar@{->>}[d]^{\varphi_{i-1}} \ar[r] & \cdots\\
          \cdots\ar@{->>}[r] & kC_G(P)N_i/N_i \ar@{->>}[r]^{\nu_i} \ar@{^{(}->}[ur]^{\iota_i} &  kC_G(P)N_{i-1}/N_{i-1} \ar@{->>}[r] &\cdots
      }
  \end{equation}
  where every triangle commutes and the maps are the natural ones. Let $\bar K_i$ be the kernel of the group homomorphism $C_{G/N_{i+1}}(PN_{i+1}/N_{i+1}) \twoheadrightarrow C_G(P)N_i/N_i$. We have $\bar K_i\leq N_{i}/N_{i+1}$. 
  Since $N_i\cap P$ is a Sylow $p$-subgroup of $N_i$, the group $(N_i\cap P)N_{i+1}/N_{i+1}$ is a Sylow $p$-subgroup of $N_i/N_{i+1}$. Since $\bar K_i$ centralizes  $(N_i\cap P)N_{i+1}/N_{i+1}$, any Sylow $p$-subgroup $\bar Q_i$ of $\bar K_i$ is contained in $(N_i\cap P)N_{i+1}/N_{i+1}$ (as otherwise the product of the two $p$-groups is a bigger $p$-subgroup of $N_i/N_{i+1}$). In particular $\bar Q_i$ is central and therefore normal in $C_{G/N_{i+1}}(PN_{i+1}/N_{i+1})$. Hence we have epimorphisms
  \begin{equation}
      kC_{G/N_{i+1}}(PN_{i+1}/N_{i+1}) \twoheadrightarrow  kC_{G/N_{i+1}}(PN_{i+1}/N_{i+1})/\bar Q_i \twoheadrightarrow kC_{G/N_{i+1}}(PN_{i+1}/N_{i+1})/\bar K_i\cong kC_G(P)N_i/N_i.
  \end{equation}
  The first epimorphism corresponds to a quotient by a central $p$-subgroup, and therefore induces a bijection of block idempotents by \cite[Theorem~8.11]{NagaoTsushima}. The second epimorphism may send some block idempotents to zero, but since it corresponds to a quotient by a $p'$-group it will induce a bijection on those block idempotents that it does not send to zero by \cite[Theorem~8.8]{NagaoTsushima}. We conclude that the vertical maps $\varphi_i$ in diagram~\eqref{eqn diagramme} send block idempotents either to block idempotents or to zero.

  Recall from Lemma~\ref{lemma nu minus} that if we set $C=C_{PN_{i+1}/N_{i+1}, N_i/N_{i+1}}$ then $\nu_i$ restricts to a map $$Z(kC_{G/N_{i+1}}(PN_{i+1}/N_{i+1}))^{C}\longrightarrow Z(kC_{G/N_{i}}(PN_{i}/N_{i})).$$ By definition, $CN_i/N_i\leq C_{G/N_{i}}(PN_{i}/N_{i})$, and therefore $C\leq C_G(P)N_{i-1}/N_{i+1}$ by well-definedness of $\varphi_{i-1}$. A primitive idempotent $e$ in $Z(kC_{G/N_{i+1}}(PN_{i+1}/N_{i+1}))^{C}$ is therefore a sum of primitive idempotents $e_1,\ldots, e_r$ ($r\in \N$) in $Z(kC_{G/N_{i+1}}(PN_{i+1}/N_{i+1}))$ conjugate by elements of $N_{i-1}$. The elements $\varphi_i(e_1),\ldots, \varphi_i(e_r)$ will also be conjugate by elements of $N_{i-1}$, which implies that $\nu_i(\varphi_i(e_1))=\ldots=\nu_i(\varphi_i(e_r))$. By orthogonality of the $e_j$, this implies that either $r=1$ or $\nu_i(\varphi_i(e_1))=\ldots=\nu_i(\varphi_i(e_r))=0$. In the first case $e$ is actually a block idempotent itself and, in the notation of Lemma~\ref{lemma nu minus}, $e=\nu_{PN_i/N_i}^-(f)$ for every block idempotent $f$ in $Z(kC_{G/N_{i}}(PN_{i}/N_{i}))$ with $\nu_i(e)f\neq 0$. In the second case we have $\varphi_{i-1}(\nu_i(e))=0$, and therefore $\varphi_{i-1}(f)=0$ for any block idempotent $f$ in $Z(kC_{G/N_{i}}(PN_{i}/N_{i}))$ with  $e=\nu_{PN_i/N_i}^-(f)$.

  Now take a primitive idempotent $f\in Z(k\db{C_G(P)})$. Such an $f$ corresponds to the equivalence class of a family $(f_i)_{i\geq i_0}$ (where $i_0\in\N$), where $f_i\in Z(kC_G(P)N_i/N_i)$ is a primitive idempotent and $\nu_i(f_i)f_{i-1}\neq 0$ for all $i\geq i_0$ (see \cite[Remark~4.4]{FranquizMacQuarrieBrauer}). By the previous two paragraphs there are unique primitive idempotents $e_i\in Z(kC_{G/N_{i+1}}(PN_{i+1}/N_{i+1}))$ such that $\varphi_i(e_i)=f_i$. Note that for $i\geq i_0+1$ the idempotent $e_{i-1}$ is $C$-invariant with $C$ as above, and therefore so is $f_{i-1}$. The idempotent $f_i$ is uniquely characterised by the condition $\nu_i(f_i)f_{i-1}\neq 0$, so $f_i$ is $C$-invariant as well, and therefore so is $e_i$. 
  By the commutativity of diagram~\eqref{eqn diagramme} we have $\nu_i(e_i)e_{i-1}\neq 0$, which means $e_i=\nu_{PN_i/N_i}^-(e_{i-1})$. In particular $\hat e = (e_i)_{i\in \N}$ satisfies the conditions of Definition~\ref{defi brauer pair profinite} (technically we have shown something stronger, but we had to thin out the system of normal subgroups $N_i$ for this).

  Now take $\hat e = (e_i)_{i\in \N}$ as in Definition~\ref{defi brauer pair profinite}.
  Then $e_i\nu_{PN_i/N_i}^-(e_{i-1})\neq 0$ for all $i \geq i_0$, where $i_0$ is chosen sufficiently large. 
  This means that $\nu_{PN_i/N_i}^-(e_{i-1})$ is the $C$-orbit sum of $e_i$. In particular $\nu_i(e_i)e_{i-1}\neq 0$ for all $i\geq i_0$, since by $C$-equivariance of $\nu_i$ having $\nu_i(e_i)e_{i-1}= 0$ would imply $\nu_i(\nu_{PN_i/N_i}^-(e_{i-1})) e_{i-1}=0$, which directly contradicts the definition of $\nu_{PN_i/N_i}^-(e_{i-1})$.
  Now set $f_i=\varphi_i(e_i)$ and consider the family $(f_i)_{i\geq i_0}$. The fact that $\nu_i(e_i)e_{i-1}\neq 0$ implies $\nu_i(e_i)=\iota_i(\varphi_i(e_i))\neq 0 $, so $\varphi_i(e_i)=f_i\neq 0$. It follows that all $f_i$ for $i\geq i_0$ are block idempotents, as required.
  It remains to show that $\nu_i(f_i)f_{i-1}\neq 0$ for all $i$ sufficiently large, say $i\geq i_0+1$. Define $f_i'$ to be the unique block idempotent of $kC_G(P)N_i/N_i$ with $\nu_i(f'_i)f_{i-1}\neq 0$, and let $e_i'$ denote the unique block idempotent of $ kC_{G/N_{i+1}}(PN_{i+1}/N_{i+1})$ with $\varphi_i(e_i')=f_i'$. Clearly $\nu_i(e_i')e_{i-1}\neq 0$. Note that $e_{i-1}$ is $C$-invariant, so by uniqueness the same is true for $f_{i-1}$, $f_i'$ and $e_i'$, that is, $e_i'\in  Z(kC_{G/N_{i+1}}(PN_{i+1}/N_{i+1}))^C$. It follows that $e_i'=\nu_{PN_i/N_i}^-(e_{i-1})$. So $e_i=e_i'$ and therefore ${\nu_i(f_i)f_{i-1}\neq 0}$.
\end{proof}

\bibliographystyle{abbrv}
\bibliography{refpap}

\begin{thebibliography}{10}

\bibitem{AlperinBroue}
J.~Alperin and M.~Brou\'e.
\newblock Local methods in block theory.
\newblock {\em Ann. of Math. (2)}, 110(1):143--157, 1979.

\bibitem{Borceux}
F.~Borceux.
\newblock {\em Handbook of categorical algebra: volume 1, Basic category
  theory}.
\newblock Cambridge University Press, 1994.

\bibitem{Bru}
A.~Brumer.
\newblock Pseudocompact algebras, profinite groups and class formations.
\newblock {\em Journal of Algebra}, 4(3):442--470, 1966.

\bibitem{CravenFusionSystems}
D.~A. Craven.
\newblock {\em The Theory of Fusion Systems: An Algebraic Approach}.
\newblock Cambridge Studies in Advanced Mathematics. Cambridge University
  Press, 2011.

\bibitem{FranquizMacQuarrieBrauer}
R.~J. Franquiz~Flores and J.~W. MacQuarrie.
\newblock Block theory and {B}rauer's first main theorem for profinite groups.
\newblock {\em Adv. Math.}, 397:Paper No. 108121, 31, 2022.

\bibitem{JRcyclicdef}
R.~J. Franquiz~Flores and J.~W. MacQuarrie.
\newblock Blocks of profinite groups with cyclic defect group.
\newblock {\em Bulletin of the London Mathematical Society}, 54(5):1595--1608,
  2022.

\bibitem{G}
P.~Gabriel.
\newblock Des cat\'egories ab\'eliennes.
\newblock {\em Bulletin de la Soci\'et\'e Math\'ematique de France},
  90:323--448, 1962.

\bibitem{IusenkoMacQuarrieSemisimple}
K.~Iusenko and J.~W. MacQuarrie.
\newblock Semisimplicity and separability for pseudocompact algebras.
\newblock {\em Journal of Algebra and Its Applications}, 0(0):2550078, 0.

\bibitem{JK}
K.~Iusenko and J.~W. MacQuarrie.
\newblock The path algebra as a left adjoint functor.
\newblock {\em Algebras and Representation Theory}, 2017.

\bibitem{Kra}
H.~Krause and D.~Vossieck.
\newblock Length categories of infinite height.
\newblock In {\em Geometric and Topological Aspects of the Representation
  Theory of Finite Groups: PIMS Summer School and Workshop, July 27-August 5,
  2016}, pages 213--234. Springer, 2018.

\bibitem{LinckelmannFusionSystems}
M.~Linckelmann.
\newblock Introduction to fusion systems.
\newblock In {\em Group representation theory}, pages 79--113. EPFL Press,
  Lausanne, 2007.

\bibitem{LinckelmannVolII}
M.~Linckelmann.
\newblock {\em The block theory of finite group algebras. {V}ol. {II}},
  volume~92 of {\em London Mathematical Society Student Texts}.
\newblock Cambridge University Press, Cambridge, 2018.

\bibitem{MacQSymInPrep}
J.~W. MacQuarrie and P.~Symonds.
\newblock Profinite orbit sums and finiteness properties of blocks of profinite
  groups.
\newblock in preparation.

\bibitem{PPJ}
J.~W. MacQuarrie, P.~Symonds, and P.~A. Zalesskii.
\newblock Infinitely generated pseudocompact modules for finite groups and
  {W}eiss theorem.
\newblock {\em Advances in Mathematics}, 361:106925, 2020.

\bibitem{NagaoTsushima}
H.~Nagao and Y.~Tsushima.
\newblock {\em Representations of finite groups}.
\newblock Academic Press, Inc., Boston, MA, 1989.

\bibitem{PuigFrobeniusCategories}
L.~Puig.
\newblock Frobenius categories.
\newblock {\em J. Algebra}, 303(1):309--357, 2006.

\bibitem{simson2001}
D.~Simson.
\newblock Coalgebras, comodules, pseudocompact algebras and tame comodule type.
\newblock In {\em Colloquium Mathematicum}, volume~1, pages 101--150, 2001.

\bibitem{StancuSymonds}
R.~Stancu and P.~Symonds.
\newblock Fusion systems for profinite groups.
\newblock {\em J. Lond. Math. Soc. (2)}, 89(2):461--481, 2014.

\end{thebibliography}
\end{document}